\newcommand{\bl}{\begin{lemma}}
\newcommand{\el}{\end{lemma}}
\def\beaa{\begin{eqnarray*}}
\def\eeaa{\end{eqnarray*}}
\def\ba{\begin{array}}
\def\ea{\end{array}}
\def\be#1{\begin{equation} \label{#1}}
\def \eeq{\end{equation}}
\def\a{{\alpha}}
\def\b{{\beta}}
\def\be{{\beta}}
\def\ga{\gamma}
\def\de{\delta}
\def\la{\lambda}
\def\La{\Lambda}
\def\Om{\Omega}
\def\th{\theta}
\def\Th{\Theta}
\def\nab{\nabla}
\def\varep{\varepsilon}
\def\pr{{\partial}}
\def\CC{{\mathcal C}}
\def\HH{{\mathcal H}}
\def\LL{{\mathcal L}}
\def\WW{{\mathcal W}}
\def\SS{{\mathcal S}}
\def\UU{{\mathcal U}}
\def\HHH{{\mathbb{H}}}
\def\UU{{\mathcal{U}}}
\def\RRR{{\mathbb R}}
\def\f12{{\frac 1 2}}
\DeclareMathOperator{\Div}{\mathrm{div}}
\def\half{\frac{1}{2}}
\newcommand{\Qdd}{{{Q \mkern-12mu / \mkern+5mu }}}
\newcommand{\RRRic}{\mathrm{Ric}}
\newcommand{\Rscal}{\mathrm{R}_{scal}}
\newcommand{\ol}{\overline} 
\newcommand{\Divd}{\Div \mkern-17mu /\ }
\newcommand{\Nd}{\nabla \mkern-13mu /\ }
\newcommand{\Ld}{\triangle \mkern-12mu /\ }
\newcommand{\gd}{{g \mkern-8mu /\ \mkern-5mu }}
\newcommand{\prd}{{\pr \mkern-10mu /\ \mkern-5mu }}
\def\ni{\noindent}
\def\tr{\mathrm{tr}}
\def\th{\theta}
\def\f{\widetilde{f}}
\newcommand{\Lied}{\mathcal{L} \mkern-9mu/\ \mkern-7mu}
\newcommand{\ubp}{\underline{B}^+}
\newcommand{\usp}{\underline{S}^+}
\newtheorem{theorem}{Theorem}[section]
\newtheorem{lemma}[theorem]{Lemma}
\newtheorem{proposition}[theorem]{Proposition}
\newtheorem{corollary}[theorem]{Corollary}
\newtheorem{definition}[theorem]{Definition}
\newtheorem{remark}[theorem]{Remark}
\newtheorem{claim}[theorem]{Claim}
\numberwithin{equation}{section}
\begin{document}

\title[Boundary harmonic coordinates]{Boundary harmonic coordinates on manifolds\\  with boundary in low regularity}

\author{Stefan Czimek}

\address{Department of Computer and Mathematical Sciences, University of Toronto at Scarborough, Toronto, Canada}

\begin{abstract} In this paper, we prove the existence of $H^2$-regular coordinates on Riemannian $3$-manifolds with boundary, assuming only $L^2$-bounds on the Ricci curvature, $L^4$-bounds on the second fundamental form of the boundary, and a positive lower bound on the volume radius.

The proof follows by extending the theory of Cheeger-Gromov convergence to include manifolds with boundary in the above low regularity setting. The main tools are boundary harmonic coordinates together with elliptic estimates and a geometric trace estimate, and a rigidity argument using manifold doubling. Assuming higher regularity of the Ricci curvature, we also prove corresponding higher regularity estimates for the coordinates.
\end{abstract}

\maketitle
\tableofcontents


\section{Introduction} \label{sec:introduction}

\ni This paper is concerned with the following question.\\

\emph{Consider a compact smooth Riemannian $3$-manifold $(M,g)$ with boundary. Under which assumptions does there exist around each point in $M$ a local coordinate system $(x^1,x^2,x^3)$ of uniform size in which the metric components $g_{ij}$ are uniformly bounded in $H^2$?} \\

The following is a first version of our main theorem, answering the above question. The precise version is stated in Section \ref{ExactStatement}.
\begin{theorem}[Existence of regular coordinates, version 1] \label{thm:higherintroexharm111}
Let $(M,g)$ be a compact complete\footnote{A smooth Riemannian manifold with boundary is called complete if it is complete as a metric space. } Riemannian $3$-manifold with boundary such that 
\begin{align*}
\RRRic \in L^2(M), \, \Th \in L^4(\pr M), \, r_{vol}(M,1) > 0,
\end{align*}
where $\Th$ denotes the second fundamental form of $\pr M \subset M$ and $r_{vol}(M,1)$ is the volume radius at scale $1$ of $(M,g)$\footnote{\noindent Let $(M,g)$ be a Riemannian $3$-manifold. The volume radius at scale $r$ is defined as
\begin{align*}
r_{vol}(M,r) := \inf\limits_{p \in M}  \inf\limits_{r'< r} \frac{\mathrm{vol}_g \left( B_g(p,r') \right)}{\frac{4\pi}{3} (r')^{3}},
\end{align*}
where $B_g(p,r')$ denotes the geodesic ball of radius $r'$ centered at $p$.} 
Then the following holds.
\begin{enumerate} 
\item {\bf $L^2$-regularity.} There is $\varep_0>0$ such that for every $0< \varep< \varep_0$, there is a radius
\begin{align*}
r=r( \Vert \RRRic \Vert_{ L^2(M) },\Vert \Th \Vert_{ L^4(\pr M)},r_{vol}(M,1), \varep) >0
\end{align*}
such that around every point $p \in M$ on a geodesic ball of radius $r$, there are coordinates $(x^1,x^2,x^3)$ in which
\begin{align*}
(1-\varep) e_{ij} \leq g_{ij} &\leq (1+\varep) e_{ij},
\end{align*}
where $e_{ij}$ denotes the Euclidean metric, and further
\begin{align*}
 \max\limits_{i,j=1,2,3} \Big( r^{-1/2} \Vert \pr g_{ij} \Vert_{L^2} + r^{1/2}\Vert \pr^2 g_{ij} \Vert_{L^2} \Big)<\varep.
\end{align*}
\item {\bf Higher regularity.} Let $m\geq1$ be an integer. Assuming higher regularity of $\RRRic$, we have further the higher regularity estimate
\begin{align*}
\sum\limits_{i,j=1,2,3} \Vert g_{ij} \Vert_{ H^{m+2}} \leq C_{r} \sum\limits_{i=0}^m \Vert \nab^{(i)} \RRRic \Vert_{L^2(M)}  + C_{r,m} \varep.
\end{align*}
Here, as in the rest of the paper, the notation is that a constant $C_{q_1, \dots, q_k}>0$ depends on the quantities $q_1, \dots, q_k$.
\end{enumerate}
\end{theorem}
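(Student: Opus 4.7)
The plan is a contradiction/compactness argument in the spirit of Anderson's harmonic coordinate theory for $L^p$ Ricci bounds, adapted to manifolds with boundary. Suppose part~(1) fails: there is a sequence of manifolds $(M_k,g_k)$ satisfying the hypotheses uniformly, points $p_k \in M_k$, and scales $r_k \to 0$ on which no chart has the desired $H^2$-smallness. Rescale by $r_k^{-2}g_k$ so the bad scale becomes unit size. In dimension $3$, the hypotheses $\RRRic \in L^2(M)$ and $\Th \in L^4(\pr M)$ are scale-invariant (respectively subcritical), so they remain uniformly bounded under the rescaling, and the volume radius lower bound is preserved. The goal is to extract a Cheeger--Gromov-type limit of the rescaled manifolds and show that it must be isometric to a flat ball or half-ball, contradicting the assumed failure.

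The main analytic construction is a system of \emph{boundary harmonic coordinates} $(x^1,x^2,x^3)$ near $p_k$: take $x^3$ to be the $g$-harmonic function with Dirichlet datum $0$ on $\pr M$ and suitable normalization, while $x^1,x^2$ are harmonic extensions into $M$ of coordinates $\ol{x}^1,\ol{x}^2$ on $\pr M$ obtained by solving $\Ld_{\gd}\, \ol{x}^A = 0$ on the induced boundary metric. In such a chart the Ricci tensor yields the familiar quasi-linear elliptic equation
\begin{align*}
-\tfrac{1}{2} g^{ab} \pr_a \pr_b g_{ij} + Q(g,\pr g) = \RRRic_{ij},
\end{align*}
with boundary data involving $\Th$. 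Standard $H^2$ elliptic theory for mixed Dirichlet/Neumann systems then bounds $\Vert \pr^2 g \Vert_{L^2}$ by $\Vert \RRRic \Vert_{L^2(M)}$, a boundary contribution of $\Th$, and lower-order products absorbed via Sobolev embedding in $3$D once $g$ is close enough to Euclidean. The boundary term is precisely where the \emph{geometric trace estimate} is needed: it must relate $\Vert \Th \Vert_{L^4(\pr M)}$ to controllable interior $H^2$ norms of $g$ via the harmonicity of the chart.

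With these a priori bounds in hand, for $k$ large one produces a boundary harmonic chart on the unit ball with $g_{ij}$ uniformly bounded in $H^2$, uniformly close to $e_{ij}$ in $C^0$, and with the harmonicity map's inverse controlled by the volume radius lower bound. Rellich compactness extracts a subsequential limit $(M_\infty,g_\infty)$ with $g_\infty \in H^2 \cap C^{0,\a}$. Since $\Vert \RRRic_{g_k}\Vert_{L^2}$ and $\Vert \Th_{g_k}\Vert_{L^4}$ tend to zero after rescaling, the limit is weakly Ricci-flat with totally geodesic boundary. To upgrade this to flatness I would invoke the \emph{doubling} argument: reflect $(M_\infty,g_\infty)$ across $\pr M_\infty$; the totally geodesic condition guarantees that in boundary harmonic (Fermi-type) coordinates the Christoffel symbols extend without distributional jump, so the doubled metric remains $H^2$ and is weakly Ricci-flat on a closed $3$-manifold. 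In dimension $3$ this forces flatness, and combined with the volume radius bound the limit must be isometric to a flat half-ball, the desired contradiction.

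Part~(2) follows by a bootstrap inside the boundary harmonic chart already produced in part~(1): differentiating the elliptic system for $g_{ij}$ and applying $H^{m+2}$ boundary elliptic regularity iteratively yields the stated bound, with all boundary data at each step controlled by already-established lower-order quantities and higher derivatives of $\RRRic$. I expect the main obstacle to be the boundary analysis at the minimal regularity level: constructing the boundary harmonic chart so that the resulting mixed boundary value problem is well-posed with only $L^4$ control on $\Th$, proving the geometric trace estimate that closes the $H^2$ bound, and verifying that the doubled metric in the rigidity step is truly of class $H^2$ across the reflection seam so that a weak-Ricci-flat rigidity actually applies.
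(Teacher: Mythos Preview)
Your overall architecture matches the paper's: contradiction plus blow-up, boundary harmonic coordinates, elliptic estimates with a geometric trace input, and a doubling-based rigidity. Two points, however, are not merely imprecise but would block the argument as you have written it.

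\textbf{The role of the trace estimate.} You describe the trace estimate as something that ``must relate $\Vert \Th \Vert_{L^4(\pr M)}$ to controllable interior $H^2$ norms of $g$.'' This is not what it does, and getting this backwards would leave you unable to close the boundary estimate. The actual sequence is: the Gauss equation gives $2K = (\tr\Th)^2 - |\Th|^2 + \Rscal - 2\RRRic(N,N)$ on $\pr M$, so the $\Th$ contribution to $K$ is already controlled in $L^2 \hookrightarrow H^{-1/2}$ by the assumed $L^4$ bound on $\Th$. The genuine trace difficulty is the term $\Rscal - 2\RRRic(N,N)$, which a priori is only the restriction of an $L^2$ function and hence not meaningfully defined on $\pr M$. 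The geometric trace estimate uses the twice contracted Bianchi identity to write $N(\Rscal - 2\RRRic_{NN})$ as a tangential divergence of $\RRRic_{\cdot N}$ plus lower-order terms; integrating this against $\langle\prd\rangle^{-1}$ of itself yields $\Vert \Rscal - 2\RRRic_{NN}\Vert_{H^{-1/2}(\pr M)} \lesssim \Vert \RRRic\Vert_{L^2(M)}$. With $K \in H^{-1/2}$ in hand, one then runs the elliptic problems in a specific order: first the two-dimensional interior equation $\Ld_\gd \gd_{AB} = -2K\gd_{AB} - 2\Qdd_{AB}$ gives $\gd_{AB} \in H^{3/2}$; this becomes Dirichlet data for $g_{AB}$; finally $g^{33}$ and $g^{3A}$ are controlled via Neumann problems whose boundary data $N(g^{33}) = 2\tr\Th\, g^{33}$ and $N(g^{3A}) = \tr\Th\, g^{3A} - \tfrac{1}{2}(g^{33})^{-1/2} g^{Am}\pr_m g^{33}$ are explicit consequences of boundary harmonicity. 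This is not a single ``mixed Dirichlet/Neumann system'' but a specific cascade, and the trace estimate sits at its root.

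\textbf{The rigidity step.} Your plan is to double the $H^2$ limit, verify the doubled metric is $H^2$ across the seam, and then invoke ``weak-Ricci-flat rigidity.'' No such rigidity statement is available off the shelf at $H^2$ regularity; even the statement that Ricci-flat implies flat in dimension three presupposes a smooth metric. The paper inverts the order: since the limit satisfies $\RRRic=0$, $K=0$, $\Th=0$, one first bootstraps the elliptic system in boundary harmonic coordinates to conclude that the limit metric is $C^\infty$ and flat. Only then is the double constructed, in Gaussian normal coordinates off $\pr M$, and one checks directly (using $\Th=0$ and flatness) that all normal derivatives $\pr_{z^3}^m g_{AB}$ vanish on $\{z^3=0\}$, so the reflected metric is genuinely smooth. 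The doubled manifold is then a smooth, complete, flat $3$-manifold without boundary with Euclidean volume growth, hence isometric to $\RRR^3$; since $\Th=0$, the image of $\pr M$ is a plane and the limit is the half-space $(\HHH^+,e)$, not a half-ball. (A minor related point: $\Vert \RRRic\Vert_{L^2}$ is not scale-invariant in dimension three; the critical exponent is $3/2$, and it is precisely the subcriticality of $L^2$ that forces $\RRRic \to 0$ under blow-up, which you use later.)
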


The proof of Theorem \ref{thm:higherintroexharm111} is based on the so-called Cheeger-Gromov theory of manifold convergence. In the literature, the Cheeger-Gromov convergence theory and existence results for regular coordinates are readily available for 
\begin{itemize}
\item manifolds without boundary in low regularity, see for example \cite{Petersen},
\item manifolds with boundary in higher regularity, see for example \cite{Kodani}, \cite{Anderson2}, \cite{Anderson} and \cite{Perales}. These results assume pointwise bounds on $\RRRic$, while in Theorem \ref{thm:higherintroexharm111} we only assume $L^2$-bounds on $\RRRic$.
\end{itemize}

Our main motivation to prove Theorem \ref{thm:higherintroexharm111} comes from applications to the Cauchy problem of general relativity, see for example the \emph{localised bounded $L^2$-curvature theorem} in \cite{Cz22}. 

The rest of this introduction is organised as follows. In Section \ref{secCheegerGromovhistory}, we introduce the Cheeger-Gromov theory of manifold convergence and give a short historic overview of its development. In Section \ref{discussionbdryharm}, we discuss so-called \emph{boundary harmonic coordinates} which are essential for our study of the Cheeger-Gromov theory for manifolds with boundary in low regularity.


\subsection{The Cheeger-Gromov theory of manifold convergence} \label{secCheegerGromovhistory} 

The Cheeger-Gromov theory of manifold convergence aims to answer the following questions. \\

\emph{Let $ ( (M_i,g_i))_{i\geq1}$ be a sequence of Riemannian manifolds. What does it mean for this sequence to converge to a metric space? Given uniform quantitative bounds on each $(M_i,g_i)$, can one deduce the convergence of a subsequence? What are the weakest necessary uniform bounds to derive a (pre-)compactness result?} \\

Convergence results for sequences $((M_i,g_i))_{i\geq1}$ of Riemannian manifolds are first proved in \cite{CheegerFirst} and \cite{GromovFirst}, see also \cite{CGref2} and \cite{CGref1}, under the assumption of pointwise bounds on the sectional curvature, and are usually refered to as the \emph{Cheeger-Gromov theory of manifold convergence}.

An important development in the context of this paper are \cite{CGrefAnderson2} and \cite{CGref14} which obtain pre-compactness results for sequences of Riemannian $n$-manifolds under the assumption of pointwise bounds on the Ricci tensor, $L^{n/2}$-bounds on the Riemann curvature tensor and that $r_{vol}(M_i,1)>0$. Subsequently, pre-compactness results under the assumption of only uniform $L^{p}$-bounds on the Riemann curvature tensor for $p>n/2$ and a uniform positive lower bound on the volume radius are derived in \cite{Anderson1993} and \cite{Petersen}, see also \cite{Yang}. 

While the above results are for manifolds without boundary, convergence results for sequences of Riemannian manifolds with boundary are studied for example in \cite{Kodani}, \cite{Anderson2}, \cite{Anderson}, \cite{Perales333} and \cite{Perales}. Note that these results assume pointwise estimates on the Ricci tensor. We refer the reader to the survey articles \cite{Sormani} and \cite{Perales222}. 

Compared to the above literature results, in this paper we prove a pre-compactness result for sequences $((M_i,g_i))_{i\geq1}$ of Riemannian $3$-manifolds with boundary under the assumption of uniform $L^2$-bounds on the Ricci tensor and $L^4$-bounds on the second fundamental form of the boundary, and a uniform positive lower bound on the volume radius, see Theorem \ref{thm:main1} with Corollary \ref{corollary:fundamentalharm}. 

The novely in our result is the application of a geometric trace estimate to control the Gauss curvature of the boundary of the manifold in low regularity (see also the next Section \ref{discussionbdryharm}) and a new rigidity argument based on the manifold double. Our presentation follows \cite{Petersen} and \cite{PetersenBook}.
\subsection{Boundary harmonic coordinates} \label{discussionbdryharm}

Our pre-compactness result for the Cheeger-Gromov theory on manifolds with boundary in low regularity is based on so-called \emph{boundary harmonic coordinates}. These were already studied in \cite{Anderson} in a higher regularity context. They are defined as follows. 

On a Riemannian $3$-manifold $(M,g)$, let $(x^1,x^2,x^3)$ be a local coordinate system near the boundary such that $\{ x^{3}=0 \} \subset \pr M$. These coordinates are called \emph{boundary harmonic} if for $i=1,2,3$ and $A=1,2$
\begin{align*}
\triangle_g x^i &=0  \,\,\, \text{in } \{ x^3>0\} \subset M,  \\
\Ld_{\gd} x^{A}&= 0 \,\,\, \text{on }  \{ x^3 =0 \} \subset \pr M.
\end{align*}
Here $\gd$ and $\Ld_{\gd}$ denote the induced metric and Laplace-Beltrami operator on $\pr M$, respectively. 

Boundary harmonic coordinates 
are particularly useful for the analysis of regularity of coordinates because the corresponding metric components $g_{ij}$ and $\gd_{AB}$ satisfy the elliptic equations
\begin{align}
\triangle_g g_{ij} + Q_{ij}(g,\pr g) &= -\RRRic_{ij} \,\,\, \text{in } \{ x^3>0\} \subset M,  \label{eqfito11} \\
\half \Ld_\gd \gd_{AB} + \Qdd_{AB}(\gd,\prd \gd) &= - K \gd_{AB} \,\,\,\text{on }  \{ x^3 =0 \} \subset \pr M, \label{eqfito22}
\end{align}
for $i,j=1,2,3$ and $A,B=1,2$. Here the nonlinearities are schematically given by 
\begin{align*}
Q_{ij} \approx g (\pr g)^2, \Qdd_{AB} \approx \gd (\prd \gd)^2
\end{align*}
where $\prd \in \{ \pr_1, \pr_2\}$, and $K$ denotes the Gauss curvature of the boundary $\pr M \subset M$, see Lemma \ref{lemmaharmexpr}. 

In Section \ref{sec:ExistenceL2}, we study a sequence $(g_i)_{i \geq1}$ of Riemannian metrics in boundary harmonic coordinates which weakly converges to a limit metric $g$ in $H^2$. We prove its strong convergence $g_i \to g$ in $H^2$ by applying the following general sequence of elliptic estimates for a Riemannian metric in boundary harmonic coordinates, see Section \ref{secstrong111} for details.
\begin{enumerate}

\item By a geometric trace estimate (see Section \ref{SectionTraceEstimateForK}), it holds that the Gauss curvature $K$ is bounded in $H^{-1/2}$ on $\pr M$ by an $L^2$-bound on $\RRRic$ and $L^4$-bound on $\Th$. Subsequently, by standard elliptic estimates applied to \eqref{eqfito22} on $\pr M$, the metric components $\gd_{AB}$ on $\{x^3=0\}\subset \pr M$ are controlled in $H^{3/2}$. Then, using that $g_{AB} \vert_{\{ x^3=0\}} =\gd_{AB}$ on $\pr M$, it follows by standard global elliptic estimates applied to \eqref{eqfito11} in $M$ that the metric components $g_{AB}$ are bounded in $H^2$.
\item In boundary harmonic coordinates, the inverse metric components $g^{33}$ and $g^{3A}$ also satisfy elliptic equations analogous to \eqref{eqfito11}, see Lemma \ref{lemmaharmexpr}. Moreover, on the boundary $\{ x^3=0\} \subset \pr M$, Neumann data for the components $g^{33}$ and $g^{3A}$ is explicitly determined by
\begin{align} \begin{aligned}
N \left( g^{33} \right) &= 2 \tr \Th g^{33},  \\
N \left( g^{3A} \right) &=  \tr \Th g^{3A} - \half \frac{1}{\sqrt{g^{33}}} g^{Ai} \pr_i g^{33}, 
\end{aligned} \label{nmdatafieto1} \end{align}
where $N$ denotes the outward-pointing unit normal to $\pr M$, see Lemma \ref{NeumannHarmonic}. Consequently, by standard elliptic estimates for the Neumann problem with Neumann data \eqref{nmdatafieto1}, it follows that the inverse metric components $g^{33}$ and $g^{3A}$ are bounded in $H^2$ in $M$ by an $L^2$-bound on $\RRRic$ and an $L^4$-bound on $\Th$.
\item We note that the above $H^2$-bounds on the metric components $g_{AB}$, $g^{33}$ and $g^{3A}$ imply $H^2$-bounds for all metric components $g_{ij}$, $i,j=1,2,3$. This follows from Kramer's rule and standard product estimates for $H^2$-functions.
\end{enumerate}


\subsection{Overview of the paper}
In Section \ref{sec:definitions}, we introduce notations, function spaces and the Riemannian geometry setting. In Section \ref{ExactStatement}, we state a precise version of our main result. In Section \ref{sec:setup}, we set up the theory of manifold convergence for manifolds with boundary in low regularity. In Sections \ref{sec:ExistenceL2} and \ref{sec:harmregularitymflds}, we prove parts (1) and (2) of Theorem \ref{thm:higherintroexharm}, respectively. In the appendix, we collect the standard elliptic regularity theory used in Sections \ref{sec:setup}, \ref{sec:ExistenceL2} and \ref{sec:harmregularitymflds}.


\subsection{Acknowledgements} This work forms part of my Ph.D. thesis. I am grateful to my Ph.D. advisor J\'er\'emie Szeftel for his kind supervision and careful guidance. Furthermore, I would like to thank Lars Andersson for suggesting the idea to use manifold doubling. This work is financially supported by the RDM-IdF.


\section{Notations, definitions and prerequisites} \label{sec:definitions}
In this section, we introduce notations, definitions and preliminary results that are used in this paper. 


\subsection{Basic notation} \label{sec:basicnotation1}
In this work, uppercase Latin indices run trough $A,B,C,D,E,F=1,2$ and lowercase Latin indices through $a,b,c,d,i,j=1,2,3$. Greek indices run through $\a,\b,\ga,\de,\mu, \nu = 0,1,2, 3$. We tacitly use the Einstein summation convention. We write $ A \lesssim B$ if there exists a universal constant $C>0$ such that $A \leq C B$. 

We denote strong and weak convergence by $\to$ and $\rightharpoonup$, respectively. We abuse notation by keeping the same index while going to a subsequence of a sequence.

Let the closed upper half-space of $\RRR^3$ be denoted by
$$\HHH^+ := \Big\{ x \in \RRR^3 \Big\vert x^3 \geq 0 \Big\},$$ 
and for points $x\in \HHH^+$ and real numbers $r>0$ let
\begin{align} \begin{aligned}
B(x,r) &:= \Big\{ y \in \RRR^3 \Big\vert \vert x-y \vert < r \Big\},&&& S(x,r) &:= \Big\{ y \in \RRR^3 \Big\vert \vert x-y \vert = r \Big\},\\
B^+(x,r) &:= B(x,r) \cap \HHH^+, &&& S^+(x,r) &:= S(x,r) \cap \HHH^+,\\
\ubp(x,r) &:= B^+(x,r) \cap \{ x^3 =0\}, &&& \usp(x,r) &:= S^+(x,r) \cap \{ x^3 =0 \}.\\
\end{aligned} \label{defballscoord}\end{align}  

In given coordinates $(x^1,x^2,x^3)$, let $\prd \in \{ \pr_1, \pr_2 \}$ and $\pr \in \{ \pr_1, \pr_2, \pr_3\}$. 

An open subset of $\RRR^n, n\geq1$, or of $\HHH^+$ has \emph{smooth boundary} if its closure has smooth boundary. A \emph{smooth domain} is an open subset $\Om$ of $\RRR^n, n\geq1$, or $\HHH^+$ which is connected and has smooth boundary $\pr \Om$. Denote by
\begin{align*}
\underline{\Om} := \Om \cap \{ x^3 =0\}.
\end{align*}

We note that $B^+(x,r)$ and $\ubp(x,r)$ have Lipschitz-regular and smooth boundary, respectively.

For $x\in \HHH^+$ and two reals $0<r'<r$, let $\Om_{x,r',r} \subset \HHH^+$ be a smooth domain such that
\begin{align} \label{defomegq3424145}
B^+(x,r') &\subset \subset \Om_{x,r',r} \subset \subset B^+(x,r), \\
\ubp(x,r') & \subset \subset \underline{\Om}_{x,r',r} \subset \subset \ubp(x,r),
\end{align}
and define the corresponding smooth cut-off function $\chi_{x,r',r}: \HHH^+ \to [0,1]$ such that
\begin{align}\begin{aligned} 
\chi_{x,r',r} \vert_{B^+(x,r')} &\equiv 1, \\
 \mathrm{supp } \chi_{x,r',r} &\subset \subset \Om_{x,r',r}. \end{aligned}\label{defchi}
\end{align}
See also the shaded region in Figure 1.
\begin{center}
\includegraphics[height=4cm]{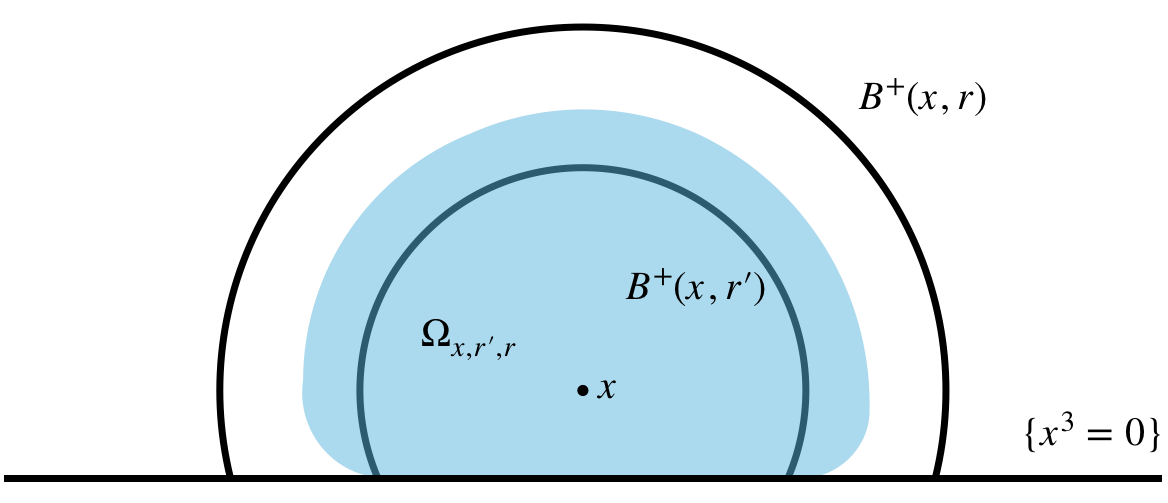} 
\captionof{figure}{The smooth domain $\Om_{x,r',r}$ is depicted as the shaded region.}
\end{center}

\subsection{Function and tensor spaces} \label{sec:FunctionSpaces}

In this section, we introduce the function and tensor spaces that are used in this paper. Let $n\geq 1$ be an integer.
\subsubsection{Continuous and H\"{o}lder-continuous functions}

\begin{definition} \label{defhoeldercont}
Let $m\geq0$ be an integer and $\a \in (0,1]$ a real. Let $\Om \subset \RRR^n$ be an open bounded set and $f$ a scalar function on $\Om$. Define the H\"{o}lder semi-norm by
\begin{align*}
[ f ]_{C^{0,\a}(\ol{\Om})} := \sup\limits_{x\neq y \in \ol{\Om}} \frac{\vert f(x)-f(y) \vert}{\vert x-y \vert^{\a}},
\end{align*}
and let the norm
\begin{align*}
\Vert f \Vert_{C^{m,\a}(\ol{\Om})} := \Vert f \Vert_{C^m(\ol{\Om})} + \max\limits_{\vert \be \vert = m} \left[ \pr^\be f \right]_{C^{0,\a}(\ol{\Om})},
\end{align*}
where 
\begin{align*}
\Vert f \Vert_{C^m(\ol{\Om})} := \max\limits_{\vert \be \vert \leq m}  \, \sup\limits_{x\in \ol{\Om}} \left\vert \pr^\be f \right\vert.
\end{align*}
Here $\be \in \mathbb{N}^n$ is a multi-index and
\begin{align*}
\vert \be \vert := \vert \be_1 \vert + \dots + \vert \be_n \vert, \,\,\,  \pr^\be := \pr_1^{\be_1} \dots \pr_n^{\be_n}.
\end{align*}
Let $C^m(\ol{\Om})$ be the space of $m$-times continuously differentiable functions on $\ol{\Om}$ equipped with the norm $\Vert \cdot \Vert_{C^m(\ol{\Om})}$. Let the H\"older spaces $C^{m,\a}(\ol{\Om})$ be defined by
\begin{align*}
C^{m,\a}(\ol{\Om}) := \left\{ f \in C^m(\ol{\Om}): \Vert f\Vert_{C^{m,\a}(\ol{\Om})} < \infty \right\}.
\end{align*}
\end{definition}


\subsubsection{Fractional Sobolev spaces} In this section we define the fractional Sobolev spaces $W^{s,p}(\RRR^n)$ for reals $s\in \RRR$ and $1<p<\infty$, and summarise basic properties. For more details, see for example \cite{Stein} and \cite{Adams}.

\begin{definition}[Fractional Sobolev spaces] Let $\Om \subset \RRR^n$ be an open set, and let $s\in \RRR$ and $1<p<\infty$ be two reals. The function space $W^{s,p}(\Om)$ is defined as
\begin{itemize}
\item If $s \geq0$ integer,
\begin{align*}
W^{s,p}(\Om) := \left\{ f \in L^p(\Om): \Vert f \Vert^p_{W^{s,p}(\Om)} < \infty \right\},
\end{align*}
where
\begin{align*}
\Vert f \Vert^p_{W^{s,p}(\Om)} := \sum\limits_{\vert \be \vert\leq s} \Vert \pr^\be f \Vert^p_{L^p(\Om)}.
\end{align*}
\item If $s \in (0,1)$,
\begin{align*}
W^{s,p}(\Om) := \left\{ f \in L^p(\Om): \Vert f \Vert^p_{W^{s,p}(\Om)} < \infty \right\},
\end{align*}
where
\begin{align*}
\Vert f \Vert^p_{W^{s,p}(\Om)} := \Vert f \Vert_{L^p(\Om)}^p + \int\limits_{\Om}\int\limits_{\Om} \frac{\vert f(x) -f(y) \vert^p}{\vert x-y \vert^{n+ sp}} dx dy.
\end{align*}
\item If $s = k + \th$ with $k\geq0$ integer, $\th \in (0,1)$,
\begin{align*}
W^{s,p}(\Om) := \left\{ f \in L^{p}(\Om) : \Vert f \Vert_{W^{s,p}(\Om)} < \infty \right\}.
\end{align*}
where
\begin{align*}
 \Vert f \Vert_{W^{s,p}(\Om)} := \Vert f \Vert_{W^{k,p}(\Om)} +\sum\limits_{\vert \a \vert=k} \Vert \pr^\a f \Vert_{W^{\th,p}(\Om)}.
\end{align*}
\end{itemize}
For $s<0$, let $W^{s,p}(\Om)$ be the dual space to the closure of compactly supported smooth functions with respect to the topology of $W^{-s, \frac{p}{p-1}}(\Om)$, denoted by $W_0^{-s,\frac{p}{p-1}}(\Om)$.
\end{definition}
For $s>0$ not integer, $W^{s,p}$ is called \emph{Sobolev-Slobodeckij} space in the literature.

\begin{remark} For $s\in \RRR$ and $p=2$, we have the identification
\begin{align*}
W^{s,p}(\RRR^n)=H^s(\RRR^n),
\end{align*}
where $H^s(\RRR^n)$ is the Sobolev space defined 
\begin{itemize}
\item for $s<0$ by
\begin{align*}
H^s(\RRR^n) := \mathrm{closure}\left( \left\{ f \in \mathcal{S'}(\RRR^n) : \widehat{f} \in L^1_{loc}(\RRR^n), \Vert (1+ \vert \xi \vert^2)^{s/2} \widehat{f} \, \Vert_{L^2(\RRR^n)} < \infty \right\} \right).
\end{align*}
\item for $s\geq0$ by
\begin{align*}
H^s(\RRR^n) := \mathrm{closure}\left( \left\{ f \in \mathcal{S}(\RRR^n) : \Vert (1+ \vert \xi \vert^2)^{s/2} \widehat{f} \, \Vert_{L^2(\RRR^n)} < \infty \right\} \right).
\end{align*}
\end{itemize}
In the above, the closure is taken with respect to the indicated norm, and the space $\mathcal{S}(\RRR^n)$ denotes the space of Schwartz functions on $\RRR^n$ and $\mathcal{S}'(\RRR^n)$ its dual space, also called the \emph{space of tempered distributions}. Moreover, the Fourier transform $\widehat{\cdot}: \mathcal{S}(\RRR^n) \to \mathcal{S}(\RRR^n)$ and its inverse $\overset{\vee}{\cdot}: \mathcal{S}(\RRR^n) \to \mathcal{S}(\RRR^n)$ are defined for $f_1, f_2 \in \mathcal{S}(\RRR^n)$ by
\begin{align*}
\widehat{f_1}(\xi) &:= \int\limits_{\RRR^n} e^{-i2\pi x \xi } f_1(x) dx,\\
\overset{\vee}{f_2}(x) &:=\frac{1}{(2\pi)^n} \int\limits_{\RRR^n} e^{i2\pi x \xi } f_2(\xi) d\xi.
\end{align*}
We recall that $\, \widehat{\cdot}$ and $\overset{\vee}{\cdot}$ extend to mappings $\widehat{\cdot}: \mathcal{S'} \to \mathcal{S'}$ and $\overset{\vee}{\cdot}: \mathcal{S'} \to \mathcal{S'}$, see \cite{SimonReed}. \newline
For an open set $\Om\subset \RRR^n$ and $s \in \RRR$, we denote on $H^s(\Om)= W^{s,2}(\Om)$. \end{remark}


The following lemma follows from more general Sobolev embeddings for $W^{s,p}$ spaces, see for example \cite{Stein} and \cite{Adams} for details and proofs.
\begin{lemma}[Sobolev embeddings] \label{sobolevdomainlemma}
Let $\Om^2 \subset \RRR^2$ and $\Om^3 \subset \RRR^3$ be smooth domains. Then, the following are continuous embeddings,
\begin{align*}
&H^{1/2}(\Om^2) \hookrightarrow L^4(\Om^2), \\
&L^2(\Om^2) \hookrightarrow H^{-1/2}(\Om^2).
\end{align*}
If, furthermore, $\Om^2$ and $\Om^3$ are bounded, then the following continuous embeddings are also compact,
\begin{align*}
&H^2(\Om^3) \hookrightarrow C^{0,\a}(\ol{\Om^3}) \text{ for } \a \in (0,1/2), \\
&H^2(\Om^3) \hookrightarrow W^{1,4}(\Om^3), \\
&H^2(\Om^3) \hookrightarrow H^1(\Om^3), \\
&H^{3/2}(\Om^2) \hookrightarrow H^{1/2}(\Om^2).
\end{align*}
\end{lemma}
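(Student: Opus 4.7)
The plan is to reduce every embedding to its analogue on $\RRR^n$, where the theory is classical, and then to transfer the conclusion back to $\Om$ via a bounded linear extension operator $E\colon W^{s,p}(\Om)\to W^{s,p}(\RRR^n)$. Such an operator exists for smooth domains at every order $s\in\RRR$ and every $1<p<\infty$ by Stein's universal extension theorem (Stein, Ch.~VI). Given any embedding $W^{s_1,p_1}(\RRR^n)\hookrightarrow W^{s_2,p_2}(\RRR^n)$, I would extend, apply the whole-space embedding, and restrict back to $\Om$; the same scheme handles all six items in the lemma.

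For the two continuous embeddings on $\Om^2$, the inclusion $H^{1/2}(\Om^2)\hookrightarrow L^4(\Om^2)$ is the sharp subcritical fractional Sobolev embedding at $n=2$, $s=1/2$, $p=2$, forcing the target exponent $np/(n-sp)=4$. The inclusion $L^2(\Om^2)\hookrightarrow H^{-1/2}(\Om^2)$ I would obtain by duality: by definition $H^{-1/2}(\Om^2)$ is the dual of $H^{1/2}_0(\Om^2)$, and the trivial continuous inclusion $H^{1/2}_0(\Om^2)\hookrightarrow L^2(\Om^2)$ (built into the very definition of the $H^{1/2}$-norm) dualizes, after self-identifying $L^2$ with its dual, to the desired statement.

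For the compact embeddings on bounded smooth domains I would proceed case by case using Rellich--Kondrachov-type theorems. The embedding $H^2(\Om^3)\hookrightarrow H^1(\Om^3)$ is directly Rellich--Kondrachov, and $H^{3/2}(\Om^2)\hookrightarrow H^{1/2}(\Om^2)$ is its fractional-order analogue on bounded smooth domains. For $H^2(\Om^3)\hookrightarrow W^{1,4}(\Om^3)$, I would apply the gradient map $H^2\to H^1$ followed by the compact Rellich--Kondrachov embedding $H^1(\Om^3)\hookrightarrow L^q(\Om^3)$ for any $q<6$, in particular $q=4$, so that both $f$ and $\nab f$ converge strongly in $L^4$. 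For $H^2(\Om^3)\hookrightarrow C^{0,\a}(\ol{\Om^3})$ with $\a<1/2$, Morrey's inequality gives the continuous embedding $H^2(\RRR^3)\hookrightarrow C^{0,1/2}(\ol{\Om^3})$; a uniformly $C^{0,1/2}$-bounded family on a bounded set is equicontinuous, Arzel\`a--Ascoli furnishes a $C^0$-convergent subsequence, and interpolation between $C^0$ and $C^{0,1/2}$ upgrades this to convergence in every $C^{0,\a}$ with $\a<1/2$.

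The main obstacle here is really just bookkeeping, since every individual inequality is a textbook fact on $\RRR^n$. The one ingredient demanding honest care is the existence of a bounded extension operator at \emph{fractional} and \emph{negative} orders (here $s=\pm 1/2$ and $s=3/2$), which is why I would explicitly invoke Stein's universal extension theorem rather than any ad hoc construction. A secondary subtlety is that the H\"older exponent is sharp: $H^2\hookrightarrow C^{0,1/2}$ is continuous but \emph{not} compact, which is exactly what forces the strict inequality $\a<1/2$ in the statement.
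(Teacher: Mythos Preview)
Your proposal is correct and essentially matches the paper's approach: the paper does not give a proof at all but simply states that the lemma follows from more general Sobolev embeddings for $W^{s,p}$ spaces and refers to \cite{Stein} and \cite{Adams} for details. Your sketch---reducing to $\RRR^n$ via Stein's extension operator, invoking the classical fractional Sobolev and Rellich--Kondrachov theorems, and handling the H\"older case via Morrey plus Arzel\`a--Ascoli and interpolation---is exactly the standard unpacking of those references and goes well beyond what the paper itself supplies.
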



The following are standard product estimates, we refer the reader to Section 13.3 in \cite{TaylorPDE} and \cite{Maxwell} for more details and proofs.
\begin{lemma}[Product estimates] \label{lemmaMoserEstimates} Let $\Om^2 \subset \RRR^2$ be a smooth domain, and let $u$ and $v$ be functions on $\Om^2$. Then,
\begin{align*}
\Vert uv \Vert_{H^{1/2}(\Om^2)} \lesssim \Vert u \Vert_{H^{5/4}(\Om^2)} \Vert v \Vert_{H^{1/2}(\Om^2)}.
\end{align*}
Moreover, for every integer $m\geq0$, there is a constant $C_m>0$ such that
\begin{align*}
\Vert u v \Vert_{H^{m+3/2}(\Om^2)} \lesssim& \Vert u \Vert_{H^{m+3/2}(\Om^2)} \Vert v \Vert_{H^{3/2}(\Om^2)} + \Vert u \Vert_{H^{3/2}(\Om^2)} \Vert v \Vert_{H^{m+3/2}(\Om^2)} \\
&+ C_m ( \Vert u \Vert_{H^{3/2}(\Om^2)} + \Vert v \Vert_{H^{3/2}(\Om^2)}).
\end{align*}
Let $\Om^3 \subset \RRR^3$ be a smooth domain, and let $u$ and $v$ be functions on $\Om^3$. Then for every integer $m\geq0$ there is a constant $C_m>0$ such that
\begin{align*}
\Vert u v \Vert_{H^{m}(\Om^3)} \lesssim& \Vert u \Vert_{H^{m}(\Om^3)} \Vert v \Vert_{H^{2}(\Om^3)} + \Vert u \Vert_{H^{2}(\Om^3)} \Vert v \Vert_{H^{m}(\Om^3)} \\
&+ C_m ( \Vert u \Vert_{H^{2}(\Om^3)} + \Vert v \Vert_{H^{2}(\Om^3)}).
\end{align*}
\end{lemma}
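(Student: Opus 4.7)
The plan is to reduce all three inequalities to analogous statements on the full space $\RRR^n$ by means of a total extension operator. For a smooth bounded domain $\Om\subset\RRR^n$, the Stein--Calder\'on extension furnishes a bounded linear map $E:H^s(\Om)\to H^s(\RRR^n)$ for every $s\in\RRR$; setting $\widetilde u=Eu$ and $\widetilde v=Ev$ one has $\|uv\|_{H^s(\Om)}\le\|\widetilde u\widetilde v\|_{H^s(\RRR^n)}$, so it suffices to work on $\RRR^n$, where Fourier and Littlewood--Paley tools are available.

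On $\RRR^n$ I would invoke the general bilinear Sobolev product rule
\[
\|fg\|_{H^s(\RRR^n)}\lesssim \|f\|_{H^{s_1}(\RRR^n)}\|g\|_{H^{s_2}(\RRR^n)},\qquad s_1,s_2\ge s,\quad s_1+s_2-s>n/2,
\]
which is proved via the Bony paraproduct decomposition $fg=T_fg+T_gf+R(f,g)$, together with Bernstein's inequality and $\ell^2$-summation of Littlewood--Paley blocks; equivalently, via the pointwise bound $(1+|\xi|^2)^{s/2}\lesssim (1+|\xi-\eta|^2)^{s/2}+(1+|\eta|^2)^{s/2}$ inside the convolution $\widehat{fg}=\widehat f\ast\widehat g$ followed by Cauchy--Schwarz. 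The first assertion of the lemma is the case $(n,s,s_1,s_2)=(2,1/2,5/4,1/2)$, for which the numerology $s_1+s_2-s=5/4>1=n/2$ is satisfied. For the Moser-type estimates, the two base regularities $(n,s_0)=(2,3/2)$ and $(n,s_0)=(3,2)$ both sit strictly above the algebra threshold $s_0>n/2$, so $H^{s_0}(\RRR^n)\hookrightarrow L^\infty$ and $H^{s_0}(\RRR^n)$ is a Banach algebra. Induction on $m$ using the paraproduct decomposition then yields
\[
\|fg\|_{H^{m+s_0}(\RRR^n)}\lesssim \|f\|_{H^{m+s_0}}\|g\|_{H^{s_0}}+\|f\|_{H^{s_0}}\|g\|_{H^{m+s_0}},
\]
where the low-high and high-low paraproducts produce the two symmetric terms while the high-high remainder $R(f,g)$ is absorbed using the algebra property combined with Gagliardo--Nirenberg interpolation between $H^{s_0}$ and $H^{m+s_0}$. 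Returning to $\Om$ via the bounded extension reproduces the second and third assertions of the lemma; the auxiliary term $C_m(\|u\|_{H^{s_0}}+\|v\|_{H^{s_0}})$ accommodates the schematic overhead coming from the Stein--Calder\'on constant.

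The principal technical obstacle is control of the high-high remainder $R(f,g)$ without loss of a derivative: one must sum a diagonal series $\sum_j\|\Delta_jf\,\Delta_jg\|_{H^s(\RRR^n)}$, and geometric convergence of this sum relies precisely on the strict excess $s_0-n/2>0$, which is the reason both estimates are stated at $H^{3/2}$ in two dimensions and $H^{2}$ in three. Once this endpoint summation is carried out, the symmetric bilinear structure of the estimates emerges automatically; apart from this delicate but standard step, the argument is routine and essentially identical to the presentations in Taylor's treatise and Maxwell's paper cited in the statement.
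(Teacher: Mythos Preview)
The paper does not actually give a proof of this lemma; it simply states that the estimates are standard and refers the reader to Section~13.3 of Taylor's \emph{Partial Differential Equations III} and to Maxwell's paper for details. Your outline---reduce to $\RRR^n$ via a universal Stein extension, then invoke the bilinear Sobolev product rule proved by paraproduct decomposition, and finally the Kato--Ponce/Moser estimate for $H^{s_0}$ with $s_0>n/2$---is precisely the argument one finds in those references, and the numerology you check for the first inequality, $(n,s,s_1,s_2)=(2,1/2,5/4,1/2)$ with $s_1+s_2-s=5/4>1$, is correct.

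One small remark: your explanation that the additive term $C_m(\|u\|_{H^{s_0}}+\|v\|_{H^{s_0}})$ ``accommodates the schematic overhead coming from the Stein--Calder\'on constant'' is not quite right. The extension is linear and bounded with a fixed constant in each norm, so the bilinear estimate on $\RRR^n$ transfers to $\Om$ without any additive remainder; the extra term in the paper's statement is simply a harmless weakening, not something forced by the extension. This does not affect the validity of your argument.
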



We define now the \emph{trace operator} for continuous functions.
\begin{definition}[Trace operator for continuous functions] Let $n\geq1$ be an integer and let $\Om \subset \RRR^n$ be a smooth domain. Let $f$ be a continuous scalar function on $\ol{\Om}$. Denote the restriction of $f$ to $\pr \Om$ by 
\begin{align*}
\tau(f) := f \vert_{\pr \Om}.
\end{align*}
\end{definition}


\ni The trace operator $\tau$ extends to Sobolev spaces as follows.
\begin{lemma}[Trace operator for $W^{k,p}$-functions] \label{sobolevtracedomain} Let $n\geq1$ and $k\geq1$ be integers and $1<p< \infty$ a real. Let $\Om \subset \RRR^n$ be a smooth domain. Then the trace operator $\tau$ extends to a bounded linear operator between the following function spaces,
\begin{align*}
W^{k,p}(\Om) \to W^{k-1/p,p}(\pr \Om).
\end{align*}
\end{lemma}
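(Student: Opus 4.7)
The plan is to establish the bound first for the model half-space $\HHH^+ \subset \RRR^n$ with boundary $\{x^n=0\} \cong \RRR^{n-1}$, and then transfer it to the general smooth domain $\Om$ by a standard localization and boundary-straightening argument. For the latter, I would cover $\pr \Om$ by finitely many coordinate charts $(U_\a, \Phi_\a)$ with $\Phi_\a: U_\a \cap \ol{\Om} \to \HHH^+ \cap B(0,1)$ a smooth diffeomorphism sending $U_\a \cap \pr \Om$ onto $\{x^n = 0\} \cap B(0,1)$, choose a subordinate partition of unity $\{\chi_\a\}$, and note that the smoothness of the charts means that pullback is a bounded operation on both $W^{k,p}$ (on the interior piece) and $W^{k-1/p,p}$ (on the flat boundary piece), so it suffices to prove the estimate on compactly supported functions in $\HHH^+$. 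Throughout, I would use the density of $C^\infty(\ol{\Om}) \cap W^{k,p}(\Om)$ in $W^{k,p}(\Om)$ to justify defining $\tau$ by continuous extension.

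For the model case $k=1$ on $\HHH^+$, I would start from $u \in C_c^\infty(\ol{\HHH^+})$ and prove the estimate
\begin{align*}
\Vert u(\cdot,0) \Vert_{W^{1-1/p,p}(\RRR^{n-1})} \lesssim \Vert u \Vert_{W^{1,p}(\HHH^+)}.
\end{align*}
The $L^p(\RRR^{n-1})$ part comes from the one-dimensional estimate $|u(x',0)|^p \leq p \int_0^\infty |u|^{p-1} |\pr_n u| \, dx^n$ applied in each vertical fiber together with H\"older's inequality. For the Gagliardo seminorm of $u(\cdot,0)$, I would write, for $x', y' \in \RRR^{n-1}$,
\begin{align*}
u(x',0) - u(y',0) = -\int_0^{|x'-y'|} \frac{d}{dt}\, u\bigl(\gamma(t)\bigr)\, dt
\end{align*}
along a suitable path $\gamma$ moving from $(x',0)$ up to height $|x'-y'|$ and then over to $(y',0)$, and bound the resulting integral against $\int_{\HHH^+} |\nab u|$ with weight $|x'-y'|^{-(n-1)-(1-1/p)p}$; integrating in $x',y'$ and using Hardy-type inequalities in the vertical variable recovers $\Vert \nab u \Vert_{L^p(\HHH^+)}^p$. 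This is exactly the classical computation carried out, e.g., in Adams, \emph{Sobolev Spaces}, and Stein, \emph{Singular Integrals}.

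For $k \geq 2$, I would iterate: apply the $k=1$ result to each tangential derivative $\pr^{\be'} u$ of order $|\be'| \leq k-1$ (where $\be' \in \mathbb{N}^{n-1}$ does not differentiate in the normal direction) to control $\Vert \pr^{\be'} (u|_{\pr\HHH^+}) \Vert_{W^{1-1/p,p}(\RRR^{n-1})}$ by $\Vert \pr^{\be'} u \Vert_{W^{1,p}(\HHH^+)} \leq \Vert u \Vert_{W^{k,p}(\HHH^+)}$; this gives
\begin{align*}
\Vert u(\cdot,0) \Vert_{W^{k-1/p,p}(\RRR^{n-1})} \lesssim \Vert u \Vert_{W^{k,p}(\HHH^+)}.
\end{align*}
No normal derivative ever appears on the boundary because the trace itself only sees tangential information. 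Finally, I would conclude by approximating a general $f \in W^{k,p}(\Om)$ by smooth functions, extending $\tau$ by continuity, and using linearity.

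The step I expect to be the main obstacle is the fractional Gagliardo-seminorm estimate in the half-space case for general $1 < p < \infty$: the Fourier-side proof available for $p=2$ does not carry over, and one has to be careful with the weighted one-dimensional integrals and the application of H\"older's inequality to get the sharp exponent $1-1/p$. Once that core computation is in hand, the rest — iteration in $k$, localization, straightening, density — is a routine packaging exercise.
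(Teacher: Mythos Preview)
Your proposal outlines the standard textbook proof of the trace theorem, and the strategy is correct. However, you should be aware that the paper does not actually prove this lemma: it is stated as a known result from the literature (the paper's references \cite{Stein} and \cite{Adams} are the standard sources), and no proof is given in the paper itself. So there is nothing to compare against beyond noting that your sketch follows exactly the classical argument found in those references.

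One small comment on the sketch itself: in your treatment of the Gagliardo seminorm via a path $\gamma$ going up to height $|x'-y'|$ and across, the resulting estimate requires a bit more care than ``Hardy-type inequalities in the vertical variable'' suggests; the clean way is to use the averaging argument (bounding $|u(x',0)-u(y',0)|$ by integrals of $|\nabla u|$ over the two cones with vertices $(x',0)$, $(y',0)$ and common base at height $\sim |x'-y'|$) and then apply Fubini together with a change of variables, as in Gagliardo's original proof. Your version can be made to work but is slightly awkward to execute for general $p$. This is a minor point of presentation, not a genuine gap.
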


\begin{remark} \label{remark453546} We cannot apply Lemmas \ref{sobolevdomainlemma}, \ref{lemmaMoserEstimates} and \ref{sobolevtracedomain} directly to the sets $B^+(x,r)$ because they are not smooth domains. Therefore, whenever these lemmas are invoked in Sections \ref{sec:setup}, \ref{sec:ExistenceL2} and \ref{sec:harmregularitymflds}, we tacitly apply the lemmas to the smooth domain $\Om_{x,r',r}$ between $B^+(x,r')$ and $B^+(x,r)$, see its definition in \eqref{defomegq3424145}, so that the estimates hold on $B^+(x,r')$ for a slightly smaller $r'<r$. \end{remark}



In Section \ref{SectionTraceEstimateForK}, we work with the space $H^{-1/2}(\RRR^2)$. To ease the presentation, we now introduce a Fourier operator $\langle \prd \rangle^{-1}$ and summarise its basic properties in Lemma \ref{propprd}.
\begin{definition} Let $f\in \SS(\RRR^3)$ be a scalar function. Define
\begin{align*}
\langle \prd \rangle^{-1} f(\cdot,x^3):= \left( (1+ (\xi^1)^2+(\xi^2)^2)^{-1/2} \widehat{f}(\xi^1, \xi^2,x^3) \right)^{\vee},
\end{align*} 
where the Fourier transform and its inverse are taken with the respect the variables $x^1,x^2$ only.
\end{definition}


The proof of the next lemma is left to the reader.
\begin{lemma}[Properties of $\langle \prd \rangle^{-1}$] \label{propprd} The following holds.
\begin{itemize}
\item The operator $\langle \prd \rangle^{-1}$ extends to a mapping $L^2(\RRR^3) \to L^2(\RRR^3)$, and for two functions $f,f' \in L^2(\RRR^3)$, we have
\begin{align*}
\int\limits_{\RRR^3} f \left( \langle \prd \rangle^{-1} f' \right)= \int\limits_{\RRR^3} \left( \langle \prd \rangle^{-1} f \right) f'.
\end{align*}
\item Let $f \in \mathcal{S}(\RRR^3)$ be a scalar function and $s \in \RRR$,
\begin{align*}
\Vert \langle\prd \rangle^{-1} f(\cdot,x^3) \Vert_{H^{s+1}(\RRR^2)} = \Vert f(\cdot,x^3) \Vert_{H^s(\RRR^2)}.
\end{align*}
\item Let $f \in \mathcal{S}(\RRR^3)$ be a scalar function, then
\begin{align*}
\Vert f(\cdot, x^3)\Vert_{H^{-1/2}(\RRR^2)}^2 &:= \Vert (1+ \vert \cdot \vert^2)^{-1/4} \widehat{f}(\cdot,x^3) \Vert^2_{L^2(\RRR^2)}\\
& =  \int\limits_{\RRR^2} f(x^1,x^2,x^3) \langle \prd \rangle^{-1} f(x^1,x^2,x^3) dx^1 dx^2 .
\end{align*}
\item Let $f \in \mathcal{S}(\RRR^3)$ be a scalar function, then
\begin{align*}
\left[ \pr_{x^3} , \langle \prd \rangle^{-1} \right] f =0.
\end{align*}
\end{itemize}
\end{lemma}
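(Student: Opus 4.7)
The plan is to derive all four statements directly from the definition, which realizes $\langle \prd \rangle^{-1}$ as the partial Fourier multiplier in the horizontal variables $(x^1,x^2)$ with symbol $m(\xi^1,\xi^2) := (1+(\xi^1)^2+(\xi^2)^2)^{-1/2}$. Throughout, write $\widehat{\,\cdot\,}$ for this partial Fourier transform and treat $x^3$ as a parameter, so that $\widehat{\langle \prd \rangle^{-1} f}(\xi',x^3) = m(\xi')\,\widehat{f}(\xi',x^3)$ for $\xi' = (\xi^1,\xi^2)$.

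For the first bullet, the $L^2$-boundedness is immediate from the bound $0 < m \leq 1$: partial Plancherel on each horizontal slice $\{x^3 = \mathrm{const}\}$ gives $\Vert \langle \prd \rangle^{-1} f(\cdot,x^3) \Vert_{L^2(\RRR^2)} \leq \Vert f(\cdot,x^3)\Vert_{L^2(\RRR^2)}$, and Fubini in $x^3$ together with the density of $\mathcal{S}(\RRR^3)$ in $L^2(\RRR^3)$ delivers the extension to a contraction on $L^2(\RRR^3)$. The pairing identity follows again by partial Plancherel: for $f,f' \in \mathcal{S}(\RRR^3)$, both sides reduce to
\[ \int\limits_{\RRR}\int\limits_{\RRR^2} \widehat{f}(\xi',x^3)\, m(\xi')\, \widehat{f'}(-\xi',x^3)\, d\xi'\, dx^3, \]
using that $m$ is real and even, and the identity extends to $L^2(\RRR^3)$ by density. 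The fourth bullet is the observation that horizontal Fourier transformation and the vertical derivative $\pr_{x^3}$ act on independent variables and therefore commute on $\mathcal{S}(\RRR^3)$; equivalently, $m$ is independent of $x^3$ and $\xi^3$, so multiplication by $m$ commutes with $\pr_{x^3}$ on the Fourier side.

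For the second bullet, the Fourier characterization of the $H^{s+1}(\RRR^2)$-norm together with the algebraic identity $(1+|\xi'|^2)^{s+1}\, m(\xi')^2 = (1+|\xi'|^2)^s$ gives at once
\[ \Vert \langle \prd \rangle^{-1} f(\cdot,x^3)\Vert_{H^{s+1}(\RRR^2)}^2 = \int\limits_{\RRR^2} (1+|\xi'|^2)^s\, |\widehat{f}(\xi',x^3)|^2\, d\xi' = \Vert f(\cdot,x^3)\Vert_{H^s(\RRR^2)}^2. \]
The third bullet is then the combination of the slice version of the first bullet with this Fourier expression at $s=-1/2$: the horizontal pairing $\int_{\RRR^2} f\, \langle \prd \rangle^{-1} f\, dx^1 dx^2$ equals $\int_{\RRR^2} m(\xi')\, |\widehat{f}(\xi',x^3)|^2\, d\xi'$, which is by definition $\Vert f(\cdot,x^3)\Vert_{H^{-1/2}(\RRR^2)}^2$. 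I do not expect any genuine obstacle here: the arguments are all one-line manipulations of the multiplier symbol. The only minor bookkeeping concerns real-valuedness in the pairing identities, but in every application in the paper the functions involved are real-valued, so that the absence of complex conjugation is harmless.
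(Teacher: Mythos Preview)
Your proposal is correct and is precisely the direct Fourier-multiplier verification one expects; the paper itself states that the proof is left to the reader, so there is no alternative argument to compare against.
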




\subsubsection{Tensor spaces}

\begin{definition}[Tensor spaces] Let $n\geq1$ be an integer. Let $\Om \subset \RRR^n$ be an open set and let $T$ be a tensor on $\Om$. For reals $s \in \RRR$ and $1<p<\infty$, and integers $k\geq0$, we let
\begin{align*}
\WW^{s,p}(\Om), \HH^{s}(\Om), \LL^p(\Om) \text{ and } \CC^{k}(\Om)
\end{align*}
denote the spaces of tensors whose coordinate components are respectively in 
\begin{align*}
W^{s,p}(\Om), H^{s}(\Om), L^p(\Om) \text{ and } C^k(\Om),
\end{align*}
equipped with the standard norm, that is, for example, for a $(l,m)$-tensor $T$ on $\Om$,
\begin{align*}
\Vert T \Vert_{\WW^{s,p}(\Om)} := \sum\limits_{i_1, \dots i_l=1}^n \sum\limits_{j_1 \dots j_m=1}^n \Vert T^{i_1 \dots i_{l}}_{\,\,\,\,\,\,\,\,\,\,\,\,\,\, j_1 \dots j_m} \Vert_{W^{s,p}(\Om)},
\end{align*}
where $T^{i_1 \dots i_{l}}_{\,\,\,\,\,\,\,\,\,\,\,\,\,\, j_1 \dots j_m}$ denotes the coordinate components.
\end{definition}


\subsection{Riemannian geometry and boundary harmonic coordinates} \label{sec:riemdg}


\begin{definition}[Volume radius at scale $r$] \label{def:volumeradius}
Let $(M,g)$ be a Riemannian $3$-manifold with boundary. For a real $r>0$ and a point $p \in M$, the \emph{volume radius at scale $r$ at $p$} is defined as
\begin{align*}
r_{vol}(r,p) := \inf\limits_{r'< r} \frac{\mathrm{vol}_g \left( B_g(p,r') \right)}{\frac{4\pi}{3} (r')^{3}},
\end{align*}
where $B_g(p,r')$ denotes the geodesic ball of radius $r'$ centered at $p$. The \emph{volume radius of $(M,g)$ at scale $r$} is defined as
\begin{align*}
r_{vol}(M,r) := \inf\limits_{p \in M} r_{vol}(r,p).
\end{align*}
\end{definition}


\begin{definition}[Boundary harmonic coordinates] \label{def:bdryharm}
Let $(M,g)$ be a smooth Riemannian $3$-manifold with boundary. For a point $x \in \HHH^+$ and a real $r>0$, let 
$\varphi: B^+(x,r) \to U \subset M$ 
be a chart of $M$ such that $\{x^3=0\} \subset \pr M$. The coordinates $(x^1,x^2,x^3)$ are called \emph{boundary harmonic} if
\begin{align*}
\triangle_g x^j &=0 \text{ on } B^+(x,r) \text{ for } j=1,2,3, \\
\Ld_\gd x^A &= 0 \text{ on } \ubp(x,r) \text{ for } A=1,2.
\end{align*}
Here $\Ld_\gd$ denotes Laplace-Beltrami operator of the induced metric $\gd$ on $\ubp(x,r)$. In this case, we also call the chart $\varphi: B^+(x,r) \to U \subset M$ boundary harmonic.
\end{definition}


\begin{remark} In the above definition, as in the rest of this paper, we abuse notation by not explicitly denoting the pullback of $g$ by $\varphi$. For example, we write $\triangle_g x^j $ on $B^+(x,r)$ instead of $\triangle_{\varphi^\ast g} x^j$ on $B^+(x,r)$. \end{remark}


The proof of the following properties of boundary harmonic coordinates is left to the reader, see also Lemma 11.2.6 in \cite{Petersen} for the case of manifolds without boundary.

\begin{lemma} \label{lemmaharmexpr}
Let $g$ be a Riemannian metric in boundary harmonic coordinates $(x^1, x^2, x^3)$ on $B^+(x,r)$. Then the following holds for $i,j=1,2,3$ and $A,B =1,2$.
\begin{itemize}
\item The coordinate components $g_{ij}$ satisfy on $B^+(x,r)$
\begin{align*}
\half \triangle_g g_{ij} + Q_{ij} = -\RRRic_{ij},
\end{align*}
where $Q_{ij}(g,\pr g) := -  g \left( \nab g_{ik}, \nab g^{kl} \right) g_{lj} +  g\left( \mathrm{Hess} \, x^k, \mathrm{Hess} \, x^l \right) g_{ik} g_{lj}$ and $\RRRic$ is the Ricci tensor of $g$. 
\item The components $g^{ij}$ of the inverse metric satisfy on $B^+(x,r)$
\begin{align*}
\half \triangle_g g^{ij} - Q^{ij} = \RRRic^{ij},
\end{align*}
where $Q^{ij} := g\left( \mathrm{Hess} \, x^i, \mathrm{Hess} \, x^j \right)$, and $\RRRic^{ij} = g^{il} g^{jm} \RRRic_{lm}$.
\item The coordinate components of the induced metric $\gd_{AB}$ satisfy on $\ubp(x,r)$
\begin{align*}
\half \Ld_\gd \gd_{AB} + \Qdd_{AB} &= - K \gd_{AB},
\end{align*}
where $\Qdd_{AB}:= - \gd \left( \Nd \gd_{AC}, \Nd \gd^{CD} \right) \gd_{DB} + \gd_{AC} \gd\left(  {\mathrm{Hess} \mkern-15mu / \mkern+5mu} \, x^C,  {\mathrm{Hess} \mkern-15mu / \mkern+5mu} \, x^D \right) \gd_{lB}$, with $\Nd$ the induced covariant derivative on $\ubp(x,r)$, and $K$ denotes the Gauss curvature of $\ubp(x,r)$.
\item The Laplace-Beltrami operators on $B^+(x,r)$ and $\ubp(x,r)$ are respectively given by
\begin{align*}
\triangle_g u = g^{ij} \pr_i \pr_j u, \, \Ld_\gd u = \gd^{AB} \prd_A \prd_B u.
\end{align*}
\end{itemize}
\end{lemma}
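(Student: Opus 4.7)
The four assertions are essentially restatements of the classical DeTurck--Kazdan identity in harmonic coordinates, adapted both to the interior $B^+(x,r)$ and to the $2$-dimensional boundary piece $\ubp(x,r)$. I will organise the proof as follows.

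\textbf{Step 1 (simplification of the Laplace--Beltrami operator).} For any scalar function $u$ the identity
\begin{align*}
\triangle_g u \;=\; g^{ij}\nab_i \nab_j u \;=\; g^{ij}\pr_i \pr_j u \,-\, V^k \pr_k u,
\qquad V^k := g^{ij}\Gamma^k_{ij},
\end{align*}
holds in any coordinate system. Applying this to $u=x^k$ gives $\triangle_g x^k = -V^k$, so the harmonic condition $\triangle_g x^k=0$ is equivalent to $V^k=0$. This immediately produces the first formula of the last bullet, $\triangle_g u = g^{ij}\pr_i\pr_j u$. The same argument on $\ubp(x,r)$, using the $2$-dimensional analogue $\Ld_\gd u = \gd^{AB}\prd_A\prd_B u - \underline V^A \prd_A u$ with $\underline V^A = \gd^{BC}\,{}^{(\gd)}\Gamma^A_{BC}$, and the boundary harmonic condition $\Ld_\gd x^A = 0$, gives the second formula of the last bullet.

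\textbf{Step 2 (formula for $g_{ij}$).} Write the Ricci tensor in coordinates,
\begin{align*}
R_{ij} \;=\; \pr_k \Gamma^k_{ij} - \pr_i \Gamma^k_{kj} + \Gamma^k_{kl}\Gamma^l_{ij} - \Gamma^k_{il}\Gamma^l_{kj},
\end{align*}
and reorganise the linear second-derivative terms using the identity (valid in any coordinates)
\begin{align*}
2 R_{ij} \;=\; -g^{ab}\pr_a\pr_b g_{ij} \,+\, g_{ik}\pr_j V^k + g_{jk}\pr_i V^k \,+\, \mathcal{N}_{ij}(g,\pr g),
\end{align*}
where $\mathcal{N}_{ij}$ is quadratic in $\pr g$. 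The harmonic condition $V^k\equiv 0$ kills the middle pair of terms. Combining with Step~1 (so that $g^{ab}\pr_a\pr_b g_{ij} = \triangle_g g_{ij}$) yields
\begin{align*}
\tfrac12 \triangle_g g_{ij} + \tfrac12 \mathcal{N}_{ij}(g,\pr g) \;=\; -R_{ij}.
\end{align*}
The remaining task is to recognise $\tfrac12 \mathcal{N}_{ij}$ as the expression $Q_{ij}$ appearing in the statement. This is done by a direct manipulation of the quadratic terms using $(\mathrm{Hess}\,x^k)_{ab}=-\Gamma^k_{ab}$, so that
\begin{align*}
g\bigl(\mathrm{Hess}\,x^k,\mathrm{Hess}\,x^l\bigr) g_{ik} g_{lj} \;=\; g^{ac}g^{bd}\Gamma^k_{ab}\Gamma^l_{cd}\,g_{ik}g_{lj},
\end{align*}
together with the identity $\pr_a g^{kl} = -g^{km}g^{ln}\pr_a g_{mn}$ for the other term. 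This is the main (but purely algebraic) computational step.

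\textbf{Step 3 (formula for $g^{ij}$).} Differentiating $g^{il}g_{lj}=\de^i_j$ twice gives
\begin{align*}
\triangle_g g^{il}\cdot g_{lj} \;=\; -\,2\,g^{ab}\pr_a g^{il}\pr_b g_{lj} \,-\, g^{il}\triangle_g g_{lj},
\end{align*}
by Step~1. Substituting the $g_{lj}$-formula from Step~2 and contracting with $g^{jm}$ to raise the index, most terms reorganise using $\pr g^{-1} = -g^{-1}(\pr g)g^{-1}$ so as to cancel the $\pr g\cdot \pr g^{-1}$ term. What remains is exactly $\RRRic^{ij} + Q^{ij}$ with $Q^{ij}=g(\mathrm{Hess}\,x^i,\mathrm{Hess}\,x^j)$, giving the second bullet.

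\textbf{Step 4 (formula for $\gd_{AB}$).} The boundary $(\ubp(x,r),\gd)$ is a $2$-dimensional Riemannian manifold, hence its Ricci tensor satisfies $\RRRic[\gd]_{AB}=K\gd_{AB}$. The coordinates $(x^1,x^2)$ restrict to boundary harmonic coordinates for $\gd$ by definition, so Step~2 applies verbatim on $\ubp(x,r)$ with $\gd$ in place of $g$, yielding $\tfrac12\Ld_\gd \gd_{AB}+\Qdd_{AB}=-K\gd_{AB}$.

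The main obstacle, as usual with the DeTurck--Kazdan identity, is the bookkeeping in Step~2 that reshuffles the quadratic-in-$\pr g$ terms coming from $\pr\Gamma$ and $\Gamma\cdot\Gamma$ into the geometric form $-g(\nab g_{ik},\nab g^{kl})g_{lj}+g(\mathrm{Hess}\,x^k,\mathrm{Hess}\,x^l)g_{ik}g_{lj}$. Everything else is either a pointwise coordinate identity (Step~1), a consequence of Step~2 via Leibniz (Step~3), or an unchanged transcription to two dimensions (Step~4).
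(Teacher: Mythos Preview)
Your proof is correct and follows exactly the standard route the paper has in mind: the paper does not give a proof of this lemma but refers the reader to Lemma~11.2.6 in \cite{Petersen}, which is precisely the DeTurck--Kazdan identity in harmonic coordinates that you reproduce in Steps~1--2, with Steps~3--4 being the natural adaptations to the inverse metric and to the $2$-dimensional boundary.
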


The quadratic non-linearities $\Qdd_{AB}\approx \prd \gd \prd \gd$ and $Q_{ij}, Q^{ij} \approx \pr g \pr g$ satisfy the following properties. The proof follows by standard Sobolev embeddings and product estimates (see Lemmas \ref{sobolevdomainlemma} and \ref{lemmaMoserEstimates}), and is left to the reader.
\begin{lemma} \label{LemmaQConvergenceEstimate} \label{Qcalcul} Let $\Om^2 \subset \RRR^2$ and $\Om^3 \subset \RRR^3$ be bounded smooth domains. Then the following holds.
\begin{itemize}
\item Let $(\gd_n)_{n \geq0}$ and $\gd$ be smooth Riemannian metrics on $\Om^2$. If
\begin{align*}
\gd_n \rightharpoonup \gd \text{ in } H^{3/2}(\Om^2) \text{ as } n \to \infty,
\end{align*}
then
\begin{align*}
\Vert \Qdd_{AB}(\gd_n) - \Qdd_{AB}(\gd) \Vert_{H^{-1/2}(\Om^2)} \to 0 \text{ as } n \to \infty.
\end{align*}

\item Let $(g_n)_{n \geq0}$ and $g$ be smooth Riemannian metrics on $\Om^3$. If
\begin{align*}
g_n \rightharpoonup g \text{ in } H^{2}(\Om^3) \text{ as } n \to \infty,
\end{align*}
then
\begin{align*}
\Vert Q_{AB}(g_n) - Q_{AB}(g) \Vert_{L^2(\Om^3)} \to 0 \text{ as } n \to \infty.
\end{align*}
\item There is $\varep>0$ small such that if 
\begin{align*}
\Vert \gd-e \Vert_{\HH^{3/2}(\Om^2)} < \varep,
\end{align*}
then for every integer $m>0$ there is a constant $C_m>0$ such that
\begin{align*}
\Vert \prd^{m-1} \Qdd_{AB} \Vert_{H^{1/2}(\Om^2)} \lesssim& \Vert \gd-e \Vert_{\HH^{3/2}(\Om^2)} \Vert \prd^{m+1} \gd \Vert_{\HH^{1/2}(\Om^2)} + \Vert \gd \Vert_{\HH^{m+1/2}(\Om^2)} \\
&+ C_m \Vert \gd-e \Vert_{\HH^{3/2}(\Om^2)}.
\end{align*}
\item There is $\varep>0$ small such that if 
\begin{align*}
\Vert g-e \Vert_{\HH^{2}(\Om^3)} < \varep,
\end{align*}
then for every integer $m>0$ there is a constant $C_m>0$ such that
\begin{align*}
\Vert \pr^{m} Q_{AB} \Vert_{L^2(\Om^3)} \lesssim& \Vert g-e \Vert_{\HH^{2}(\Om^3)} \Vert \pr^{m+2} g \Vert_{\LL^2(\Om^3)} + \Vert g \Vert_{\HH^{m+1}(\Om^3)} \\
&+ C_m \Vert g-e \Vert_{\HH^{2}(\Om^3)}.
\end{align*}
\end{itemize}
\end{lemma}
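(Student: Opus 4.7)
The lemma consists of two convergence statements (parts (1)--(2)) and two quantitative regularity estimates (parts (3)--(4)), all built from the same schematic nonlinearities $\Qdd_{AB} \sim \gd (\prd \gd)^2$ and $Q_{ij} \sim g (\pr g)^2$. The common strategy would be to expand differences or high-order derivatives by the Leibniz rule, and then combine the compact Sobolev embeddings of Lemma \ref{sobolevdomainlemma} with the product estimates of Lemma \ref{lemmaMoserEstimates}.

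For part (1), the first step is the algebraic decomposition
\begin{align*}
\Qdd(\gd_n) - \Qdd(\gd) = (\gd_n - \gd)(\prd \gd_n)^2 + \gd \, \prd(\gd_n - \gd) \, \prd(\gd_n + \gd) + (\text{similar trilinear terms}).
\end{align*}
The embedding $H^{1/2}(\Om^2) \hookrightarrow L^4(\Om^2)$ of Lemma \ref{sobolevdomainlemma} dualizes to $L^{4/3}(\Om^2) \hookrightarrow H^{-1/2}(\Om^2)$, so it suffices to estimate each summand in $L^{4/3}$. The compact embedding $H^{3/2}(\Om^2) \hookrightarrow H^{1/2}(\Om^2)$ upgrades $\gd_n \rightharpoonup \gd$ in $H^{3/2}$ to strong convergence in $H^{1/2}$, hence in $L^4$, and the companion compactness $H^{3/2} \hookrightarrow H^1$ gives $\prd \gd_n \to \prd \gd$ strongly in $L^2$. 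H\"older pairings $L^4 \cdot L^4 \cdot L^4 \hookrightarrow L^{4/3}$ for the first summand and $L^\infty \cdot L^2 \cdot L^4 \hookrightarrow L^{4/3}$ for the second (the $L^\infty$-bound on $\gd$ supplied by the standard Sobolev embedding $H^{3/2}(\Om^2) \hookrightarrow L^\infty(\Om^2)$) then drive each term to zero in $L^{4/3}$, and therefore in $H^{-1/2}$.

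Part (2) follows the same template in the easier target space $L^2(\Om^3)$. The compact embeddings $H^2(\Om^3) \hookrightarrow W^{1,4}(\Om^3)$ and $H^2(\Om^3) \hookrightarrow C^{0,\a}(\overline{\Om^3})$ from Lemma \ref{sobolevdomainlemma} supply $\pr g_n \to \pr g$ strongly in $L^4$ and $g_n \to g$ strongly in $L^\infty$, and a single H\"older estimate $L^\infty \cdot L^4 \cdot L^4 \hookrightarrow L^2$ applied to the analogous decomposition closes the argument.

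For parts (3) and (4), I would expand $\prd^{m-1}\Qdd_{AB}$ and $\pr^m Q_{ij}$ by the Leibniz rule into finite sums of trilinear terms of the form $\prd^a \gd \cdot \prd^{b+1}\gd \cdot \prd^{c+1}\gd$ with $a+b+c=m-1$, together with analogous contributions from the inverse-metric factors in $\Qdd_{AB}$ and $Q_{ij}$. Iterated application of the product estimates of Lemma \ref{lemmaMoserEstimates} then places the top-order factor in $\HH^{1/2}(\Om^2)$ or $\LL^2(\Om^3)$, where it is controlled by $\|\prd^{m+1}\gd\|_{\HH^{1/2}}$ or $\|\pr^{m+2}g\|_{\LL^2}$, while the coefficient factors are absorbed using the smallness hypothesis $\|\gd-e\|_{\HH^{3/2}}<\varep$ or $\|g-e\|_{\HH^2}<\varep$; the middle-range terms in which no factor carries the top derivative generate the $\|\gd\|_{\HH^{m+1/2}}$ and $\|g\|_{\HH^{m+1}}$ summands, and all combinatorial and low-order Moser constants are collected into $C_m$. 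I expect the main technical point to be part (1), where the $H^{-1/2}$ target has no direct product estimate available in Lemma \ref{lemmaMoserEstimates} and must instead be reached through the $L^{4/3}$ embedding, relying crucially on the strong $L^2$-convergence of $\prd \gd_n$ furnished by the compactness $H^{3/2} \hookrightarrow H^1$.
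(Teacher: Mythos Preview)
Your proposal is correct and follows precisely the approach the paper indicates: the paper's own proof of this lemma is simply the one-line remark that it ``follows by standard Sobolev embeddings and product estimates (see Lemmas \ref{sobolevdomainlemma} and \ref{lemmaMoserEstimates}), and is left to the reader.'' Your write-up is a faithful and correct unpacking of that sentence, including the key point in part (1) of passing through $L^{4/3}\hookrightarrow H^{-1/2}$ by duality and using the compact embeddings of $H^{3/2}(\Om^2)$ to upgrade weak convergence to the strong convergences needed for the trilinear terms.
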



In boundary harmonic coordinates $(x^1,x^2,x^3)$ on $B^+(x,r)$, we can express $g$ as
\begin{align} \label{eaexpresseion98976}
g = a^2 \left(dx^3\right)^2 + \gd_{AB} \left( \be^A dx^3 + dx^A \right) \left( \be^B dx^3 + dx^B \right).
\end{align}
with
\begin{itemize}
\item the lapse function $a>0$, 
\item the induced Riemannian metric $\gd$ on level sets $\{ x^3 = \mathrm{const.} \}$,
\item the shift vector $\be$, tangent to $\{ x^3 = \mathrm{const.}\}$.
\end{itemize}
The outward-pointing\footnote{The outward-pointing unit normal has by definition negative $\pr_{x^3}$-component.} unit normal to the level sets $\{ x^3 = \mathrm{const.} \}$ is given by
\begin{align} \label{unitnormal}
N := -\frac{1}{a} \pr_3 + \frac{1}{a} \be^A \pr_A = - a \nab x^3.
\end{align}
It holds that $\det g = a^2 \det \gd$ and the components $g^{ij}$ of the metric inverse are given by
\begin{align} \label{invcompexpr143}
g^{AB} = \gd^{AB} + \frac{\be^A \be^B}{ a^2},
g^{A3} = -\frac{\be^A}{a^2}, 
g^{33} = \frac{1}{a^2}.
\end{align}
The second fundamental form\footnote{Here we use the sign convention $\Theta(X,Y) := - g(X,\nab_Y N)$.} on $\ubp(x,r)$ is given in general coordinates by
\begin{align} \label{secondformexpr3}
\Th_{AB}= \frac{1}{2a} \pr_3 (g_{AB})  - \frac{1}{2a}\left( \Lied_{\be} \gd \right)_{AB},
\end{align}
where $\Lied$ denotes the induced Lie derivative on $\ubp(x,r)$.

\begin{lemma} \label{NeumannHarmonic} Let $g$ be a Riemannian metric in boundary harmonic coordinates on $B^+(x,r)$. Let $N$ and $\Th$ denote the outward-pointing unit normal and the second fundamental form of $\ubp(x,r)$, respectively. Then it holds that on $\ubp(x,r)$
\begin{align*} 
N \left( g^{33} \right) &= 2 \tr \Th g^{33},  \\
N \left( g^{3A} \right) &=  \tr \Th g^{3A} - \half \frac{1}{\sqrt{g^{33}}} g^{Am} \pr_m g^{33}. 
\end{align*}
\end{lemma}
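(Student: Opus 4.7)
The plan is to derive both identities from the metric decomposition \eqref{eaexpresseion98976}, applying the bulk harmonic equations $\triangle_g x^j = 0$ in divergence form $\pr_i(\sqrt{|g|}g^{ij})=0$ together with the intrinsic boundary harmonicity $\Ld_\gd x^A=0$ and the trace of \eqref{secondformexpr3}. A single scalar identity $N(a) = -a\,\tr\Th$ will unlock the first claim directly; a second scalar identity, extracted from $\triangle_g x^A = 0$, will give the second.

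First I would establish $N(a) = -a\,\tr\Th$. The equation $\triangle_g x^3 = 0$ reads, in divergence form, $\pr_3(\sqrt{|g|}g^{33}) + \pr_A(\sqrt{|g|}g^{3A}) = 0$. Substituting $\sqrt{|g|}=a\sqrt{|\gd|}$, $g^{33}=1/a^2$ and $g^{3A}=-\be^A/a^2$, this becomes $\pr_3(\sqrt{|\gd|}/a) = \pr_A(\sqrt{|\gd|}\be^A/a)$. Dividing by $\sqrt{|\gd|}/a$ and using \eqref{unitnormal} yields $\pr_3\ln\sqrt{|\gd|} - \Divd\be = \tfrac{1}{a}(\pr_3 a - \be^A\pr_A a) = -N(a)$. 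Combined with the trace of \eqref{secondformexpr3}, namely $\tr\Th = \tfrac{1}{a}(\pr_3\ln\sqrt{|\gd|} - \Divd\be)$, this gives $N(a) = -a\,\tr\Th$. The first identity of the lemma then follows immediately by the chain rule: $N(g^{33}) = N(1/a^2) = -\tfrac{2}{a^3}N(a) = \tfrac{2\,\tr\Th}{a^2} = 2\,\tr\Th\, g^{33}$.

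For the second identity, a direct computation from \eqref{unitnormal} and $g^{3A}=-\be^A/a^2$, using the identity $\pr_3 a - \be^B\pr_B a = a^2\,\tr\Th$ just established, gives
\[
N(g^{3A}) = \frac{1}{a^3}\bigl(\pr_3\be^A - \be^B\pr_B\be^A\bigr) - \frac{2\be^A\,\tr\Th}{a^2}.
\]
On the other hand, rewriting $\pr_3 = -aN + \be^B\pr_B$ as operators on functions yields the algebraic identity $g^{Am}\pr_m g^{33} = \gd^{AB}\pr_B g^{33} - a\,g^{A3}N(g^{33})$, after the $g^{A3}\be^B$ and $\be^A\be^B/a^2$ contributions cancel against $g^{AB}$. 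Substituting this into the right-hand side of the claim, using $N(g^{33})=2\,\tr\Th\, g^{33}$ and $a^2 g^{33}=1$, reduces the claim to the single scalar identity
\[
\pr_3\be^A - \be^B\pr_B\be^A = a\,\gd^{AB}\pr_B a.
\]

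The main obstacle is proving this last identity, which is where both the bulk harmonicity of $x^A$ and the boundary harmonicity enter. My plan is to expand $\triangle_g x^A = 0$ in divergence form, $\pr_3(\sqrt{|g|}g^{3A}) + \pr_B(\sqrt{|g|}g^{BA}) = 0$, substituting $\sqrt{|g|}g^{3A} = -\sqrt{|\gd|}\be^A/a$ and $\sqrt{|g|}g^{BA} = a\sqrt{|\gd|}\gd^{BA} + \sqrt{|\gd|}\be^A\be^B/a$. The key simplification is that $\Ld_\gd x^A = 0$ is equivalent to $\pr_B(\sqrt{|\gd|}\gd^{BA})=0$, so on the boundary $\pr_B(a\sqrt{|\gd|}\gd^{BA}) = \sqrt{|\gd|}\gd^{BA}\pr_B a$. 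Expanding the remaining derivatives produces several terms proportional to $\be^A$, which all cancel upon invoking $N(a) = -a\,\tr\Th$ and $\tr\Th = \tfrac{1}{a}(\pr_3\ln\sqrt{|\gd|} - \Divd\be)$, leaving precisely $\pr_3\be^A - \be^B\pr_B\be^A = a\,\gd^{AB}\pr_B a$. The only delicate part is the bookkeeping of $\be^A$-contributions; no further geometric input beyond the harmonicity assumptions and \eqref{secondformexpr3} is needed.
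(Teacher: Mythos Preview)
Your proof is correct, but it follows a different route from the paper's. The paper uses the normal--tangential decomposition of the Laplacian,
\[
\triangle_g f = N(N(f)) - \tr\Th\, N(f) + \Ld_\gd f + a^{-1}\gd(\Nd a, \Nd f),
\]
applied to $f = x^j$: since $\triangle_g x^j = 0$ and $\Ld_\gd x^j = 0$ on the boundary, this immediately gives an algebraic relation among $N(N(x^j))$, $\tr\Th\, N(x^j)$, and the tangential gradient term. Computing $N(N(x^j))$ from $N(x^j) = -a g^{3j}$ then delivers both identities in a few lines.

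Your approach instead unpacks the harmonicity conditions in divergence form $\pr_i(\sqrt{|g|}g^{ij}) = 0$ and works entirely through the $(a,\be,\gd)$ decomposition, first isolating the scalar identity $N(a) = -a\,\tr\Th$ and then reducing the second claim to the auxiliary identity $\pr_3\be^A - \be^B\pr_B\be^A = a\,\gd^{AB}\pr_B a$. The computation is longer and the cancellation of the $\be^A$-terms is more delicate, but it has the virtue of being fully coordinate-explicit and not requiring one to recall (or derive) the hypersurface decomposition of $\triangle_g$. The paper's argument is more geometric and packages the bookkeeping into that single decomposition formula, which is why it is shorter; your argument makes every use of the harmonicity assumptions visible at the level of components.
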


\begin{proof} In general, for smooth functions $f$,
\begin{align*}
\triangle_g f = N(N(f)) - \tr \Th N(f) + \Ld_\gd f + a^{-1} g(\Nd a, \Nd f).
\end{align*}

By assumption we have for $j=1,2,3$ that $\triangle x^j =0$ and $\Ld x^j =0$. Hence
\begin{align} \label{essumcons542}
0 = \triangle x^j = N(N(x^j)) - \tr \Th N(x^j) + a^{-1} g(\Nd a, \Nd x^j).
\end{align}
By \eqref{unitnormal} and \eqref{invcompexpr143} it follows for $A=1,2$ that
\begin{align} \begin{aligned}
N(N(x^3)) &= -N(\sqrt{g^{33}}) \\
&= -\half a N(g^{33}), \\
N(N(x^A)) &= -N\left(a g^{3A} \right) \\
&= -N(a) g^{3A}-a N(g^{3A})\\
&= -N(a) g(\nab x^3, \nab x^A) - a N(g^{3A}) \\
&= a^{-1} g(\nab x^A, N) N(a) - a N(g^{3A}),
\end{aligned} \label{star1} \end{align}
and moreover
\begin{align}
\tr \Th N(x^j) = -\tr \Th a g^{3j}. \label{starstar1}
\end{align}

By \eqref{star1}, \eqref{starstar1}, \eqref{essumcons542} and using that $\Nd x^3 = 0$, we get
\begin{align*}
N(g^{33}) =& 2 \tr \Th g^{33}, \\
N(g^{3A}) =&  a^{-2} \left(  g(\nab x^A, N) N(a)+ g( \Nd x^A, \Nd a) \right) + \tr \Th g^{3A} \\
=&  a^{-2} (\nab x^A)(a) + \tr \Th g^{3A} \\
=& -\frac{1}{2\sqrt{g^{33}}} (\nab x^A)(g^{33})+ \tr \Th g^{3A}.
\end{align*}
This finishes the proof of Lemma \ref{NeumannHarmonic}. \end{proof}


The next lemma shows that bounds on the metric components $g^{33},g^{3A},g_{AB}$ imply bounds for all metric components $g_{ij}$.
\begin{lemma}[Control of all metric components] \label{upstairscontrol}
Let $\Om \subset \RRR^3$ be a smooth domain and let $g$ be a smooth Riemannian metric on $\Om$. There is $\varep>0$ such that if
\begin{align*}
\Vert g-e \Vert_{\HH^2(\Om)} < \varep,
\end{align*}
then for every integer $m\geq0$, there is a constant $C_{m}>0$ such that
\begin{align} \label{eqfullcontrol13242}
\Vert g \Vert_{\HH^m(\Om)} \lesssim  \sum\limits_{A,B=1,2} ( \Vert g_{AB} \Vert_{\HH^m(\Om)} +\Vert g^{33} \Vert_{\HH^m(\Om)} + \Vert g^{3A} \Vert_{\HH^m(\Om)} )+ C_{m} \Vert g-e \Vert_{\HH^2(\Om)}.
\end{align}
\end{lemma}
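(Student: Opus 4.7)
The plan is to reduce the lemma to algebraic identities relating the covariant components $g_{3A}, g_{33}$ to the contravariant components $g^{33}, g^{3A}$ and the tangential components $g_{AB}$, and then to propagate $H^m$ control through these identities by means of the Moser-type product estimate in Lemma \ref{lemmaMoserEstimates}.

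First, using the decomposition \eqref{eaexpresseion98976} together with the inverse formulas \eqref{invcompexpr143}, the lapse and shift read $a^2 = 1/g^{33}$ and $\be^A = -g^{A3}/g^{33}$, while the induced metric on $\{x^3=\mathrm{const.}\}$ satisfies $\gd_{AB}=g_{AB}$. Expanding \eqref{eaexpresseion98976} then gives the explicit identities
$$g_{3A} \;=\; -\,\frac{g_{AB}\,g^{B3}}{g^{33}}, \qquad g_{33} \;=\; \frac{1}{g^{33}} \;+\; \frac{g_{AB}\,g^{A3}\,g^{B3}}{(g^{33})^2}.$$
Together with the trivial identity $g_{AB}=g_{AB}$, this reduces \eqref{eqfullcontrol13242} to controlling $1/g^{33}$ and these rational products in $H^m(\Om)$ by $H^m$-norms of $g_{AB}$, $g^{33}$, $g^{3A}$ modulo a remainder of size $C_m\Vert g-e\Vert_{\HH^2(\Om)}$.

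The smallness hypothesis $\Vert g-e \Vert_{\HH^2(\Om)}<\varep$ combined with the embedding $H^2(\Om)\hookrightarrow C^0(\ol{\Om})$ from Lemma \ref{sobolevdomainlemma} yields a uniform pointwise lower bound $g^{33}\geq 1/2$ on $\ol\Om$, so $1/g^{33}$ and $1/(g^{33})^2$ are well defined and bounded. Next I would establish by induction on $m$ the auxiliary estimate
$$\Vert 1/g^{33} \Vert_{H^m(\Om)} \;\lesssim\; \Vert g^{33} \Vert_{H^m(\Om)} + C_m \Vert g^{33}-1 \Vert_{H^2(\Om)},$$
starting from $1/g^{33} = 1 - (g^{33}-1)/g^{33}$. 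Every derivative of $1/g^{33}$ is a sum of products of derivatives of $g^{33}$ divided by powers of $g^{33}$, each of which is controlled in $L^2(\Om)$ by repeated application of the $H^m(\Om^3)$ Moser estimate of Lemma \ref{lemmaMoserEstimates} together with the inductive hypothesis and the pointwise bound. The same argument gives the analogous bound for $1/(g^{33})^2$.

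Finally, I would insert these bounds into the formulas for $g_{3A}$ and $g_{33}$ and estimate the resulting products in $H^m(\Om)$ one factor at a time via Lemma \ref{lemmaMoserEstimates}. In every product at least one factor is close to its Euclidean value in $\HH^2(\Om)$ by the smallness hypothesis, which produces exactly the structure required on the right-hand side of \eqref{eqfullcontrol13242}: a universal constant times the sum of $H^m$-norms of $g_{AB}, g^{33}, g^{3A}$, plus a remainder of size $C_m\Vert g-e\Vert_{\HH^2(\Om)}$ absorbing all cross terms that are only controlled in $H^2$. The only genuine technical point is the Moser-type handling of the rational nonlinearity $1/g^{33}$; once the auxiliary induction above is in place, the remainder of the proof is algebra.
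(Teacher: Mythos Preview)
Your proposal is correct and follows essentially the same approach as the paper: both derive the algebraic identities expressing $g_{3A}$ and $g_{33}$ in terms of $g_{AB}$, $g^{33}$, $g^{3A}$ via the lapse--shift decomposition \eqref{eaexpresseion98976}--\eqref{invcompexpr143} (the paper writes the equivalent formulas $g_{3A} = g_{AC}\, g^{3C}/g^{33}$ and $g_{33} = (1 - g_{3A} g^{3A})/g^{33}$ from $g_{3k} g^{k3}=1$), and then invoke the product estimates of Lemma~\ref{lemmaMoserEstimates}. You have simply filled in the details the paper leaves to the reader, in particular the inductive handling of $1/g^{33}$.
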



\begin{proof} It suffices to control the components $g_{3A}$ and $g_{33}$. By \eqref{eaexpresseion98976}, \eqref{invcompexpr143}, and the general property $g_{3k} g^{k3} = 1$,
\begin{align*}
g_{3A} = g_{AC} \frac{g^{3C}}{g^{33}}, \, \, g_{33} = \frac{1}{g^{33}}\left(1- g_{3A}g^{3A} \right).
\end{align*}
From this, the product estimates of Lemma \ref{lemmaMoserEstimates} imply \eqref{eqfullcontrol13242}. Details are left to the reader.
 \end{proof}


\subsection{The trace estimate for the Gauss curvature $K$} \label{SectionTraceEstimateForK}

Let $g$ be a Riemannian metric in boundary harmonic coordinates on $B^+(x,r)$ such that 
\begin{align*}
\Vert g-e \Vert_{\HH^2(B^+(x,r))} < \varep
\end{align*}
for a small constant $\varep>0$.

In this section, we prove that if $\varep>0$ is sufficiently small, then for all $0<r'<r$ it holds that
\begin{align} \label{Kestimate}
\Vert K \Vert_{H^{-1/2}(\ubp(x,r'))} \leq C_{r',r} \Big( \Vert \RRRic \Vert_{L^2(B^+(x,r))} + \Vert \Th \Vert_{{L^4}(\ubp(x,r))} \Big).
\end{align}
We remark that such a trace estimate was already proved in \cite{KlRod} in Besov spaces, see also \cite{J3}. For the convenience of the reader, we give here a proof of \eqref{Kestimate} without Besov spaces.

By the twice contracted Gauss equation 
\begin{align*}
2K = (\tr \Th)^2 - \vert \Th \vert^2 + \mathrm{R}_{scal} - 2 \RRRic(N,N),
\end{align*}
and Lemmas \ref{sobolevdomainlemma} and \ref{sobolevtracedomain} it follows that for $\varep>0$ sufficiently small,
\begin{align*}
\Vert K \Vert_{H^{-1/2}(\ubp(x,r'))} \lesssim \Vert \mathrm{R}_{scal} - 2 \RRRic(N,N) \Vert_{H^{-1/2}(\ubp(x,r'))} + \Vert \Th \Vert_{{L^4}(\ubp(x,r))}.
\end{align*}

Therefore, the proof of \eqref{Kestimate} follows from the next proposition.
\begin{proposition}[Trace estimate] \label{PropositionTraceEstimate}
Let $g$ be a Riemannian metric in boundary harmonic coordinates on $B^+(x,r)$. There is an $\varep>0$ small such that if
\begin{align*}
\Vert g-e \Vert_{\HH^2(B^+(x,r))} < \varep,
\end{align*}
then it holds that for all $0<r'<r$,
\begin{align*}
\Vert \mathrm{R}_{scal} - 2 \RRRic(N,N) \Vert_{H^{-1/2}(\ubp(x,r'))} \leq C_{r',r} \Vert \RRRic \Vert_{L^2(B^+(x,r))}.
\end{align*}
\end{proposition}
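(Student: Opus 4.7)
The strategy is to exploit the divergence-free property of the Einstein tensor $G^{ij}:=\RRRic^{ij}-\tfrac{1}{2}\mathrm{R}_{scal}\,g^{ij}$. The starting point is the pointwise identity $\mathrm{R}_{scal}-2\RRRic(N,N)=-2G(N,N)$ on $\ubp(x,r)$, which follows from $g(N,N)=1$. Since the paper defines $H^{-1/2}(\ubp(x,r'))$ as the dual of $H^{1/2}_0(\ubp(x,r'))$, it suffices to bound, for every smooth $\phi$ compactly supported in $\ubp(x,r')$,
\begin{align*}
\left|\int_{\ubp(x,r')} G(N,N)\,\phi\,d\sigma\right| \lesssim_{r',r} \|\RRRic\|_{L^2(B^+(x,r))}\,\|\phi\|_{H^{1/2}(\ubp(x,r'))}.
\end{align*}

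First I would construct two extensions. The compact support of $\phi$ allows extension by zero to an $H^{1/2}(\ubp(x,r))$ function of controlled norm, which by Lemma~\ref{sobolevtracedomain} applied on the smooth domain $\Om_{x,r',r}$ of \eqref{defomegq3424145} (see Remark~\ref{remark453546}) lifts to $\Phi\in H^1(B^+(x,r))$ satisfying $\tau(\Phi)|_{\ubp(x,r)}=\phi$, $\mathrm{supp}\,\Phi\subset\subset\Om_{x,r',r}$ (so $\Phi$ vanishes near $S^+(x,r)$), and $\|\Phi\|_{H^1}\leq C_{r',r}\|\phi\|_{H^{1/2}}$. The outward unit normal $N$ is extended to $B^+(x,r)$ by $\widetilde{N}_i:=-\delta_i^3/\sqrt{g^{33}}$, which coincides with $N$ on $\ubp(x,r)$ by \eqref{unitnormal} and \eqref{invcompexpr143}; for $\varep>0$ small, the positive lower bound on $g^{33}$ together with the embedding $H^2\hookrightarrow W^{1,4}$ in three dimensions ensures $\widetilde{N}\in\LL^\infty\cap\WW^{1,4}(B^+(x,r))$ with bounds depending only on the $\HH^2$-smallness of $g-e$.

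Next, I apply the divergence theorem on $B^+(x,r)$ to the vector field $V^j:=G^{ij}\widetilde{N}_i\Phi$. Because $\Phi$ vanishes near $S^+(x,r)$ and $\tau(\Phi)$ is supported in $\ubp(x,r')$, only the face $\ubp(x,r')$ contributes, and on that face the outward unit normal to $B^+(x,r)$ equals $N$. Using the smoothness of $g$ to invoke the twice-contracted second Bianchi identity $\nab_jG^{ij}=0$, one gets
\begin{align*}
\int_{\ubp(x,r')} G(N,N)\,\phi\,d\sigma = \int_{B^+(x,r)} G^{ij}\,\nab_j\widetilde{N}_i\,\Phi\,d\mathrm{vol}_g + \int_{B^+(x,r)} G^{ij}\,\widetilde{N}_i\,\nab_j\Phi\,d\mathrm{vol}_g.
\end{align*}
The smallness $\|g-e\|_{\HH^2}<\varep$ gives $\|G\|_{\LL^2}\lesssim\|\RRRic\|_{L^2}$, and H\"older's inequality then bounds the second integral by $\|G\|_{\LL^2}\|\widetilde{N}\|_{\LL^\infty}\|\nab\Phi\|_{L^2}$ and the first by $\|G\|_{\LL^2}\|\nab\widetilde{N}\|_{\LL^4}\|\Phi\|_{L^4}$; the three-dimensional embedding $H^1\hookrightarrow L^4$ together with $\|\Phi\|_{H^1}\lesssim_{r',r}\|\phi\|_{H^{1/2}}$ closes the estimate.

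The main obstacle is the algebraic identification at the very start. Any attempt to trace $\mathrm{R}_{scal}$ or $\RRRic(N,N)$ individually in $H^{-1/2}$ must fail, since each expression contains second derivatives of $g$ and therefore only lies in $H^{-1}$ in the bulk, which does not admit a trace. The gain of half a derivative comes exclusively from the cancellation in the Einstein tensor combined with its contracted Bianchi identity, which converts the boundary $H^{-1/2}$ pairing into a bulk pairing with an $H^1$-test function. There is no slack in the H\"older exponents, which is why the extension of $N$ must be chosen to lie in $W^{1,4}$ rather than a weaker space and why the proof relies critically on the three-dimensional embedding $H^1\hookrightarrow L^4$.
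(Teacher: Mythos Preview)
Your proof is correct and takes a genuinely different route from the paper's. Both arguments ultimately rest on the contracted Bianchi identity to gain the half-derivative at the boundary, but the implementations diverge. The paper works directly with the $H^{-1/2}$ norm: it introduces the tangential Fourier multiplier $\langle\prd\rangle^{-1}$, applies the fundamental theorem of calculus in the $x^3$-direction to $\|\chi^2(\mathrm{R}_{scal}-2\RRRic_{NN})\|_{H^{-1/2}(\RRR^2)}^2$, and then invokes the pointwise identity
\[
N(\mathrm{R}_{scal}-2\RRRic_{NN}) = 2\Divd(\RRRic_{\cdot\,N}) - 2\tr\Th\,\RRRic_{NN} + 2\Th^{AB}\RRRic_{AB} + 4\RRRic_{AN}(\nab_N N)^A,
\]
so that the dangerous normal derivative becomes a tangential divergence (absorbed by $\langle\prd\rangle^{-1}$) plus zeroth-order terms in $\RRRic$. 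Your argument instead packages the same cancellation by recognising $\mathrm{R}_{scal}-2\RRRic(N,N)=-2G(N,N)$ and testing against $\phi\in C_c^\infty$: after lifting $\phi$ to an $H^1$ bulk function and extending $N$ by $\widetilde N_i=-\delta_i^3/\sqrt{g^{33}}$, a single application of the divergence theorem and $\nab_j G^{ij}=0$ reduces the boundary pairing to two bulk integrals that close with $H^1\hookrightarrow L^4$ and $\widetilde N\in W^{1,4}$. Your approach is more geometric and avoids the Fourier machinery entirely; the paper's approach is closer in spirit to the Klainerman--Rodnianski trace estimates it cites and makes the role of the tangential divergence more explicit. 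Both yield the same dependence on $\|\RRRic\|_{L^2}$ with constants depending only on $r',r$ and the $\HH^2$-smallness of $g-e$.
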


In the rest of this section, we prove Proposition \ref{PropositionTraceEstimate}. The proof of this trace estimate is based on the identity
\begin{align} \begin{aligned}
&N \left( \mathrm{R}_{scal}-2\RRRic_{NN} \right) \\
=& \nab_N \mathrm{R}_{scal} -2 \nab_N \RRRic_{NN} + 4 \RRRic_{AN} (\nab_N N)^A \\
=& 2 \nab^A \RRRic_{AN} + 4 \RRRic_{AN} (\nab_N N)^A \\
=& 2 \Divd \left( \RRRic_{\cdot \,\, N} \right) - 2 \tr \Th \RRRic_{NN} +2 \Th^{AB} \RRRic_{AB}+ 4 \RRRic_{AN} (\nab_N N)^A.
\end{aligned} \label{deridentity} 
\end{align}

We note that \eqref{deridentity} follows from the twice contracted Bianchi identity
\begin{align*}
\nab^l \RRRic_{lj} = \half \nab_l \mathrm{R}_{scal}.
\end{align*}

Let $\chi := \chi_{x,r',r}$ be a smooth cut-off function from $B^+(x,r')$ to $B^+(x,r)$ as defined in \eqref{defchi}. By Lemma \ref{propprd} and the fundamental theorem of calculus,
\begin{align*} 
&\Vert \mathrm{R}_{scal}-2\RRRic_{NN} \Vert^2_{H^{-1/2}\left(\ubp(x,r')\right)} \\
\leq& \Vert \chi^2 (\mathrm{R}_{scal}-2\RRRic_{NN})\Vert_{H^{-1/2}(\RRR^2)}^2 \\
=& \int\limits_{\{x^3 = 0\} } \chi^2 (\mathrm{R}_{scal}-2\RRRic_{NN}) \langle \prd \rangle^{-1} (\chi^2 (\mathrm{R}_{scal}-2\RRRic_{NN})) dx^1dx^2 \\
=&\int\limits_{0}^r \pr_{3} \left( \, \int\limits_{\RRR^2} \chi^2 (\mathrm{R}_{scal}-2\RRRic_{NN}) \langle \prd \rangle^{-1} (\chi^2(\mathrm{R}_{scal}-2\RRRic_{NN})) dx^1dx^2  \right)dx^3 \\
=& 2 \int\limits_{0}^r   \int\limits_{\RRR^2} \chi^2 (\mathrm{R}_{scal}-2\RRRic_{NN}) \langle \prd \rangle^{-1} \left( \pr_{3} \Big( \chi^2 (\mathrm{R}_{scal}-2\RRRic_{NN}) \Big) \right) ,
\end{align*}

Expressing $\pr_{3} = a N+ \be$, see \eqref{unitnormal}, we get
\begin{align}\begin{aligned}
&\Vert \mathrm{R}_{scal}-2\RRRic_{NN} \Vert^2_{H^{-1/2}\left(\ubp(x,r')\right)} \\
\leq&  \int\limits_{0}^r  \int\limits_{\RRR^2}\chi^2(\mathrm{R}_{scal}-2\RRRic_{NN})  \langle \prd \rangle^{-1}\Big(\chi^2aN  (\Rscal-2\RRRic_{NN})  \Big) \\
&+ \int\limits_{0}^r  \int\limits_{\RRR^2} \chi^2(\mathrm{R}_{scal}-2\RRRic_{NN}) \langle \prd \rangle^{-1}\Big(\chi^2 \be (\Rscal-2\RRRic_{NN})  \Big)  \\
&+\int\limits_{0}^r  \int\limits_{\RRR^2} \chi^2(\mathrm{R}_{scal}-2\RRRic_{NN})  \langle \prd \rangle^{-1} \Big((\pr_{x^3}\chi^2)  (\mathrm{R}_{scal}-2\RRRic_{NN}) \Big) .
 \end{aligned} \label{Kconveq1} \end{align}

The first term on the right-hand side of \eqref{Kconveq1} equals by \eqref{deridentity},
\begin{align} \begin{aligned} 
&\int\limits_{0}^r  \int\limits_{\RRR^2} \chi^2 (\mathrm{R}_{scal}-2\RRRic_{NN})\langle \prd \rangle^{-1}\Big(\chi^2 a N  (\mathrm{R}_{scal}-2\RRRic_{NN})  \Big)   \\
=&  2\int\limits_{0}^r \int\limits_{\RRR^2}\chi^2 (\mathrm{R}_{scal}-2\RRRic_{NN}) \langle \prd \rangle^{-1}\Big( \chi^2 a \Divd R_{\cdot \,\, N}  \Big)   \\
&-2 \int\limits_{0}^r  \int\limits_{\RRR^2}\chi^2 (\mathrm{R}_{scal}-2\RRRic_{NN}) \langle \prd \rangle^{-1}\Big( \chi^2 a (\tr \Th \RRRic_{NN} + \Th^{AB} \RRRic_{AB} + \RRRic_{AN} (\nab_N N)^A) \Big) .
\end{aligned} \label{ImportantEQ2322}
\end{align}

The first term on the right-hand side of \eqref{ImportantEQ2322} is estimated by using that by Lemma \ref{propprd}, the definition $\Divd X = \gd^{ij} \prd_i X_j$ for vectorfields $X$ and standard product estimates, see Lemma \ref{lemmaMoserEstimates}, we have for $\varep>0$ sufficiently small,
\begin{align*} 
 \left\Vert \langle \prd \rangle^{-1}\Big( \chi^2 a \Divd \RRRic_{\cdot \,\, N}  \Big) \right\Vert_{L^2(\RRR^3)} \leq  C_{r',r} \Vert \chi \RRRic \Vert_{L^2(\RRR^2)},
\end{align*}
and therefore
\begin{align*}
&2\int\limits_{0}^r \int\limits_{\RRR^2}\chi^2 (\mathrm{R}_{scal}-2\RRRic_{NN}) \langle \prd \rangle^{-1}\Big( \chi^2 a \Divd R_{\cdot \,\, N}  \Big)   \\
\lesssim& 2 \int\limits_0^r \Vert \chi^2 (\Rscal - 2 \RRRic_{NN}) \Vert_{L^2(\RRR^2)} \Vert \chi \RRRic \Vert_{L^2(\RRR^2)} dx^3 \\
\leq & C_{r',r} \Vert \RRRic \Vert^2_{L^2(B^+(x,r))}.
\end{align*}

The other terms are also estimated by Lemmas \ref{propprd} and \ref{lemmaMoserEstimates}. Indeed, the second term of \eqref{Kconveq1} involves only tangential derivatives analogously as the first term in \eqref{ImportantEQ2322}, and the second term of \eqref{ImportantEQ2322} and third term of \eqref{Kconveq1} are lower order. Details are left to the reader. We conclude that for $\varep>0$ sufficiently small,
\begin{align*}
\Vert \mathrm{R}_{scal}-2\RRRic_{NN} \Vert^2_{H^{-1/2}\left(\ubp(x,r')\right)} \leq& C_{r',r} \Vert \RRRic \Vert^2_{L^2(B^+(x,r))}.
\end{align*}
This finishes the proof of Proposition \ref{PropositionTraceEstimate}, and hence the proof of  \eqref{Kestimate}.


\section{The precise version of the main result} \label{ExactStatement} The following is our main result.

\begin{theorem}[Existence of regular coordinates, version 2] \label{thm:higherintroexharm}
Let $(M,g)$ be a compact, complete Riemannian $3$-manifold with boundary such that 
\begin{align*}
&\Vert \RRRic \Vert_{ L^2(M)}< \infty, \, r_{vol}(M,1) > 0, \, \Vert \Th \Vert_{ L^4(\pr M)}<\infty.
\end{align*}
Then the following holds. 
\begin{enumerate} 
\item {\bf $L^2$ regularity.} There is $\varep_0>0$ such that for every $0< \varep< \varep_0$, there is a radius
\begin{align*}
r=r( \Vert \RRRic \Vert_{ L^2(M) },\Vert \Th \Vert_{ L^4(\pr M)},r_{vol}(M,1),  \varep) >0
\end{align*}
such that for every point $p \in M$, there is a boundary harmonic chart\footnote{That is, the coordinates $x^i:= \left( \varphi^{-1}\right)^i: U \to \RRR$ are boundary harmonic.}
\begin{align*}
\varphi: B^+(x,r) \to U \subset M
\end{align*} 
with $\varphi(x)=p$ in which, on $B^+(x,r)$,
\begin{align*}
(1-\varep) e_{ij} \leq g_{ij} &\leq (1+\varep) e_{ij},
\end{align*}
where $e_{ij}$ denotes the Euclidean metric. Moreover,
\begin{align*}
r^{-1/2} \Vert \pr g \Vert_{\LL^2(B^+(x,r))} + r^{1/2}\Vert \pr^2 g \Vert_{\LL^2(B^+(x,r))} <& \varep.
\end{align*}
\item {\bf Higher regularity.} Let $m\geq1$ be an integer. Assuming higher regularity of $\RRRic$, we further have the higher regularity estimate
\begin{align*}
\Vert g \Vert_{\HH^{m+2}(B^+(x,r))} \leq C_{r} \sum\limits_{i=0}^m \Vert \nab^{(i)} \RRRic \Vert_{L^2(M)}  + C_{m,r} \varep.
\end{align*}
\end{enumerate}
\end{theorem}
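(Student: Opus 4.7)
The plan is to prove part (1) by contradiction, following the Cheeger--Gromov compactness strategy adapted to manifolds with boundary, and then to prove part (2) by an elliptic bootstrap on the equations of Lemma \ref{lemmaharmexpr} together with the trace estimate \eqref{Kestimate}.

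\textbf{Part (1), contradiction setup.} Suppose, for contradiction, that there exist constants $\Lambda, \Theta_0 > 0$ and $\nu>0$, together with an $\varepsilon>0$, such that for every radius $r>0$ there is a Riemannian $3$-manifold $(M, g, p)$ with $\Vert \RRRic \Vert_{L^2(M)} \leq \Lambda$, $\Vert \Th \Vert_{L^4(\partial M)} \leq \Theta_0$, $r_{vol}(M,1) \geq \nu$, and a point $p \in M$ around which no boundary harmonic chart on a ball of radius $r$ satisfies the desired $H^2$-closeness to Euclidean. Extract a sequence $(M_i, g_i, p_i)$ with associated radii $r_i \to 0$, and \emph{rescale} by setting $\tilde g_i := r_i^{-2} g_i$, so that the relevant curvature quantities tend to zero in the appropriate norms (the $L^2$-norm of Ricci and $L^4$-norm of $\Th$ scale with positive powers of $r_i$), while $r_{vol}(\tilde M_i, 1) \geq \nu$.

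\textbf{Part (1), compactness and rigidity.} Apply the pre-compactness theorem for sequences of Riemannian $3$-manifolds with boundary developed in Section \ref{sec:setup} (Theorem \ref{thm:main1} with Corollary \ref{corollary:fundamentalharm}) to extract a subsequence converging in the Cheeger--Gromov sense, with limit $(\tilde M_\infty, \tilde g_\infty, \tilde p_\infty)$ locally in boundary harmonic charts with $\tilde g_{\infty} \in \HH^2_{loc}$. By lower semi-continuity and the scaling of the norms, the limit metric $\tilde g_\infty$ satisfies $\RRRic(\tilde g_\infty) = 0$ and $\Th(\partial \tilde M_\infty) = 0$. This is where the manifold doubling argument enters: form the double $\mathrm{D}(\tilde M_\infty)$ across the totally geodesic boundary; the resulting $H^2$-metric on $\mathrm{D}(\tilde M_\infty)$ is Ricci-flat (in the weak sense) and satisfies the lower volume radius bound, so standard $\varepsilon$-regularity for Ricci-flat metrics forces it to be locally isometric to flat Euclidean space. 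Consequently $(\tilde M_\infty, \tilde g_\infty)$ is locally isometric to the flat half-space $\HHH^+$. On the limit one can trivially construct boundary harmonic coordinates in which $\tilde g_\infty = e$ on a fixed-size ball; transporting these coordinates back along the convergence (via the elliptic stability of harmonic coordinates under $H^2$ perturbations proved in Section \ref{sec:ExistenceL2}) yields boundary harmonic coordinates on $\tilde M_i$ with $\Vert \tilde g_i - e \Vert_{\HH^2} < \varepsilon$ for $i$ large, contradicting the choice of the sequence. Undoing the rescaling produces the radius $r$ and the $H^2$-closeness estimate of part (1). The hardest step is the rigidity argument via doubling, since the low regularity $\HH^2_{loc}$ of the limit metric means classical Ricci-flat rigidity does not apply directly; one must check that the doubled metric has the correct weak regularity across the seam and that the elliptic equations for the metric components continue to hold across the boundary.

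\textbf{Part (2), higher regularity.} Once boundary harmonic coordinates with $\Vert g - e \Vert_{\HH^2(B^+(x,r))} < \varepsilon$ are fixed, bootstrap on the three elliptic systems of Lemma \ref{lemmaharmexpr}. First, the trace estimate \eqref{Kestimate} gives $\Vert K \Vert_{H^{m-1/2}(\ubp(x,r))} \lesssim \sum_{i\leq m}\Vert \nab^{(i)} \RRRic \Vert_{L^2}$ by induction, combining Proposition \ref{PropositionTraceEstimate} with the Gauss equation and the $H^{m+3/2}$-product estimate on $\Th^2$ from Lemma \ref{lemmaMoserEstimates}. Applying standard $2$-D elliptic regularity to
\begin{equation*}
\tfrac{1}{2}\Ld_\gd \gd_{AB} + \Qdd_{AB} = -K \gd_{AB}
\end{equation*}
on $\ubp(x,r)$ yields control of $\gd_{AB}$ in $H^{m+3/2}$. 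Next, apply Dirichlet elliptic regularity to the bulk equation
\begin{equation*}
\tfrac{1}{2}\triangle_g g_{AB} + Q_{AB} = -\RRRic_{AB}
\end{equation*}
with boundary data $\gd_{AB} \in H^{m+3/2}$, obtaining $g_{AB} \in \HH^{m+2}(B^+(x,r'))$. For the components $g^{33}$ and $g^{3A}$, apply Neumann elliptic regularity with the boundary data \eqref{nmdatafieto1} from Lemma \ref{NeumannHarmonic}, using the $L^4$/$H^{3/2}$-bounds on $\Th$ and the already-obtained $H^{m+2}$-bound on $\gd$. Finally, Lemma \ref{upstairscontrol} converts the bounds on $g_{AB}, g^{33}, g^{3A}$ into $\HH^{m+2}$-bounds on all components $g_{ij}$. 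The induction closes because the nonlinearities $Q_{ij}, \Qdd_{AB}$ obey the product estimates of Lemma \ref{Qcalcul}, in which the top-order term carries a factor of $\Vert g-e\Vert_{\HH^2}<\varepsilon$ and can thus be absorbed for $\varepsilon$ small.
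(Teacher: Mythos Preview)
Your proposal follows essentially the same strategy as the paper: contradiction via rescaling, Cheeger--Gromov compactness, rigidity of the limit by doubling, and then an elliptic bootstrap for the higher regularity. The overall architecture and the role of the trace estimate, the Dirichlet/Neumann splitting among the components $g_{AB}, g^{33}, g^{3A}$, and Lemma~\ref{upstairscontrol} are all correctly identified.

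Two points where the paper organises things differently from your sketch are worth flagging. First, the paper packages the contradiction through the boundary harmonic $H^2$-manifold norm $\Vert (M,g,p)\Vert^{b.h.}_{H^2,r}$ (Definition~\ref{def:harmnorm}) and its continuity under pointed $H^2$-convergence (Proposition~\ref{prop:harmnormprops}). This means the contradiction is obtained not by ``transporting coordinates back'' but by first upgrading the weak $H^2$-convergence of the metrics to \emph{strong} $H^2$-convergence in the local charts (Section~\ref{secstrong111}); this strong convergence step is the substantial analytic content, and it is where the $K$ trace estimate and the Dirichlet/Neumann equations are actually used during the compactness argument, not only in Part~(2). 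Second, in the rigidity step the paper does \emph{not} double the $H^2$ limit metric directly and then invoke an $\varepsilon$-regularity theorem. Instead it first runs the bootstrap of Proposition~\ref{thm:main1Higherreg} on the limit (which has $\RRRic=0$, $\Th=0$, $K=0$) to make the limit metric $C^\infty$, then builds Gaussian normal coordinates and doubles smoothly; the doubled manifold is a complete smooth flat $3$-manifold with Euclidean volume growth, hence isometric to $(\RRR^3,e)$ by the classical result quoted at the end of Section~\ref{sec:manifolddo1}. This sidesteps precisely the seam-regularity difficulty you single out as hardest, and replaces your appeal to $\varepsilon$-regularity by a volume-growth rigidity argument.
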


\begin{remark} Cheeger-Gromov convergence theory for manifolds with boundary has been studied before, for example in \cite{Kodani}, \cite{Anderson2}, \cite{Anderson} and \cite{Perales}, see Section \ref{secCheegerGromovhistory}. These results assume pointwise bounds on $\RRRic$, while in Theorem \ref{thm:higherintroexharm} we only assume $L^2$-bounds on $\RRRic$.
\end{remark}

The proof of Theorem \ref{thm:main1} is split into two. We prove part (1) in Theorem \ref{thm:main1}, and part (2) in Proposition \ref{thm:main1Higherreg}.


\section{Convergence of Riemannian manifolds with boundary} \label{sec:setup}

In this section, we set up the Cheeger-Gromov theory on manifolds with boundary in low regularity. In the case of manifolds without boundary, these results are available for example in \cite{Petersen} and \cite{PetersenBook}.


\subsection{Convergence of functions, tensors and Riemannian manifolds} In this section, we introduce basic definitions of convergence.

\begin{definition}[$C^{m,\a}$- and $H^2$-convergence of functions and tensors] Let $m\geq0$ be an integer and $\a \in (0,1)$ a real. Let $(M,g)$ be a Riemannian manifold with boundary. Let $A\subset M$ be a pre-compact subset and $(U_n, \varphi_n)$ a finite number of fixed charts covering $A$. A sequence of functions $(f_i)_{i\in \mathbb{N}}$ on $A$ is said to converge in $C^{m,\alpha}$ (in $H^2$) as $i \to \infty$, if for each $n$, the pullbacks $(\varphi_n)^*f_i$ converge in $C^{m,\alpha}$ (in $H^2$) as $i\to \infty$. The convergence of a sequence of tensors on $A$ in $C^{m,\a}$ (in $H^2$) is defined similarly. \end{definition}


The so-called \emph{pointed convergence} of manifolds with boundary in $C^{m,\a}$ and $H^2$ is defined as follows.
\begin{definition}[Pointed $C^{m,\a}$- and $H^2$-convergence of manifolds with boundary] \label{def:pointedconv1}
A sequence $(M_i,g_i,p_i)$ of pointed Riemannian manifolds with boundary is said to converge to a pointed Riemannian manifold with boundary $(M,g,p)$ in the \emph{pointed $C^{m,\a}$-topology (in the pointed $H^2$-topology)} as $i \to \infty$, if for all $R>0$ there exists a bounded set $\Om \subset M$ with smooth boundary containing the geodesic ball $B_g(p,R) \subset \Om$ and embeddings $F_i: \Om \to M_i$ for large $i$ such that $F_i(p)=p_i$, $B_g(p_i,R) \subset F_i(\Om)$ and $(F_i)^\ast g_i \to g$ in the $C^{m,\a}$-topology (in the $H^2$-topology) on $\Om$ as $i\to \infty$.
\end{definition}

\begin{remark}
In this paper we consider sequences $(M_i,g_i)$ of smooth Riemannian manifolds with boundary that have uniform quantitative bounds, see for example the assumptions of Theorem \ref{thm:main1}. These sequences are then shown to convergence to a limit $(M,g)$ in a topology suited to the uniform bounds. The a priori regularity of the limit manifold is determined by the strength of the topology in which the convergence takes place.
\end{remark}


\subsection{The $C^{m, \alpha}$-norm of a Riemannian manifold} \label{secCma}

In this section, we define the $C^{m,\a}$-norm of a Riemannian manifold with boundary and state a compactness result for this norm.
\begin{definition} Let $m \geq0$ be an integer and $\a \in (0,1)$ a real. Let $(M,g,p)$ be a pointed smooth Riemannian $3$-manifold with boundary. For two given reals $r>0$ and $Q\geq0$, we write
\begin{align*} 
\Vert (M,g,p) \Vert_{C^{m,\a},r} \leq Q
\end{align*} 
if there exist a point $x \in \mathbb{H}^+$ and a $C^{m+1,\a}$-regular chart $\varphi: B^+(x,r) \to U \subset M$ with $\varphi(x)=p$ such that
\begin{itemize}
\item[(n1)] $\vert D\varphi \vert \leq e^Q$ on $B^+(x,r)$ and $\vert D \varphi^{-1} \vert \leq e^Q$ on $U$.\footnote{Here we denote \begin{align*}
\vert D \varphi\vert := \sup\limits_{\xi \in T\RRR^3, \vert \xi \vert =1} \sqrt{g(D\varphi(\xi), D\varphi(\xi))}, \,
\vert D \varphi^{-1} \vert := \sup\limits_{X \in TM, \vert X \vert_g =1} \vert D \varphi^{-1}(X) \vert.
\end{align*}}
\item[(n2)] For all multi-indices $I$ with $0 \leq \vert I \vert \leq m$, 
\begin{align*}
\max\limits_{i,j=1,2,3} r^{ \vert I \vert + \a} \left[ \pr^I g_{ij} \right]_{C^{0,\a}(B^+(x,r))} \leq Q,
\end{align*}
where $g_{ ij}$ denotes the coordinate components of $g$ in the chart $\varphi$.
\end{itemize}
We set 
$$\Vert (M,g,p) \Vert_{C^{m,\a},r} := \inf \{ Q\geq0: \Vert (M,g,p) \Vert_{C^{m,\a},r} \leq Q
\},$$
and let $\Vert (M,g,p) \Vert_{C^{m,\a},r}= +\infty$ if the infimum does not exist. Define further
$$\Vert (M,g) \Vert_{C^{m,\a},r} := \sup\limits_{p \in M} \Vert (M,g,p) \Vert_{C^{m,\a},r}.$$
\end{definition}


\begin{remark} Let $(M,g,p)$ be a compact pointed smooth Riemannian manifold with boundary. Then, the $C^{m,\a}$-norm is finite for all $r>0$, that is, $$\Vert (M,g,p)\Vert_{C^{m,\a},r}< \infty.$$
\end{remark}


\begin{remark} The above condition (n1) is equivalent to
\begin{align} \label{linftycontrol}
e^{-2Q} e_{ij} \leq g_{ij} \leq e^{2Q} e_{ij},
\end{align}
where $e$ denotes the Euclidean metric.
\end{remark}


We have the following properties of the $C^{m,\a}$-norm.
\begin{proposition} \label{prop:normprops}
Let $m\geq0$ be an integer, $\a \in (0,1)$ a real and let $(M,g,p)$ be a pointed smooth Riemannian $3$-manifold with boundary. Then the following holds.
\begin{enumerate}
\item $\Vert (M,g,p) \Vert_{C^{m,\a},r} = \Vert (M,\la^2 g,p) \Vert_{C^{m,\a},\la r}$ for all $\la >0$.
\item The function $r\mapsto \Vert (M,g,p) \Vert_{C^{m,\a},r}$ is increasing, continuous and converges to $0$ as $r\to 0$.
\item If $\Vert (M,g,p) \Vert_{C^{m,\a},r} < Q$, then there exists a chart $(\varphi,U)$ such that the geodesic ball $B_g(p,e^{-Q}r ) \subset U$.
\end{enumerate}
\end{proposition}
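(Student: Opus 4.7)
The plan is to handle the three items separately, exploiting the scaling behavior of Euclidean half-balls under dilation (for (1)), restriction of charts to smaller radii together with a normal-coordinate construction (for (2)), and a standard continuation argument along curves (for (3)).

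For part (1), I would use the Euclidean dilation $\Phi_\la: y \mapsto y/\la$. Given a $C^{m+1,\a}$-chart $\varphi: B^+(x,r) \to U \subset M$ realizing $\Vert (M,g,p) \Vert_{C^{m,\a},r} \leq Q$, I define $\tilde\varphi := \varphi \circ \Phi_\la: B^+(\la x, \la r) \to U$, which is a chart with $\tilde\varphi(\la x) = p$. A direct computation shows that the components of $\la^2 g$ in the new chart are $\tilde g_{ij}(y) = g_{ij}(y/\la)$, that condition (n1) is preserved because $\vert D \tilde\varphi \vert_{\la^2 g} = \vert D \varphi \vert_g$ and $\vert D \tilde\varphi^{-1} \vert_{\la^2 g} = \vert D \varphi^{-1} \vert_g$, and that the weighted H\"older seminorms scale as
\begin{align*}
(\la r)^{\vert I \vert + \a} [\pr^I \tilde g_{ij}]_{C^{0,\a}(B^+(\la x, \la r))} = r^{\vert I \vert + \a} [\pr^I g_{ij}]_{C^{0,\a}(B^+(x,r))}.
\end{align*}
Hence $\Vert (M,\la^2 g,p) \Vert_{C^{m,\a},\la r} \leq Q$; taking infima and repeating the argument with $\la^{-1}$ gives equality.

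For part (2), monotonicity is immediate: a chart realizing the norm at radius $r$ restricts to $B^+(x,r')$ for $r' < r$, and both the seminorms (taken over a smaller set) and the prefactors $(r')^{\vert I \vert + \a}$ can only decrease, so (n2) is only improved while (n1) is preserved. That $F(r) := \Vert (M,g,p) \Vert_{C^{m,\a},r} \to 0$ as $r \to 0$ is obtained by choosing a fixed smooth boundary-adapted chart near $p$ (for instance geodesic normal coordinates when $p \in M \setminus \pr M$, and boundary normal coordinates otherwise) in which $g_{ij}(p) = e_{ij}$; smoothness then gives $\vert D\varphi - I \vert = O(r)$ and uniformly H\"older-continuous derivatives on a fixed neighborhood, so the weighted seminorms in (n2) are $O(r^\a)$. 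Continuity is the delicate point. Left-continuity follows from monotonicity together with the observation that for a fixed smooth chart, $r \mapsto r^{\vert I \vert + \a} [\pr^I g_{ij}]_{C^{0,\a}(B^+(x,r))}$ is continuous since $\pr^I g_{ij}$ is continuous; right-continuity requires showing that any chart nearly realizing $F(r)$ can be used at slightly larger radii, which I would do by using the scaling identity in (1) to rescale the problem and reduce to the continuity of the above seminorm function. An $\ep/2$ argument, taking infima over charts, then yields continuity of $F$.

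For part (3), suppose $\varphi: B^+(x,r) \to U \subset M$ realizes $\Vert (M,g,p) \Vert_{C^{m,\a},r} < Q$, in particular $\vert D \varphi^{-1} \vert \leq e^Q$. For any $g$-rectifiable curve $\gamma: [0,T] \to M$ with $\gamma(0) = p$ and length $T < e^{-Q} r$, consider
\begin{align*}
A := \{ t \in [0,T] : \gamma([0,t]) \subset U \}.
\end{align*}
This set is nonempty (it contains $0$) and open in $[0,T]$ since $U$ is open. If $t_0 := \sup A$ satisfied $t_0 < T$, then $\varphi^{-1} \circ \gamma$ would be well-defined on $[0,t_0)$, and the Lipschitz bound $\vert D\varphi^{-1}\vert \leq e^Q$ would force its image to lie in the Euclidean ball of radius $e^Q t_0 < r$ around $x$. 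Since $\overline{B(x, e^Q t_0)} \cap \HHH^+$ is a compact subset of $B^+(x,r)$, continuity of $\varphi$ extends $\varphi^{-1} \circ \gamma$ continuously to $t = t_0$, placing $\gamma(t_0) \in U$ and contradicting the definition of $t_0$. Hence $A = [0,T]$, and every point at $g$-distance less than $e^{-Q} r$ from $p$ lies in $U$, which gives $B_g(p, e^{-Q} r) \subset U$. The main obstacle I anticipate is the continuity statement in (2); parts (1) and (3) are essentially direct computation and a standard continuation argument, respectively.
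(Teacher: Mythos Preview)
Your proposal is correct and follows essentially the same approach as the paper, which in fact omits the detailed proof and defers to the analogous argument for the $H^2$-norm (Proposition~\ref{prop:harmnormprops}) and to Petersen's book: scaling via dilation for (1), restriction plus the scaling identity for (2), and comparison of geodesic and coordinate length for (3). The only minor difference is that for continuity in (2) the paper's analogous argument packages the scaling step into the single explicit bound $\Vert (M,g,p)\Vert_{C^{m,\a},\tilde r}\leq \max\{Q+|\log(\tilde r/r)|,(\tilde r/r)^2 Q\}$, which is slightly cleaner than your $\ep/2$ sketch but amounts to the same idea.
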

\begin{proof} We omit the proofs of the parts (1) and (2), because they are analogous to the proof of Proposition \ref{prop:harmnormprops} below. Point (3) follows by using that with \eqref{linftycontrol}, the geodesic length in $M$ and the coordinate length in a chart can be compared.

For more details, see Proposition 11.3.1 in \cite{PetersenBook} for the case of manifolds without boundary. \end{proof}


We have the following compactness result.
\begin{theorem}[Fundamental theorem of manifold convergence theory] \label{thm:fundamental}
For an integer $m\geq0$ and reals $Q>0$, $\a \in (0,1)$ and $r>0$, let $\mathcal{M}^{m,\a}(Q,r)$ denote the class of pointed smooth Riemannian $3$-manifolds with boundary $(M,g,p)$ such that $$\Vert (M,g) \Vert_{C^{m,\a},r} \leq Q.$$ 
The set $\mathcal{M}^{m,\a}(Q,r)$ is compact in the pointed $C^{m,\beta}$-topology for all $\beta< \alpha$.
\end{theorem}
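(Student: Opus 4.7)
The proof follows the classical scheme of the Cheeger--Gromov fundamental theorem (cf.\ Theorem 11.3.6 in \cite{PetersenBook}), adapted to the presence of boundary. Given a sequence $(M_i,g_i,p_i) \in \mathcal{M}^{m,\a}(Q,r)$, the strategy is to construct, for each $R>0$, a limit Riemannian manifold with boundary on a compact piece and maps $F_i$ realising the pointed $C^{m,\beta}$-convergence of Definition \ref{def:pointedconv1}, and then to diagonalise across a sequence $R_n \to \infty$ to obtain the full pointed limit $(M,g,p)$.

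Fix $R>0$. By Proposition \ref{prop:normprops}(3), around every $q\in M_i$ there is a chart whose image contains the geodesic ball $B_{g_i}(q,e^{-Q}r)$. I choose a maximal $(e^{-Q}r/2)$-separated net $\{q_{i,k}\}_{k=1}^{N_i}$ in $B_{g_i}(p_i,R+r)$ with $q_{i,1}=p_i$; the two-sided metric comparison in condition (n1) of Section \ref{secCma} gives a uniform volume comparison with the Euclidean ball, so $N_i$ is bounded independently of $i$. Passing to a subsequence, the cardinality $N_i=N$ is constant, each index is labelled as interior or boundary (according to whether the associated chart $\varphi_{i,k}:B^+(x_k,r)\to U_{i,k}\subset M_i$ meets $\pr M_i$) in an $i$-independent way, and the combinatorial incidence pattern of the chart overlaps is also $i$-independent.

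For each $k$, the components $(g_i)_{ab}$ in the chart $\varphi_{i,k}$ are uniformly bounded in $C^{m,\a}(B^+(x_k,r))$ by (n2). For each overlapping pair $(k,l)$, the transition map $\psi_{i,kl}:=\varphi_{i,l}^{-1}\circ \varphi_{i,k}$ is uniformly Lipschitz by (n1), and higher derivatives are controlled by iteratively differentiating the tensor transformation identity $(g_i)_{ab}^{(k)}=(\pr_a\psi_{i,kl}^c)(\pr_b\psi_{i,kl}^d)(g_i)_{cd}^{(l)}\circ\psi_{i,kl}$ and using the $C^{m,\a}$-bound on the metric components, yielding a uniform $C^{m+1,\a}$-bound. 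The compact embeddings $C^{m,\a}\hookrightarrow C^{m,\beta}$ and $C^{m+1,\a}\hookrightarrow C^{m+1,\beta}$ for $\beta<\a$, combined with Arzel\`a--Ascoli and a diagonal extraction, produce limits $g^{(k)}_{ab}\in C^{m,\beta}$ and $\psi_{kl}\in C^{m+1,\beta}$. Each $\psi_{i,kl}$ sends $\{x^3=0\}$ to $\{x^3=0\}$ whenever $k$ and $l$ are both boundary indices, a property that persists under $C^0$-limits, so boundary charts glue to boundary charts in the limit.

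The cocycle identity and the tensor transformation law pass to the limit, hence the half-balls $B^+(x_k,r)$ glued along the $\psi_{kl}$ form a $C^{m+1,\beta}$-regular Riemannian $3$-manifold with boundary $(M^R,g,p)$, where $p$ corresponds to $x_1$ in the first chart. I then pick a smoothly bounded neighbourhood $\Om\subset M^R$ of $B_g(p,R)$ and build the embeddings $F_i:\Om \to M_i$ chart-by-chart from the $\varphi_{i,k}$; compatibility on overlaps holds up to an error which tends to zero as $i\to \infty$ because $\psi_{i,kl}\to \psi_{kl}$ in $C^0$, and this error can be absorbed via a standard partition-of-unity interpolation on shrunken overlap regions without affecting the $C^{m,\beta}$-convergence. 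By construction $(F_i)^\ast g_i \to g$ in $C^{m,\beta}(\Om)$, $F_i(p)=p_i$, and $B_{g_i}(p_i,R)\subset F_i(\Om)$ for large $i$, realising pointed $C^{m,\beta}$-convergence. The main obstacle is ensuring that the combinatorial structure (number of charts, their overlap pattern, and the boundary-versus-interior type of each chart) stabilises along the subsequence; once this stabilisation is achieved, the metric-gluing and embedding-construction arguments go through essentially verbatim from the boundaryless case.
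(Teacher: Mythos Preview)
Your proposal is correct and follows essentially the same approach that the paper indicates: the paper does not give its own proof but simply refers to Theorem 11.3.6 in \cite{PetersenBook} and remarks (following \cite{Anderson}) that the boundaryless argument extends to manifolds with boundary without significant changes. Your sketch is in fact a more detailed rendition of precisely that classical scheme, with the expected adaptations (tracking the boundary-versus-interior type of each chart and the behaviour of transition maps on $\{x^3=0\}$).
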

As already noted in Section 3.1 in \cite{Anderson}, the well-known proof for manifolds without boundary (see for example Theorem 11.3.6 in \cite{PetersenBook}) extends to manifolds with boundary without significant changes. We refer the reader to these literature results for a proof of Theorem \ref{thm:fundamental}.


\subsection{The boundary harmonic $H^2$-norm of a Riemannian manifold} 

In this section, we define the $H^2$-norm of a Riemannian manifold with boundary and state a pre-compactness result for this norm.

\begin{definition} \label{def:harmnorm}
Let $(M,g,p)$ be a pointed smooth Riemannian $3$-manifold with boundary. For two reals $r>0$ and $Q\geq0$, we write 
$$\Vert (M,g,p) \Vert^{b.h.}_{H^2,r} \leq Q,$$ 
if there exist a point $x \in \mathbb{H}^+$ and a $H^3$-regular chart $\varphi: B^+(x,r) \to U \subset M$ with $\varphi(x)=p$ such that
\begin{itemize}
\item[(h1)] $\vert D\varphi \vert \leq e^Q$ on $B^+(x,r)$ and $\vert D \varphi^{-1} \vert \leq e^Q$ on $U$. 
\item[(h2)] For all multi-indices $I$ with $1 \leq \vert I \vert \leq 2$, 
\begin{align*}
\max\limits_{i,j=1,2,3}r^{\vert I \vert- 3/2} \Vert \pr^I g_{ij} \Vert_{L^2(B^+(x,r))} \leq Q,
\end{align*}
where $g_{ij}$ denotes the coordinate components of $g$ in the chart $\varphi$. 
\item[(h3)] The coordinates $\varphi^{-1}: U \to B^+(x,r)$ are boundary harmonic.
\end{itemize}
We set 
\begin{align*}
\Vert (M,g,p) \Vert^{b.h.}_{H^2,r} := \inf \{ Q \geq0: \Vert (M,g,p) \Vert^{b.h.}_{H^2,r} \leq Q\}
\end{align*}
and let $\Vert (M,g,p) \Vert^{b.h.}_{H^2,r}= +\infty$ if the infimum does not exist. Let further
$$\Vert (M,g) \Vert^{b.h}_{H^2,r} := \sup\limits_{p \in M} \Vert (M,g,p) \Vert^{b.h.}_{H^2,r}.$$
\end{definition}

\begin{remark}
Let $(M,g,p)$ be a compact pointed smooth Riemannian manifold with boundary $(M,g,p)$. Then the $H^2$-norm is finite for all $r>0$, that is,
$$\Vert (M,g,p)\Vert^{b.h.}_{H^2,r}< \infty.$$
\end{remark}

\begin{remark} 
The above condition (h1) is equivalent to
\begin{align*}
e^{-2Q} e_{ij} \leq g_{ij} \leq e^{2Q} e_{ij},
\end{align*}
where $e$ denotes the Euclidean metric.
\end{remark}

We have the following properties of the boundary harmonic $H^2$-norm.
\begin{proposition} \label{prop:harmnormprops}
Let $r>0$ be a real and let $(M,g,p)$ be a smooth pointed Riemannian $3$-manifold with boundary and $p \in M$ a point. The following holds.
\begin{enumerate}
\item $\Vert (M, \la^2 g,p) \Vert^{b.h.}_{H^2, \la r} = \Vert (M,g,p) \Vert^{b.h.}_{H^2,r}$ for all reals $\la >0$.
\item The function $r \mapsto \Vert (M,g,p) \Vert^{b.h.}_{H^2,r}$ is increasing, continuous and converges to $0$ as $r\to 0$.
\item Let $(M_i,g_i,p_i)$ be a sequence of pointed smooth Riemannian manifolds with boundary and $(M,g,p)$ be a pointed smooth Riemannian manifold. If  
$$(M_i,g_i,p_i) \to (M,g,p)$$
 as $i\to \infty$ in the pointed $H^2$-topology, then it holds that
\begin{align*}
\Vert (M_i, g_i, p_i) \Vert^{b.h.}_{H^2,r} \to \Vert (M,g,p ) \Vert^{b.h.}_{H^2,r}.
\end{align*}
\end{enumerate}
\end{proposition}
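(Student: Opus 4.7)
The plan is to follow closely the strategy used for the $C^{m,\alpha}$-norm in Proposition \ref{prop:normprops} (itself modelled on Proposition 11.3.1 in \cite{PetersenBook}), the main novelty being that we work with $L^2$-based derivatives rather than H\"older ones. For part (1), given a boundary harmonic chart $\varphi\colon B^+(x,r)\to U$ realising $\Vert(M,g,p)\Vert^{b.h.}_{H^2,r}\leq Q$, I would introduce the rescaled chart $\tilde\varphi\colon B^+(\la x,\la r)\to U$ defined by $\tilde\varphi(\tilde x):=\varphi(\la^{-1}\tilde x)$. A direct computation shows that the components of $\la^2 g$ in $\tilde\varphi$ are $\tilde g_{ij}(\tilde x)=g_{ij}(\la^{-1}\tilde x)$, which preserves (h1), while $\triangle_{\la^2 g}=\la^{-2}\triangle_g$ together with the analogous scaling of $\Ld_\gd$ preserves (h3). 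Changing variables in the $L^2$-integrals gives
\begin{align*}
(\la r)^{|I|-3/2}\Vert\pr^I\tilde g\Vert_{L^2(B^+(\la x,\la r))} = r^{|I|-3/2}\Vert\pr^I g\Vert_{L^2(B^+(x,r))},
\end{align*}
preserving (h2); the reverse inequality follows by applying the same argument to $\la^{-1}$.

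For part (2), monotonicity of $r\mapsto f(r):=\Vert(M,g,p)\Vert^{b.h.}_{H^2,r}$ would follow by restricting a near-optimal chart on $B^+(x,r_2)$ to $B^+(x,r_1)$: conditions (h1), (h3) and the $|I|=2$ term of (h2) pass directly. For the delicate $|I|=1$ term, which is sensitive because of the negative weight $r^{-1/2}$, I would combine the elliptic equation $\tfrac12\triangle_g g_{ij}+Q_{ij}=-\RRRic_{ij}$ of Lemma \ref{lemmaharmexpr} with a Caccioppoli-type argument to bound $\Vert\pr g\Vert_{L^2(B^+(x,r_1))}$ in terms of $\Vert\pr^2 g\Vert_{L^2(B^+(x,r_2))}$ and the $L^\infty$-bound (h1). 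Continuity of $f$ would be handled by comparing near-optimal charts on nested balls of close radii. The vanishing $f(r)\to 0$ as $r\to 0$ uses smoothness of $g$: local solvability produces boundary harmonic coordinates on sufficiently small $B^+(x,r)$, and pointwise bounds on the first two derivatives of $g$ in these coordinates yield $r^{-1/2}\Vert\pr g\Vert_{L^2}\lesssim r$ and $r^{1/2}\Vert\pr^2 g\Vert_{L^2}\lesssim r^2$, both vanishing as $r\to 0$.

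For part (3), let $F_i\colon\Om\to M_i$ be embeddings realising the pointed $H^2$-convergence. To establish $\limsup_i f_i(r)\leq f(r)$, I would take a near-optimal boundary harmonic chart $\varphi$ for $(M,g,p)$ and transfer it to $F_i\circ\varphi$; this chart is only approximately boundary harmonic for $g_i$, but solving $\triangle_{g_i}y_i^j=0$ in $B^+(x,r)$ with $\Ld$-harmonic tangential boundary values close to those of $\varphi^{-1}$ produces, by the elliptic stability theory collected in the appendix, a genuine boundary harmonic chart for $(M_i,g_i,p_i)$ whose norm bounds differ from those of $\varphi$ by $o(1)$ as $i\to\infty$. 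For the reverse inequality $\liminf_i f_i(r)\geq f(r)$, I would extract from near-optimal boundary harmonic charts $\varphi_i$ for $(M_i,g_i,p_i)$ a subsequence converging weakly in $H^3$ (via Theorem \ref{thm:fundamental} and the compact Sobolev embedding) to a limit chart for $(M,g,p)$; boundary harmonicity passes to the limit because the interior and boundary Laplace--Beltrami operators of $g_i$ converge to those of $g$ in an $H^2$-compatible sense, and weak lower semi-continuity of the $H^2$-norm yields the bound. The main obstacle is precisely this preservation of the mixed boundary-harmonic condition under the limiting procedure: both the stability argument (forward direction) and the weak-limit argument (reverse direction) depend sensitively on the coupled interior/boundary system, and rely on Lemma \ref{NeumannHarmonic} together with the mixed elliptic theory developed in the appendix.
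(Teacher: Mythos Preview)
Your overall strategy matches the paper's: part (1) via the rescaled chart, part (2) via restriction/scaling/smoothness, and part (3) via the two inequalities $\limsup \leq$ and $\liminf \geq$, each handled by transplanting near-optimal charts and correcting them to be boundary harmonic using the elliptic estimates in the appendix. Two points deserve comment.

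For monotonicity in part (2), the paper simply asserts it ``follows directly by restriction'' and moves on. You are right to flag the $|I|=1$ term as delicate, since the weight $r^{-1/2}$ grows under restriction while $\Vert\partial g\Vert_{L^2}$ need not shrink proportionally. However, your proposed fix via the elliptic equation $\tfrac12\triangle_g g_{ij}+Q_{ij}=-\mathrm{Ric}_{ij}$ is problematic: the norm $\Vert(M,g,p)\Vert^{b.h.}_{H^2,r}$ is defined with no reference to $\mathrm{Ric}$, so a Caccioppoli argument that inputs $\mathrm{Ric}$ cannot yield a bound purely in terms of $Q$. If one wants to close this, the right tool is an interpolation inequality bounding $r^{-1/2}\Vert\partial g\Vert_{L^2(B^+(x,r))}$ in terms of the $L^\infty$-control from (h1) and the second-derivative control from (h2), with no PDE input. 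For continuity, the paper takes a cleaner route than ``comparing near-optimal charts on nested balls'': it uses part (1) to write $\Vert(M,g)\Vert^{b.h.}_{H^2,\tilde r}=\Vert(M,(r/\tilde r)^{-2}g)\Vert^{b.h.}_{H^2,r}$ and then observes that the \emph{same} chart $\varphi$ realises bounds for $(r/\tilde r)^{-2}g$ that differ from those for $g$ by explicit factors of $r/\tilde r$, yielding the quantitative estimate
\[
\Vert(M,g)\Vert^{b.h.}_{H^2,\tilde r}\leq \max\Big\{Q+\big|\log(\tilde r/r)\big|,\,(\tilde r/r)^2 Q\Big\},
\]
from which continuity is immediate. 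This avoids altogether the need to compare distinct charts at nearby radii.
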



\begin{proof} {\bf (1).} For a real $\la >0$, consider the scaled 
\begin{align*}
\varphi_\la(x):= \varphi (\la^{-1} x):B^+(\la x, \la r) \to U \subset M , \, g_\la := \la^2 g.
\end{align*}
This pair satisfies the conditions (h1) and (h2) on $B^+(\la x, \la r)$ in Definition \ref{def:harmnorm} with the same $Q< \infty$ as $(\varphi,g)$ on $B^+( x,  r)$. Indeed, this follows because on $B^+(\la^{-1} x, \la^{-1}r)$
$$(\varphi_\la)^\ast g_\la (x) = \varphi^\ast g (\la x).$$

It remains to show that if $\varphi^{-1}$ is boundary harmonic for $g$, then $\left( \varphi_\la\right)^{-1}$ is boundary harmonic for $g_\la$. This follows by the invariance of the Laplace-Beltrami operator and is left to the reader. This finishes the proof of point (1).\\

\noindent {\bf (2).} The monotonicity of the norm in $r>0$ follows directly by restriction. The continuity follows from the above scaling invariance by estimating for $\tilde{r}>0$
\begin{align} \label{continuityinr3434}
\Vert (M,g) \Vert^{b.h.}_{H^2,\tilde{r}} = \left\Vert \left(M,\left( \frac{r}{\tilde{r}} \right)^{-2}g\right) \right\Vert^{b.h.}_{H^2,r} \leq \max \left\{ Q + \left\vert \log \left( \frac{\tilde{r}}{r} \right) \right\vert,  \left( \frac{\tilde{r}}{r}\right)^2 Q\right\}.
\end{align}
The estimate \eqref{continuityinr3434} implies that the norm is continuous in $r$, for more details see Propositions 11.3.2 and 11.3.5 in \cite{PetersenBook} for the case of manifolds without boundary.

It remains to show that as $r\to 0$,
$$\Vert (M,g,p) \Vert^{b.h.}_{H^2,r} \to 0.$$
However, by Proposition \ref{prop:normprops} we have that as $r\to 0$,
\begin{align*}
\Vert (M,g,p) \Vert_{C^{2,1/2},r} \to 0.
\end{align*}
Approximate these coordinates by boundary harmonic coordinates with appropriate prescribed Dirichlet and Neumann boundary data on the boundary, see Lemmas \ref{lemmaharmexpr} and \ref{NeumannHarmonic}. The standard elliptic estimates in Corollaries \ref{prop:ellest3} and \ref{fullldg6858} in the appendix show that these boundary harmonic coordinates admit the right bounds of size $\varep$. Details are left to the reader. \\


\noindent {\bf (3).} It suffices to prove the following two inequalities,
\begin{align}
\limsup\limits_{i\to \infty} \Vert (M_i,g_i,p_i)\Vert^{b.h.}_{H^2,r} &\leq \Vert (M,g,p) \Vert^{b.h.}_{H^2,r}, \label{firstdirection1} \\
\liminf\limits_{i\to \infty} \Vert (M_i,g_i,p_i)\Vert^{b.h.}_{H^2,r} &\geq \Vert (M,g,p) \Vert^{b.h.}_{H^2,r}. \label{seconddirection1}
\end{align}
In the following, we discuss \eqref{firstdirection1}, the estimate \eqref{seconddirection1} is proved by an analogous argument left to the reader.

We recall from Definition \ref{def:pointedconv1} that the assumption
$$(M_i,g_i,p_i) \to (M,g,p) \text{ as } i \to \infty \text{ in the pointed $H^2$-topology},$$ 
means that for each integer $i\geq1$ and real $R>0$ a bounded open set $\Om \subset M$ with smooth boundary such that $B_g(p,R) \subset \Om$ together with an embedding
$F_i: \Om \to M_i$
such that $F_i(p) = p_i$, $B_{g_i}(p_i,R) \subset F_i(\Om)$ and
\begin{align} \label{eq:h2conv1}
(F_i)^\ast g_i \to g \text{ as } i \to \infty \text{ in the $H^2$-topology on } \Om.
\end{align}
Using these $F_i$, locally define coordinates on $(M_i,g_i)$. Then approximate these coordinates on $(M_i,g_i)$ by boundary harmonic coordinates with appropriate Dirichlet and Neumann boundary conditions on the boundary, see Lemmas \ref{lemmaharmexpr} and \ref{NeumannHarmonic}. By the standard elliptic estimates in Corollaries \ref{prop:ellest3} and \ref{fullldg6858} in the appendix, and by the fact that due to \eqref{eq:h2conv1} we have, schematically,
\begin{align*}
\triangle_{(F_i)^\ast g_i } \to \triangle_g \text{ as } i \to \infty,
\end{align*}
it follows that these boundary harmonic coordinates on $(M_i,g_i)$ admit the right bounds. For more details, we refer to \cite{Petersen} where the case of manifolds without boundary is treated. \end{proof}


By Lemma \ref{sobolevdomainlemma}, we have the following result. The proof is left to the reader.
\begin{lemma} \label{lem:sobolevmanif}
Let $r>0$. Let $(M,g)$ be a Riemannian $3$-manifold with boundary. Then, for $\tilde r < r$, $\Vert (M,g) \Vert_{C^{0,\a},\tilde{r}}$ for $\a \in (0,1/2)$ is bounded in terms of $\Vert (M,g) \Vert^{b.h.}_{H^2,r}$.
\end{lemma}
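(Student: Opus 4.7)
The plan is that this should follow almost immediately from the Sobolev embedding $H^2(\Omega^3) \hookrightarrow C^{0,\a}(\overline{\Omega^3})$ for $\a \in (0,1/2)$ in Lemma \ref{sobolevdomainlemma}, together with the scale invariance of both norms. The only minor subtlety is that $B^+(x,r)$ is only Lipschitz (not smooth), so one must pass to a smooth intermediate domain via Remark \ref{remark453546}, which is precisely the reason the statement requires the strict inequality $\tilde r < r$.

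First I would reduce to the case $r=1$: by Proposition \ref{prop:normprops}(1) and Proposition \ref{prop:harmnormprops}(1), both $\Vert \cdot \Vert_{C^{0,\a},\tilde r}$ and $\Vert \cdot \Vert^{b.h.}_{H^2,r}$ are invariant under rescaling $g \mapsto \la^2 g$, $r \mapsto \la r$, so it suffices to prove that $\Vert (M,g) \Vert_{C^{0,\a},\tilde r}$ is bounded in terms of $Q := \Vert (M,g) \Vert^{b.h.}_{H^2,1}$ whenever $\tilde r < 1$. Fix $p \in M$ and choose a boundary harmonic chart $\varphi : B^+(x,1) \to U \subset M$ with $\varphi(x)=p$ realizing (h1)--(h3) with constant $Q$.

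Condition (n1) for the $C^{0,\a}$-norm at scale $\tilde r$ is then immediate, since the same chart restricted to $B^+(x,\tilde r)$ satisfies $\vert D\varphi \vert, \vert D\varphi^{-1}\vert \leq e^Q$. For condition (n2) with $m=0$, I need to show
\[
\tilde r^{\a}\,[g_{ij}]_{C^{0,\a}(B^+(x,\tilde r))} \leq Q'
\]
with $Q'$ depending only on $Q$. Pick a smooth domain $\Omega = \Omega_{x,\tilde r,1}$ with $B^+(x,\tilde r) \subset\subset \Omega \subset\subset B^+(x,1)$ as in \eqref{defomegq3424145}. By Lemma \ref{sobolevdomainlemma} applied on $\Omega$,
\[
[g_{ij}]_{C^{0,\a}(\overline{\Omega})} \lesssim \Vert g_{ij} \Vert_{H^2(\Omega)} \leq \Vert g_{ij} \Vert_{L^2(B^+(x,1))} + \Vert \partial g_{ij} \Vert_{L^2(B^+(x,1))} + \Vert \partial^2 g_{ij} \Vert_{L^2(B^+(x,1))}.
\]
The $L^2$-norm of $g_{ij}$ is controlled by the pointwise bound $g_{ij} \leq e^{2Q} e_{ij}$ coming from (h1), while the two derivative terms are controlled by $Q$ via (h2). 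Hence $[g_{ij}]_{C^{0,\a}(B^+(x,\tilde r))}$ is bounded by a constant $C(Q)$, and since $\tilde r < 1$ we obtain $\tilde r^{\a}\,[g_{ij}]_{C^{0,\a}(B^+(x,\tilde r))} \leq C(Q)$. Taking the supremum over $p \in M$ and then the infimum over $Q'\geq C(Q)$ yields the desired bound on $\Vert (M,g) \Vert_{C^{0,\a},\tilde r}$.

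As indicated above, there is no real obstacle: the compactness half of the Sobolev embedding is not needed, only the continuous embedding, and the scale-invariance reduction eliminates any concern about how the constants depend on $r$ and $\tilde r$. The sole technical point is the Lipschitz-vs-smooth boundary issue on the reference domain $B^+(x,1)$, handled by Remark \ref{remark453546}, which forces the loss $\tilde r < r$ in the statement.
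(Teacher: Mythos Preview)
Your proposal is correct and follows essentially the same approach as the paper, which simply says the lemma follows from the Sobolev embedding in Lemma \ref{sobolevdomainlemma} and leaves the proof to the reader. Your explicit treatment of the scale reduction via Propositions \ref{prop:normprops}(1) and \ref{prop:harmnormprops}(1), together with the passage to a smooth intermediate domain per Remark \ref{remark453546}, makes precise exactly what the paper leaves implicit.
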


As corollary of Theorem \ref{thm:fundamental} and Lemma \ref{lem:sobolevmanif}, we have the following pre-compactness result in the $H^2$-topology. The proof is left to the reader.
\begin{corollary} \label{corollary:fundamentalharm}
Let $Q>0$ and $r>0$ be two reals. Let $\HH^{2}(Q,r)$ denote the class of pointed Riemannian $3$-manifolds $(M,g,p)$ with boundary such that $\Vert (M,g) \Vert^{b.h.}_{H^2,r} \leq Q$. Then $\HH^{2}(Q,r)$ is pre-compact in the pointed $C^{0,\a}$-topology for $\a \in (0,1/2)$. \end{corollary}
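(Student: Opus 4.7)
The plan is to reduce pre-compactness in $\HH^2(Q,r)$ to the already-established Cheeger-Gromov compactness theorem for the $C^{m,\alpha}$-norm (Theorem \ref{thm:fundamental} with $m=0$), using Lemma \ref{lem:sobolevmanif} as the bridge between the two norms. The key point is that the $H^2$-boundary harmonic norm dominates a $C^{0,\alpha'}$-norm for any $\alpha' \in (0,1/2)$ on a slightly smaller scale, and the $C^{0,\alpha'}$-norm is handled by existing theory.

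First, fix $\alpha \in (0,1/2)$ and pick $\alpha' \in (\alpha, 1/2)$. Given a sequence $(M_i, g_i, p_i) \in \HH^2(Q,r)$, apply Lemma \ref{lem:sobolevmanif} to obtain a constant $Q' = Q'(Q, \alpha')$ and some $\tilde{r} < r$ such that
\[
\Vert (M_i, g_i) \Vert_{C^{0,\alpha'},\tilde{r}} \leq Q' \quad \text{for all } i \geq 1.
\]
Consequently the sequence lies in the class $\mathcal{M}^{0,\alpha'}(Q', \tilde{r})$ of Theorem \ref{thm:fundamental}.

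Next, invoke Theorem \ref{thm:fundamental} with $m=0$ at H\"older exponent $\alpha'$: the class $\mathcal{M}^{0,\alpha'}(Q',\tilde{r})$ is compact in the pointed $C^{0,\beta}$-topology for every $\beta < \alpha'$. Choosing $\beta = \alpha$, which is allowed since $\alpha < \alpha'$, we extract a subsequence converging in the pointed $C^{0,\alpha}$-topology to some pointed Riemannian $3$-manifold with boundary. This proves pre-compactness of $\HH^2(Q,r)$ in the pointed $C^{0,\alpha}$-topology.

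I do not anticipate a genuine obstacle here; the only mild bookkeeping point is the loss of radius from $r$ to $\tilde r$ in Lemma \ref{lem:sobolevmanif} and the strict inequality $\beta < \alpha'$ in Theorem \ref{thm:fundamental}, both of which are absorbed by choosing $\alpha'$ strictly between $\alpha$ and $1/2$. Note that because $\HH^2(Q,r)$ is only pre-compact (the limit need not itself satisfy an $H^2$ bound in the boundary harmonic sense without further argument), we state the result in the weaker $C^{0,\alpha}$-topology, consistent with the statement of the corollary.
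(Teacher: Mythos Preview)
Your proposal is correct and matches the paper's intended argument: the paper explicitly states that Corollary \ref{corollary:fundamentalharm} follows from Theorem \ref{thm:fundamental} and Lemma \ref{lem:sobolevmanif} and leaves the proof to the reader, which is precisely the reduction you carry out. Your handling of the radius loss and the choice of an intermediate exponent $\alpha' \in (\alpha,1/2)$ to accommodate the strict inequality in Theorem \ref{thm:fundamental} is exactly the bookkeeping required.
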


The next proposition shows that local $H^2$-convergence in charts implies global pointed $H^2$-convergence of manifolds. The proof is based on the proof of Theorem \ref{thm:fundamental} and left to the reader, see Lemma 4.5 in \cite{Petersen} for the case of manifolds without boundary.

\begin{proposition}\label{prop:localtoglobal}
Let $r>0$ be a fixed real number. Let $(M,g,p)$ and $(M_i,g_i,p_i)$, for $i\geq1$, be smooth Riemannian $3$-manifolds with boundary. Suppose we have countable atlases of $M$ and $M_i$ for $i\geq1$ consisting respectively of boundary harmonic $H^3$-charts
\begin{align*}
\varphi_{n}&: B^+(x_{n},r) \to U \subset M, \\
\varphi_{in}&: B^+(x_{n},r) \to U_i \subset M_i,
\end{align*}
such that 
\begin{enumerate}
\item $\varphi_{in} \to \varphi_n$ in the $H^3$-topology as $i\to \infty$,
\item $\varphi_{n}$ and $\varphi_{in}$ satisfy (h1) and (h3) in Definition \ref{def:harmnorm} for a uniform $Q>0$,
\item $g_{in} \to g_n$ in $H^2(B^+(x_n,r))$, where $g_{in} := {\varphi_{in}}^\ast g_i$ and $g_{n} := {\varphi_{n}}^\ast g$.
\end{enumerate}
Then, subsequentially, 
\begin{align*}
(M_i,g_i, p_i) \to (M,g,p) \text{ as } i \to \infty \text{ in the pointed $H^2$-topology.}
\end{align*}
\end{proposition}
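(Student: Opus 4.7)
The plan is, for each fixed $R>0$, to construct a bounded open $\Omega \subset M$ with smooth boundary containing $B_g(p,R)$, together with embeddings $F_i:\Omega\to M_i$ satisfying $F_i(p)=p_i$, $B_{g_i}(p_i,R)\subset F_i(\Omega)$, and $(F_i)^\ast g_i\to g$ in $H^2(\Omega)$. The uniform bound $|D\varphi_{n}^{-1}|\le e^Q$ from (h1) ensures that each chart image $\varphi_n(B^+(x_n,r/2))$ contains a geodesic ball of radius at least $e^{-Q}r/2$ centered at $\varphi_n(x_n)$. Since $\overline{B_g(p,R+1)}$ is compact, I select finitely many charts $\varphi_{n_1},\ldots,\varphi_{n_K}$ whose associated half-ball images cover it, and pick a smooth $\Omega$ with $B_g(p,R)\subset\Omega\subset\subset\bigcup_{k=1}^K\varphi_{n_k}(B^+(x_{n_k},r/2))$.

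\textbf{Local diffeomorphisms and overlap control.} On $V_k:=\varphi_{n_k}(B^+(x_{n_k},r/2))$, define $\Phi_{i,k}:=\varphi_{i,n_k}\circ\varphi_{n_k}^{-1}:V_k\to M_i$. Hypothesis (1) says $\varphi_{i,n_k}\to\varphi_{n_k}$ in $H^3$, so read in the boundary harmonic charts $\varphi_{n_k}$ on $M$ and $\varphi_{i,n_k}$ on $M_i$ the maps $\Phi_{i,k}$ converge to the identity in $H^3$. On overlaps $V_k\cap V_l$, the analogous convergence of the transition functions $\varphi_{i,n_k}^{-1}\circ\varphi_{i,n_l}\to \varphi_{n_k}^{-1}\circ\varphi_{n_l}$ in $H^3$ shows that $\Phi_{i,k}$ and $\Phi_{i,l}$ differ by $o(1)$ in $H^3$ on $V_k\cap V_l$ when both are read in a common chart on $M_i$.

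\textbf{Gluing into a global embedding.} Let $\{\chi_k\}$ be a smooth partition of unity on $\Omega$ subordinate to $\{V_k\}$. Define $F_i:\Omega\to M_i$ by the center-of-mass construction: at $q\in\Omega$, take the $g_i$-Riemannian barycenter of the points $\{\Phi_{i,k}(q):\chi_k(q)>0\}$ weighted by $\chi_k(q)$. Since all these points lie within $o(1)$ of each other in a local chart on $M_i$, the barycenter is uniquely defined for $i$ large, and the resulting $F_i$ is $H^3$-close to each $\Phi_{i,k}$ on $V_k$; by the inverse function theorem it is an $H^3$-embedding for $i$ large. The normalization $F_i(p)=p_i$ is achieved by post-composing with an $o(1)$ geodesic translation on $M_i$. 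The inclusion $B_{g_i}(p_i,R)\subset F_i(\Omega)$ then follows from the $C^{0,\alpha}$-convergence of $(F_i)^\ast g_i$ (Lemma \ref{lem:sobolevmanif}) and a distance comparison argument.

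\textbf{Convergence of the pullback and main obstacle.} In each chart $\varphi_{n_k}$, one writes $\varphi_{n_k}^\ast((F_i)^\ast g_i)$ as the pullback of $\varphi_{i,n_k}^\ast g_i$ by an $H^3$-map converging to the identity; by hypothesis (3) and the $H^2$ composition/product estimates of Lemma \ref{lemmaMoserEstimates}, this converges in $H^2$ to $\varphi_{n_k}^\ast g$, and patching via $\{\chi_k\}$ yields $(F_i)^\ast g_i\to g$ in $H^2(\Omega)$. The principal technical difficulty is the gluing step: producing a single global $H^3$-embedding $F_i$ from the locally defined $\Phi_{i,k}$, with the additional care that charts sending $\{x^3=0\}$ to $\partial M_i$ remain compatible under the averaging, so that the boundary of $F_i(\Omega)$ matches up with $\partial M_i$ as required. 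As in the proof of Lemma~4.5 in \cite{Petersen} for the boundaryless case, this is controlled by careful analysis in the $H^3$-topology on the space of local diffeomorphisms; the boundary aspect introduces no essentially new difficulty because the boundary harmonic structure is preserved under $H^3$-small perturbations.
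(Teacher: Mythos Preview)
Your proposal is correct and follows precisely the route the paper indicates: the paper does not give a proof but states that it ``is based on the proof of Theorem \ref{thm:fundamental} and left to the reader, see Lemma 4.5 in \cite{Petersen} for the case of manifolds without boundary.'' Your sketch---finite covering by chart images, local diffeomorphisms $\Phi_{i,k}=\varphi_{i,n_k}\circ\varphi_{n_k}^{-1}$, gluing via a center-of-mass construction, and verifying $H^2$-convergence of the pullback---is exactly the Petersen argument adapted to the boundary setting, and is in fact more detailed than what the paper itself supplies.
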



\section{Existence of boundary harmonic coordinates} \label{sec:ExistenceL2}

The following is the main result of this section.
\begin{theorem}[Existence of boundary harmonic coordinates] \label{thm:main1}
Let $(M,h)$ be a compact smooth Riemannian $3$-manifold with boundary satisfying
\begin{align} \begin{aligned}
\Vert \RRRic \Vert_{L^2(M)} \leq \La, \,  \Vert \Th \Vert_{ L^4(\pr M)} \leq \Psi, \, r_{vol}(M,1) \geq v
\end{aligned} \label{eq:bounds1}
\end{align}
for constants $\La \geq0$, $\Psi \geq0$ and $v>0$. Then, for every real $Q>0$, there exists an $$r=r(\La,\Psi,v,Q)>0$$ such that 
$$\Vert (M,h) \Vert^{b.h.}_{H^2,r} \leq Q.$$
Here $\Th$ denotes the second fundamental form of $\pr M \subset M$, and $r_{vol}(M,1)$ is the volume radius at scale $1$ of $(M,h)$, see Definition \ref{def:volumeradius}.
\end{theorem}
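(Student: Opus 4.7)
The strategy is a contradiction-and-rescaling argument in the spirit of the Cheeger--Gromov theory, coupled with the low-regularity elliptic theory for boundary harmonic coordinates developed in Section \ref{discussionbdryharm} and a manifold-doubling rigidity argument. Fix $Q_0>0$ and, toward a contradiction, assume the conclusion fails: there exist manifolds $(M_i, h_i)$ obeying \eqref{eq:bounds1} uniformly in $i$, base points $p_i \in M_i$, and scales $r_i \to 0$ with $\Vert (M_i, h_i, p_i)\Vert^{b.h.}_{H^2, r_i} > Q_0$. By the monotonicity and continuity of the norm in $r$ (Proposition \ref{prop:harmnormprops}(2)), pick $\tilde r_i \in (0, r_i]$ with $\Vert (M_i, h_i, p_i)\Vert^{b.h.}_{H^2, \tilde r_i} = Q_0$ and rescale $\tilde h_i := \tilde r_i^{-2} h_i$. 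The scaling invariance of the norm (Proposition \ref{prop:harmnormprops}(1)) gives $\Vert (M_i, \tilde h_i, p_i)\Vert^{b.h.}_{H^2, 1} = Q_0$, while the scaling of the $L^2$-Ricci norm ($\sim \la^{-1/2}$), of the $L^4$-norm of $\Th$ ($\sim \la^{-3/2}$), and the scale-invariance of the volume radius yield
\begin{align*}
\Vert \RRRic_{\tilde h_i}\Vert_{L^2(M_i)} \le \tilde r_i^{1/2}\La \to 0, \quad \Vert \Th_{\tilde h_i}\Vert_{L^4(\pr M_i)} \le \tilde r_i^{3/2}\Psi \to 0, \quad r_{vol}(M_i, \tilde h_i, 1) \ge v.
\end{align*}
Corollary \ref{corollary:fundamentalharm} then produces a subsequential pointed $C^{0,\alpha}$-limit $(M_\infty, h_\infty, p_\infty)$, and a further diagonal extraction provides weak $H^2$-limits $(g_\infty)_{jk}$ of the boundary-harmonic coordinate components of $\tilde h_i$, as well as weak $H^{3/2}$-limits of the boundary metrics $(\gd_i)_{AB}$.

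The decisive analytic step is to upgrade these weak limits to \emph{strong} $H^2$ convergence in each chart. I would proceed exactly as sketched in Section \ref{discussionbdryharm}: use the trace estimate of Proposition \ref{PropositionTraceEstimate} to control the Gauss curvature in $H^{-1/2}$ from $\RRRic$ and $\Th$; feed this into the boundary equation \eqref{eqfito22} to obtain strong $H^{3/2}$-convergence of $(\gd_i)_{AB}$ via boundary elliptic estimates; then solve the Dirichlet problem \eqref{eqfito11} in the bulk to obtain strong $H^2$-convergence of $(g_i)_{AB}$ (Corollary \ref{prop:ellest3}); handle $g_i^{33}$ and $g_i^{3A}$ through the explicit Neumann data of Lemma \ref{NeumannHarmonic} and Corollary \ref{fullldg6858}; and recover the remaining components via Lemma \ref{upstairscontrol}. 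The quadratic nonlinearities $Q_{ij}$ and $\Qdd_{AB}$ pass to the limit thanks to Lemma \ref{LemmaQConvergenceEstimate}, while the right-hand sides $\RRRic_{\tilde h_i}$, $\Th_{\tilde h_i}$ and $K_i$ tend to zero by construction. Proposition \ref{prop:localtoglobal} then upgrades the chart-wise strong convergence to pointed $H^2$-convergence of the manifolds, $(M_i, \tilde h_i, p_i) \to (M_\infty, h_\infty, p_\infty)$.

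The final step is a rigidity identification of the limit. The limit metric $h_\infty$ is weakly Ricci-flat on $M_\infty$, has vanishing second fundamental form on $\pr M_\infty$, and satisfies $r_{vol}(M_\infty, 1) \ge v$. Form the metric double $(\widetilde M_\infty, \tilde h_\infty)$ across $\pr M_\infty$: because $\Th_\infty = 0$ the normal derivatives of the metric match under the reflection, so the gluing produces a genuinely $H^2$-regular metric whose distributional Ricci tensor acquires no singular part along the doubling hypersurface and therefore vanishes on all of $\widetilde M_\infty$. In dimension three, vanishing Ricci forces the full Riemann tensor to vanish, so together with the volume radius lower bound the standard boundaryless Cheeger--Gromov rigidity implies $(\widetilde M_\infty, \tilde h_\infty)$ is locally isometric to Euclidean $\RRR^3$; hence $(M_\infty, h_\infty)$ is locally isometric to $\HHH^+$ and in particular $\Vert (M_\infty, h_\infty, p_\infty)\Vert^{b.h.}_{H^2, 1} = 0$. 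Continuity of the norm under pointed $H^2$-convergence (Proposition \ref{prop:harmnormprops}(3)) then forces
\begin{align*}
Q_0 = \lim_{i\to\infty} \Vert (M_i, \tilde h_i, p_i)\Vert^{b.h.}_{H^2, 1} = \Vert (M_\infty, h_\infty, p_\infty)\Vert^{b.h.}_{H^2, 1} = 0,
\end{align*}
the desired contradiction.

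The principal obstacle I expect is the strong-convergence upgrade: one must interlock the bulk Dirichlet problem for $g_{AB}$, the boundary equation for $\gd_{AB}$, and the Neumann problem for $g^{33}, g^{3A}$ in a self-consistent way, passing the quadratic nonlinearities to the weak limit via the compact Sobolev embeddings of Lemma \ref{sobolevdomainlemma} and the product estimates of Lemma \ref{lemmaMoserEstimates}, and then verify that the low-regularity doubling truly produces an $H^2$-metric with vanishing distributional Ricci curvature on the entire doubled manifold. The vanishing of $\Th$ in the limit — which is precisely what the $L^4$-scaling in the contradiction setup forces — is indispensable, as anything weaker would allow a singular curvature sheet along $\pr M_\infty$ and obstruct the rigidity step.
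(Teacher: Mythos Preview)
Your overall architecture matches the paper's proof: contradiction and rescaling, $C^{0,\alpha}$ extraction via Corollary~\ref{corollary:fundamentalharm}, strong $H^2$ upgrade through the boundary--harmonic elliptic hierarchy (trace estimate $\Rightarrow$ $\gd_{AB}$ $\Rightarrow$ $g_{AB}$ $\Rightarrow$ $g^{33},g^{3A}$), and rigidity by doubling. Two technical points in your setup are not quite right, though both are easily repaired.

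\emph{Normalisation of the global versus pointed norm.} You normalise the \emph{pointed} norm $\Vert (M_i,h_i,p_i)\Vert^{b.h.}_{H^2,\tilde r_i}=Q_0$ and then rescale. But Corollary~\ref{corollary:fundamentalharm} (and the chart bounds needed throughout Section~\ref{secstrong111}) require a uniform bound on the \emph{global} norm $\Vert(M_i,\tilde h_i)\Vert^{b.h.}_{H^2,1}$, which your normalisation does not provide. The paper instead normalises the global norm to $Q$ at scale $r_i$ (using continuity in $r$ and that the norm tends to $0$ as $r\to0$), rescales so the global norm at scale $1$ equals $Q$, and only then selects $p_i$ with pointed norm $\ge Q/2$. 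The order matters.

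\emph{Volume growth at all scales.} For the rigidity step you only record $r_{vol}(M_\infty,1)\ge v$. This is not enough to force a complete flat $3$-manifold to be $\RRR^3$: flat quotients such as $\RRR^2\times S^1$ satisfy it. What is needed (and what the paper derives in \eqref{eqvolumegrowth}) is $\mathrm{vol}_g(B_g(p,r))\ge \tfrac{4\pi}{3}v r^3$ for \emph{every} $r>0$; this comes from the blow-up, since the rescaled bound holds for $r\le \tilde r_i^{-1}\to\infty$ and survives the limit.

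One further difference worth noting: you propose to double directly at the $H^2$ level and argue that no singular Ricci layer appears. The paper avoids this low-regularity gluing by first bootstrapping the limit metric to $C^\infty$ (Proposition~\ref{thm:main1Higherreg}, using $\RRRic=0$, $K=0$, $\Th=0$ in the limit), and then doubling smoothly in Gaussian normal coordinates with an explicit verification (Claim~\ref{claimsmooth5482}) that all normal derivatives $\pr_{z^3}^m g_{AB}$ vanish on the interface. Your distributional route may be workable, but the bootstrap-first approach is considerably cleaner. (Also, the Neumann estimate you want is Theorem~\ref{thm:grisvard1low}, not Corollary~\ref{fullldg6858}.)
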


\begin{remark} 
Part (1) of Theorem \ref{thm:higherintroexharm} follows directly from Theorem \ref{thm:main1} by picking $Q< \varep$, see also point (3) of Proposition \ref{prop:normprops}.
\end{remark}

Theorem \ref{thm:main1} is proved by contradiction and consists of four steps.
\begin{itemize}
\item Section \ref{contraassump}. Existence of a converging subsequence in the pointed $C^{0,\a}$-topology for $\a \in (0,1/2)$.
\item Section \ref{secstrong111}. Improvement to strong convergence in the $H^2$-topology.
\item Section \ref{rigidity}. Rigidity of the limit manifold.
\item Section \ref{sec:contradiction1}. The contradiction.
\end{itemize}

The Sections \ref{secstrong111} and \ref{rigidity} are independent of each other and lead to the contradiction in Section \ref{sec:contradiction1}.


\subsection{Existence of a converging subsequence in the pointed $C^{0,\a}$-topology for $\a \in (0,1/2)$} \label{contraassump} In this section, we use the contradiction assumption to construct a sequence $(M_i,g_i,p_i)$ of smooth pointed Riemannian manifolds which converge in the pointed $C^{0,\a}$-topology for $\a \in (0,1/2)$ to a limit manifold $(M,g,p)$. Subsequently, in Section the strength of convergence is improved to $H^2$ and in Section \ref{rigidity} the limit manifold $(M,g,p)$ is analysed.

Assume for contradiction there exists
\begin{itemize}
\item a sequence of reals $r_i >0$ such that $\lim\limits_{i\to \infty} r_i = 0$,
\item a sequence $(M_i, h_i)$ of compact smooth Riemannian manifolds with boundary which satisfy \eqref{eq:bounds1} and are such that for all $i$,
\begin{align*}
\Vert (M_i, h_i) \Vert^{b.h.}_{H^2,r_i} > Q.
\end{align*}
\end{itemize}

The $(M_i,h_i)$ are smooth and compact, so their norm is finite. Therefore, using the continuity of the norm, see (2) of Proposition \ref{prop:harmnormprops}, we can decrease $r_i >0$ such that for all $i\geq1$
\begin{align*}
\Vert (M_i, h_i) \Vert^{b.h.}_{H^2,r_i} =Q.
\end{align*}

By (1) of Proposition \ref{prop:harmnormprops}, the conformally rescaled metric $g_i := r_i^{-2} h_i$ satisfies
\begin{align*}
\Vert (M_i, g_i) \Vert^{b.h.}_{H^2,1}= \Vert (M_i, h_i) \Vert^{b.h.}_{H^2,r_i}= Q.
\end{align*} 
First, by Definition \ref{def:harmnorm}, this implies that for each $i\geq 1$ there is a point $p_i \in M_i$ such that
\begin{align*}
\Vert (M_i, g_i,p_i) \Vert^{b.h.}_{H^2,1} \geq Q/2.
\end{align*}
Second, by continuity of the norm, see Proposition \ref{prop:harmnormprops} and \eqref{continuityinr3434}, there is a $Q\leq \tilde{Q} < \infty$ such that for all $i\geq1$
\begin{align}
\Vert (M_i, g_i) \Vert^{b.h.}_{H^2,2}= \tilde{Q} > Q.
\label{boundedQexact}
\end{align} 

At this point, we continue at scale $r=2$ because this gives us the leeway to apply interior elliptic estimates in $\pr M$ to deduce estimates at scale $r=1$.

By \eqref{boundedQexact} and Corollary \ref{corollary:fundamentalharm}, there is a pointed Riemannian manifold with boundary $(M,g,p)$ such that subsequentially as $i\to \infty$,
\begin{align*}
(M_i,g_i,p_i) \to (M,g,p) \text{ as } i \to \infty \text{ in the $C^{0,\a}$-topology}
\end{align*}
for any $\a \in (0,1/2)$. This convergence comes together with countable atlases of $M$ and $M_i$, $i\geq1$, consisting of boundary harmonic $H^3$-charts
\begin{align}\begin{aligned} 
\varphi_{n}: B^{+}(x_n,2) &\to U \subset M, \\ 
\varphi_{in}: B^+(x_{n},2) &\to U_{in} \subset M_i,
\end{aligned} \label{eqatlases23435} \end{align}
respectively, where each $\varphi_{in}$ satisfies (h1)-(h3) of Definition \ref{def:harmnorm} with constant $\tilde Q$, see \eqref{boundedQexact}, and for $\a \in (0,1/2)$,
\begin{align*}
\varphi_{in} \to \varphi_{n} \text{ as } i \to \infty \text{ in the $C^{1,\a}$-topology.}
\end{align*}
Denote the coordinate components of $g_i$, $i\geq1$, and $g$ on $B^+(x_n,2)$ by
\begin{align*}
g_{in} := (\varphi_{in})^\ast g_i, g_{n} := (\varphi_{n})^\ast g.
\end{align*}

We now show three important properties of the constructed sequence. 
\begin{enumerate}

\item By the relation $g_i := r_i^{-2} h_i$ and the uniform bounds \eqref{eq:bounds1}, it follows that as $i \to \infty$,
\begin{align} \begin{aligned}
\Vert \RRRic_{in} \Vert_{\LL^2(B^+(x_n,2))} &\to 0,\\
\Vert \Th_{in} \Vert_{\LL^4(\ubp(x_n,2))} &\to 0,
\end{aligned} \label{eq:convquantzero}
\end{align}
where $\Theta_{in}$ denotes the second fundamental forms of $\ubp(x_n,2)$ with respect to $g_{i}$. In particular, it follows that for the limit Riemannian manifold $(M,g)$,
\begin{align} \label{limitqualbounds3434}
\RRRic=0 \text{ on }M \text{ and } \Th=0, K=0 \text{ on }\pr M,
\end{align}
where $K$ denotes the Gauss curvature of $\pr M$. \\

\item By \eqref{boundedQexact}, the compactness of bounded closed sets for the weak topology and Lemma \ref{sobolevdomainlemma}, it holds that for each $n$, (subsequentially) as $i\to \infty$,
\begin{align} \begin{aligned}
g_{in} &\rightharpoonup g_{n} &&\text{ in } \HH^2(B^+(x_n,2)),&& \\
g_{in} &\to g_{n} &&\text{ in } \HH^1(B^+(x_n,2)),&& \\
g_{in} &\to g_{n} &&\text{ in } \CC^0(B^+(x_n,2)), &&\\
\gd_{in} &\to \gd_n &&\text{ in } \HH^{5/4}(\ubp(x_n,2)) \\
\end{aligned} \label{contraconv1} \end{align}
In particular, we have for the limit metric
\begin{align*}
\Vert g_n \Vert_{\HH^2(B^+(x_n,2))} \lesssim \tilde{Q}.
\end{align*}

\item We have the following \emph{volume growth estimate} on $(M,g)$. For all $r>0$, it holds that 
\begin{align} \label{eqvolumegrowth}
\mathrm{vol}_{g} \Big( B_g(p,r) \Big) \geq \frac{4 \pi}{3} v r^3.
\end{align}

\noindent Indeed, since $(M_i,h_i)$ satisfies \eqref{eq:bounds1}, for each integer $i\geq1$ and point $p_i \in M_i$, we have
\begin{align} \label{vol1i}
\mathrm{vol}_{h_i} \Big( B_{h_i}(p_i,r) \Big) \geq \frac{4 \pi}{3}v r^3 \text{ for all } r\leq1.
\end{align}
For the rescaled $(M_i,g_i)$ this implies that for all reals $0\leq r\leq (r_i)^{-1}$ and points $p_i \in M_i$,
\begin{align*}
\mathrm{vol}_{g_i} \Big( B_{g_i}(p_i,r) \Big) =  \frac{1}{r_i^3} \mathrm{vol}_{h_i} \Big( B_{h_i}(p_i,rr_i) \Big)
\geq  \frac{1}{r_i^3} \frac{4 \pi}{3} v (rr_i)^3
= \frac{4 \pi}{3}v r^3,
\end{align*} 
where we used that $rr_i \leq 1$ to apply \eqref{vol1i}. \\

\noindent Using that $\frac{1}{r_i} \to \infty$ as $i\to \infty$, and $(M_i,g_i,p_i) \to (M,g,p)$ as $i\to \infty$ in the pointed $C^{0,\a}$-topology, we have for each fixed $r\geq 0$ (for $i\geq1$ sufficiently large) as $i\to \infty$,
\begin{align*}
\mathrm{vol}_{g_i} \Big( B_g(p_i,r) \Big) \to \mathrm{vol}_{g} \Big( B_g(p,r) \Big),
\end{align*}
see also the proof of Lemma 11.4.9 in \cite{PetersenBook} for the case of manifolds without boundary. This proves \eqref{eqvolumegrowth}.

\end{enumerate}

\subsection{Improvement to strong convergence in the pointed $H^2$-topology} \label{secstrong111} In this section, we prove that
\begin{align} \label{convstrongab3425}
(M_i, g_i, p_i) \to (M,g,p) \text{ in the pointed } H^2\text{-topology} \text{ as } i \to \infty.
\end{align}
To prove \eqref{convstrongab3425}, it suffices by Proposition \ref{prop:localtoglobal} to show that locally, in the charts $\varphi_{in}$ and $\varphi_n$, see \eqref{eqatlases23435},
\begin{align} \label{strongconvergence}
g_{in} \to g_{n} \text{ in } \HH^2(B^+(x_n,1)) \text{ as } i \to \infty.
\end{align}
Indeed, Proposition \ref{prop:localtoglobal} can be applied because the charts $\varphi_{in}$ have uniform bounds, see \eqref{boundedQexact}, and it is shown independently in Section \ref{rigidity} that $(M,g,p)$ is a smooth pointed Riemannian $3$-manifold. 

For the rest of this section, we prove \eqref{strongconvergence} for a fixed $n$ and hence leave away the index $n$ for ease of presentation. 

We prove \eqref{strongconvergence} in three steps.
\begin{itemize}
\item Section \ref{gaussconvsec}. Strong convergence of the Gauss curvature of $\pr M$. 
\item Section \ref{prelimgdAB}. Strong convergence of the coordinate components $\gd_{AB}$ and $g_{AB}$. Here $\gd$ denotes the induced metric on $\pr M$, see also \eqref{eaexpresseion98976}.
\item Section \ref{g3A}. Strong convergence of the components $g^{33}$ and $g^{A3}$.
\end{itemize}

In the following calculations in Sections \ref{gaussconvsec}-\ref{g3A}, we tacitly use that by \eqref{boundedQexact},
\begin{align*} 
(g_i)^{lm}, (g_i)_{lm}, \sqrt{\det g_i} \text{ for } l,m=1,2,3, \, i \geq1
\end{align*}
are uniformly bounded in $B^+(x,2)$. 


\subsubsection{Strong convergence of the Gauss curvature} \label{gaussconvsec}

Let $K$ and $K_i$ be the Gauss curvature of $\pr M \subset (M,g)$ and $\pr M_i \subset (M_i,g_i)$, respectively. In this section we show that 
$$K_i \to K \text{ in } H^{-1/2}(\ubp(x,11/6)) \text{ as } i\to \infty.$$ 
By \eqref{limitqualbounds3434}, $K=0$ on $\pr M$, and therefore it suffices to prove that
\begin{align}
\Vert K_i \Vert_{H^{-1/2}(\ubp(x,11/6))} \to 0 \text{ as } i \to \infty. \label{eq:gaussconv4}
\end{align} 
However, \eqref{eq:gaussconv4} follows from \eqref{eq:convquantzero} with the trace estimate \eqref{Kestimate}, that is,
\begin{align*} 
\Vert K_i \Vert_{H^{-1/2}(\ubp(x,11/6))} \lesssim \Vert \RRRic_i \Vert_{L^2(B^+(x,2))} + \Vert \Th_i \Vert_{{L^4}(\ubp(x,2))}.
\end{align*}


\subsubsection{Strong convergence of $\gd_{AB}$ and $g_{AB}$ components} \label{prelimgdAB}
In this section we show that as $i\to \infty$,
\begin{align}
\Vert \gd_{iAB} - \gd_{AB} \Vert_{H^{3/2}(\ubp(x,10/6) )} &\to 0, \label{eq:firstconv1230} \\
\Vert g_{iAB} - g_{AB} \Vert_{H^2(B^+(x,9/6))} &\to 0. \label{eq:firstconv123} 
\end{align}

First consider \eqref{eq:firstconv1230}. By Lemma \ref{lemmaharmexpr} and \eqref{limitqualbounds3434}, $\gd_i$ and $\gd$ satisfy on $\ubp(x,2)$
\begin{align} \begin{aligned}
\half \Ld_i \gd_{iAB} + \Qdd_{iAB} &= - K_i, \\
\half \Ld \gd_{AB} + \Qdd_{AB} &= 0.
\end{aligned} \label{eq354288} \end{align}
By the interior elliptic estimate of Theorem \ref{fracellestlit} applied to \eqref{eq354288},
\begin{align} \begin{aligned}
&\Vert \gd_{iAB} - \gd_{AB} \Vert_{H^{3/2}(\ubp(x,10/6))} \\
\lesssim&  \Vert \Ld_i ( \gd_{iAB}- \gd_{AB} )\Vert_{H^{-1/2}(\ubp(x,11/6))} + \Vert \gd_{iAB} -\gd_{AB} \Vert_{H^{1/2}(\ubp(x,11/6))} \\
\lesssim& \Vert (\Ld_i - \Ld) \gd_{AB} \Vert_{H^{-1/2}(\ubp(x,11/6))} + \Vert \Qdd_{iAB}-\Qdd_{AB} \Vert_{H^{-1/2}(\ubp(x,11/6))}  \\
&+ \Vert K_i \Vert_{H^{-1/2}(\ubp(x,11/6))}  + \Vert \gd_{iAB} -\gd_{AB} \Vert_{H^{1/2}(\ubp(x,11/6))}.
\end{aligned} \label{gdest40} \end{align}
By \eqref{contraconv1}, \eqref{eq:gaussconv4} and Lemmas \ref{sobolevdomainlemma} and \ref{LemmaQConvergenceEstimate}, as $i\to \infty$,
\begin{align} \begin{aligned}
\Vert (\Ld_i - \Ld) \gd_{AB} \Vert_{H^{-1/2}(\ubp(x,11/6))} &\to 0, \\
\Vert \Qdd_{iAB}-\Qdd_{AB} \Vert_{H^{-1/2}(\ubp(x,11/6))} &\to 0, \\
\Vert K_i \Vert_{H^{-1/2}(\ubp(x,11/6))} &\to 0.
\end{aligned} \label{eqbdryconvreg3492045} \end{align}

Together \eqref{gdest40} and \eqref{eqbdryconvreg3492045} prove that
\begin{align} \label{gdest4}
\Vert \gd_{iAB} - \gd_{AB} \Vert_{H^{3/2}(\ubp(x,10/6))} \to 0 \text{ as } i \to \infty,
\end{align}
which finishes the proof of \eqref{eq:firstconv1230}.

We turn to the proof of \eqref{eq:firstconv123}. By Lemma \ref{lemmaharmexpr} and \eqref{limitqualbounds3434}, it holds that on $B^+(x,2)$,
\begin{align} \begin{aligned}
\half \triangle_i g_{iAB} + Q_{iAB} &= -\RRRic_{iAB}, \\
\half \triangle g_{AB} + Q_{AB} &= 0.
\end{aligned} \label{eqe534352} \end{align}
By the global elliptic estimates of Theorem \ref{cor:p33est24231} with \eqref{eqe534352}, we have
\begin{align*}
&\Vert g_{iAB} - g_{AB} \Vert_{H^2(B^+(x,9/6))} \\
\lesssim & \Vert \triangle_i (g_{iAB}-g_{AB}) \Vert_{L^2(B^+(x,10/6))} + \Vert g_{iAB}-g_{AB} \Vert_{H^{3/2}(\ubp(x,10/6))} \\
\lesssim &  \Vert (\triangle_i - \triangle) g_{AB} \Vert_{L^2(B^+(x,10/6))} + \Vert Q_{iAB}-Q_{AB} \Vert_{L^2(B^+(x,10/6))} \\
&+ \Vert \RRRic_i \Vert_{L^2(B^+(x,10/6))} + \Vert \gd_{iAB} -\gd_{AB} \Vert_{H^{3/2}(\ubp(x,10/6))} \\
&+ \Vert g_{iAB} - g_{AB} \Vert_{H^1(B^+(x,10/6))},
\end{align*}
where we used that $\gd_{iAB}= g_{iAB}$ on $\ubp(x,2)$. By \eqref{gdest4}, \eqref{eq:convquantzero}, \eqref{contraconv1} and using that by Lemmas \ref{sobolevdomainlemma} and \ref{LemmaQConvergenceEstimate}, as $i \to \infty$,
\begin{align*}
\Vert (\triangle_i - \triangle) g_{AB} \Vert_{L^2(B^+(x,10/6))} &\to0, \\
\Vert Q_{iAB}-Q_{AB} \Vert_{L^2(B^+(x,10/6))} &\to 0, \\
\Vert \RRRic_i \Vert_{L^2(B^+(x,10/6))} &\to 0,
\end{align*}
it follows that 
\begin{align*}
\Vert g_{iAB} - g_{AB} \Vert_{H^2(B^+(x,9/6))} \to 0 \text{ as } i \to \infty,
\end{align*}
which finishes the proof of \eqref{eq:firstconv123}.


\subsubsection{Strong convergence of $g^{33}$ and $g^{3A}$ components} \label{g3A}

In this section, we show that as $i\to \infty$,
\begin{align}
\Vert g_i^{33} - g^{33} \Vert_{H^2(B^+(x,8/6))} &\to 0, \label{eq33cov}\\
\Vert g_i^{3A} - g^{3A} \Vert_{H^2(B^+(x,7/6))} &\to 0.\label{eq3Acov}
\end{align}

On the one hand, by Lemma \ref{lemmaharmexpr} and \eqref{limitqualbounds3434}, we have on $B^+(x,2)$ for $j=1,2,3$
\begin{align}  \begin{aligned}
\half \triangle_{g_i} g_{i}^{3j} + Q_i^{3j} &= -\RRRic_{i}^{33}, \\
\half \triangle_g g^{3j} + Q^{3j} &=0.
\end{aligned} \label{eq93920002} \end{align}

On the other hand, by Lemma \ref{NeumannHarmonic}, on $\ubp(x,2)$ for all $i\geq1$,
\begin{align} \begin{aligned}
N_i \left( g_i^{33} \right) &= 2 \tr \Th_i g_i^{33},  \\
N_i \left( g_i^{3A} \right) &=  \tr \Th_i g_i^{3A} - \half \frac{1}{\sqrt{g_i^{33}}} g_i^{Am} \pr_m g_i^{33}. 
\end{aligned}\label{eq939200022} \end{align}

First consider \eqref{eq33cov}. By the elliptic estimates of Theorem \ref{thm:grisvard1low} with \eqref{eq93920002} we have
\begin{align} \begin{aligned}
&\Vert g_i^{33} - g^{33} \Vert_{H^2(B^+(x,8/6))} \\
\lesssim& \Vert \triangle_g (g^{33} - g_i^{33}) \Vert_{L^2(B^+(x,9/6))} + \Vert g^{33} - g_i^{33} \Vert_{H^1(B^+(x,9/6))} + \Vert N(g^{33} - g_i^{33}) \Vert_{H^{1/2}(\ubp(x,9/6))} \\
\lesssim& \Vert ( \triangle_g- \triangle_{g_i}) g^{33}  \Vert_{L^2(B^+(x,9/6))} + \Vert Q_i^{33}-Q^{33} \Vert_{L^2(B^+(x,9/6))} + \Vert \RRRic_i^{33} \Vert_{L^2(B^+(x,9/6))} \\
& + \Vert g^{33} - g_i^{33} \Vert_{H^1(B^+(x,9/6))}+ \Vert N(g^{33} - g_i^{33}) \Vert_{H^{1/2}(\ubp(x,9/6))} \\
\end{aligned} \label{calceq564} \end{align}
The last term on the right-hand side of \eqref{calceq564} is bounded by \eqref{eq939200022} and Lemmas \ref{sobolevdomainlemma} and \ref{lemmaMoserEstimates} as
\begin{align} \begin{aligned}
& \Vert N(g^{33} - g_i^{33}) \Vert_{H^{1/2}(\ubp(x,9/6))} \\
\lesssim& \Vert g^{33} \tr \Th - g_i^{33} \tr \Th_i \Vert_{H^{1/2}(\ubp(x,9/6))} + \Vert (N-N_i)g^{33} \Vert_{H^{1/2}(\ubp(x,9/6))} \\
\lesssim& \Vert \tr \Th_i \Vert_{H^{1/2}(\ubp(x,9/6))} + \Vert g^{3j} - g_i^{3j} \Vert_{H^{5/4}(\ubp(x,9/6))} \Vert \pr g^{33} \Vert_{H^{1/2}(\ubp(x,9/6))} \\
\lesssim& \Vert \tr \Th_i \Vert_{H^{1/2}(\ubp(x,9/6))} + \Vert g^{3j} - g_i^{3j} \Vert_{H^{5/4}(\ubp(x,9/6))} \Vert g^{33} \Vert_{H^{2}(B^+(x,9/6))},
\end{aligned} \label{EQRRRR1} \end{align}
where we used that $\tr \Th=0$ and $N= -(g^{33})^{-1/2} \nab x^3$, see \eqref{unitnormal} and \eqref{limitqualbounds3434}. 

By \eqref{contraconv1} and Lemmas \ref{sobolevdomainlemma}, \ref{lemmaMoserEstimates} and \ref{Qcalcul}, as $i\to \infty$, 
\begin{align} \begin{aligned}
\Vert ( \triangle_g- \triangle_{g_i}) g^{33}  \Vert_{L^2(B^+(x,9/6))} \to&0, \\
\Vert Q_i^{33}-Q^{33} \Vert_{L^2(B^+(x,9/6))} \to&0.
\end{aligned} \label{EQRRRR2} \end{align}
Moreover, using that in general, $\Th_{AB}= \frac{1}{2a} \pr_3 (g_{AB})  - \frac{1}{2a}\left( \Lied_{\be} \gd \right)_{AB}$, it follows from \eqref{eq:firstconv123} that
\begin{align}
\Vert \tr \Th_i - \tr \Th \Vert_{H^{1/2}(x,9/6)} \to 0 \text{ as } i \to \infty. \label{eqstrongtrthestimateexistence}
\end{align}

Therefore, plugging \eqref{EQRRRR1} into \eqref{calceq564} and using \eqref{EQRRRR2} and \eqref{eqstrongtrthestimateexistence}, we have
\begin{align*}
\Vert g^{33} - g_i^{33} \Vert_{H^2(B^+(x,8/6))} \to 0 \text{ as } i \to \infty,
\end{align*}
which finishes the proof of \eqref{eq33cov}.

The proof of \eqref{eq3Acov} is analogous and left to the reader. 

To summarise, in the above we proved that, as $i\to \infty$,
\begin{align*}
\Vert g_{iAB} - g_{AB} \Vert_{H^2(B^+(x,9/6))} + \Vert g_{i}^{3A} - g^{3A}\Vert_{H^2(B^+(x,8/6))} +  \Vert g_{i}^{3A} - g^{3A}\Vert_{H^2(B^+(x,7/6))}&\to0.
\end{align*}
By Lemma \ref{upstairscontrol}, this implies
\begin{align*}
\Vert g_i - g \Vert_{\HH^2(B^+(x,1))} \to 0 \text{ as } i \to \infty,
\end{align*}
which finishes the proof of \eqref{strongconvergence}.

\subsection{Rigidity of the limit manifold} \label{rigidity}

In this section we prove that the limit manifold
\begin{align} \label{rigidityequality}
(M,g) \simeq (\mathbb{H}^+,e).
\end{align}
The proof of \eqref{rigidityequality} has three steps.
\begin{itemize}
\item Section \ref{extregweaksol1}. Proof of smoothness of $g$.
\item Section \ref{smoothrig}. Doubling of $(M,g)$ and construction of a smooth atlas.
\item Section \ref{sec:manifolddo1}. Rigidity of the limit manifold.
\end{itemize}


\subsubsection{Proof of smoothness of $g$} \label{extregweaksol1} 

We recall from Section \ref{contraassump} that for all points $p \in M$ in the limit manifold $(M,g)$, there exists a boundary harmonic $H^3$-chart $\varphi: B^+(x,1) \to U \subset M$ with $\varphi(x)=p$ such that for the constant $0< \tilde{Q} < \infty$,
\begin{itemize}
\item on $B^+(x,1)$, 
\begin{align*}
e^{-2\tilde{Q}} e_{ij} \leq g_{ij} \leq e^{2\tilde{Q}} e_{ij}.
\end{align*}
\item for all multi-indices $I$ with $1 \leq \vert I \vert \leq 2$, 
\begin{align*}
\max\limits_{i,j=1,2,3} \Vert \pr^I g_{ij} \Vert_{L^2(B^+(x,1))} \leq \tilde{Q}.
\end{align*}
\end{itemize}

By Lemma \ref{lemmaharmexpr}, the metric components $g_{ij}$ satisfy
\begin{align*}
\half \triangle_g g_{ij } + Q_{ij} &= 0 \text{ on } B^+(x,1),\\
\half \Ld_\gd \gd_{AB} + \Qdd_{AB} &= 0 \text{ on } \ubp(x,1).
\end{align*}
These equations can be used to bootstrap regularity to $g \in C^\infty(B^+(x,1))$. We postpone this bootstrapping to Section \ref{sec:harmregularitymflds}, see Proposition \ref{thm:main1Higherreg}. It follows in particular that $(M,g)$ is a smooth Ricci-flat (and hence flat) Riemannian $3$-manifold with boundary.

\begin{remark} \label{uniformbounds02454962111} 
The modulus of continuity of $g$ in $C^k$, for $k\geq 0$, is controlled in terms of $\tilde{Q}$, independently of the point $p\in M$.
\end{remark}

\subsubsection{The manifold double of $(M,g)$} \label{smoothrig} In this section, we define the manifold double $M_{double}$ of $(M,g)$ and show that it is a smooth Riemannian manifold without boundary.

The manifold double $M_{double}$ is topologically defined as
\begin{align*}
M_{double} := \{ 0,1\} \times M
\end{align*}
where the boundaries $\{ 0 \} \times \pr M$ and $\{1\} \times \pr M$ are identified.

Given that $(M,g)$ is a Riemannian manifold with boundary, the sets $\{ 0\} \times M$ and $\{ 1\} \times M$, equipped with metric $g$, are also Riemannian manifolds with boundary.

In the following, we construct interior charts of $M_{double}$ centered on points $p\in \{ 0 \} \times \pr M = \{1\} \times \pr M$ and show that the metric $g$ on $M$ extends smoothly to a Riemannian metric on $M_{double}$.


Let $p \in \{ 0\} \times \pr M$. By Section \ref{extregweaksol1}, there is a boundary harmonic chart
\begin{align*}
\varphi: B^+(0,1) \to U \subset M
\end{align*}
with $\varphi(0)=p$ such that $g_{ij} \in C^{\infty}(B^+(0,1))$.

Using this smoothness, we can construct so-called \emph{Gaussian normal coordinates} $(z^1,z^2,z^3)$ in a neighbourhood $\UU \subset \{0\} \times M$ of $p$, see for example Section 3.3 in \cite{Wald}. They are such that for a small real $\de>0$,
\begin{align*}
\mathcal{U} &= \{ z \in \ubp(0,1/2) \times [0,\de ]\}, \\
\mathcal{U} \cap \pr M &= \{ z\in \ubp(0,1/2) \times \{0\}\},\\
\nab_{\pr_{z^3}} \pr_{z^3} &=0, \\
\pr_{z^3} \vert_{\mathcal{U} \cap \pr M} &\text{ is normal to } \mathcal{U} \cap \pr M, \\
g(\pr_{z^3}, \pr_{z^3})&=1.
\end{align*}
We note that the scale $\de>0$ can be bounded from below by the modulus of continuity of the metric $g$, that is, $\tilde{Q}$, see also Remark \ref{uniformbounds02454962111}. In these coordinates, the metric $g$ is given by 
\begin{align} \label{pullbackin24343}
g =  (dz^3)^2 + \gd^z_{AB} dz^A dz^B,
\end{align}
where $\gd^z$ denotes the induced metric on level sets of $z^3$. These coordinates are also called \emph{zero-shift coordinates}, because $\be=0$ in $\ubp(0,1/2) \times [0, \de]$, see \eqref{eaexpresseion98976}.

These coordinates are a local boundary chart of $ \{ 0\} \times M$ around $p$. By applying the same construction around $p$ in $ \{ 1\} \times M$ and identifying the coordinates $z^{1},z^2$ on $\pr M$, we can extend the above coordinates to coordinates on $M_{double}$, with chart
\begin{align} \label{eqcoordinatechart0340235}
\varphi: \ubp(0,1/2) \times [-\de,\de] \to M_{double}.
\end{align}
Indeed, the geodesic construction of $z^3$ and the consequent transport of $(z^1,z^2)$ along $z^3$ yields smooth coordinates around $p$ in $M_{double}$. \\

The pullback of $g$ by the chart $\varphi: \ubp(0,1/2) \times [-\de,\de] \to M_{double}$ in \eqref{eqcoordinatechart0340235} is given by 
\begin{align} \begin{aligned}
(\varphi)^*g_{ij}(z^1,z^2,z^3) := \begin{cases} g_{ij}(z^1,z^2,z^3), & \text{if } z^3\geq 0, \\ 
g_{ij}(z^1,z^2,-z^3) &\text{else}, \end{cases}
\end{aligned} \label{eq:defexteh}\end{align}
where $g_{ij}$ on $\ubp(0,1/2) \times [0,\de]$ are the pullback components of $g$ as in \eqref{pullbackin24343}.

\begin{claim} \label{claimsmooth5482}
The pullback metric $g$ in \eqref{eq:defexteh} is smooth on $\ubp(0,1/2) \times [-\de,\de]$.
\end{claim}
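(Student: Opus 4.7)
The plan is to show that in the Gaussian normal coordinates used to build the chart \eqref{eqcoordinatechart0340235}, the pullback of $g$ from $\{0\}\times M$ in \eqref{pullbackin24343} is in fact \emph{independent} of $z^3$ on $\ubp(0,1/2)\times[0,\de]$, and likewise on the $\{1\}\times M$ side. Once this is established, both halves of the doubled metric are product metrics with identical tangential components, so the definition \eqref{eq:defexteh} yields a globally $z^3$-independent (hence trivially smooth) metric on the full neighbourhood $\ubp(0,1/2)\times[-\de,\de]$.

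To establish $z^3$-independence of $\gd^z$, I would first combine the smoothness of $g$ postponed to Section \ref{sec:harmregularitymflds} with the identity $\RRRic=0$ on $M$ from \eqref{limitqualbounds3434}. Since $\dim M=3$, the Weyl tensor vanishes identically and the Riemann tensor is algebraically determined by the Ricci tensor, so Ricci-flatness is equivalent to flatness; hence $(M,g)$ is flat. Next, I would invoke the standard Riccati evolution equation satisfied in zero-shift, unit-lapse Gaussian normal coordinates by the shape operator $\mathcal{W}^A_{\,\,B} := -\gd^{z\,AC}\Th^z_{CB}$ of the level sets $\{z^3=\mathrm{const.}\}$, schematically
\begin{equation*}
\pr_{z^3} \mathcal{W}^A_{\,\,B} = \mathcal{W}^A_{\,\,C}\mathcal{W}^C_{\,\,B} - R^A_{\,\,3B3},
\end{equation*}
derived from the Gauss--Codazzi--Mainardi equations. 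The flatness of $(M,g)$ annihilates the curvature source term, and the totally-geodesic condition $\Th=0$ on $\pr M$ in \eqref{limitqualbounds3434} supplies the initial datum $\mathcal{W}|_{z^3=0}=0$. Uniqueness for this quadratic first-order ODE (pointwise in $(z^1,z^2)$) forces $\mathcal{W}\equiv 0$ on $\ubp(0,1/2)\times[0,\de]$, and by \eqref{secondformexpr3} with $\be=0$ and $a=1$ this reads $\pr_{z^3}\gd^z_{AB}\equiv 0$.

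Consequently, on the $\{0\}\times M$ side the metric takes the product form $(dz^3)^2 + \gd_{AB}(z^1,z^2)\,dz^A dz^B$. Repeating the argument on $\{1\}\times M$ and matching tangential data at the common boundary gives the identical product metric on $\ubp(0,1/2)\times[-\de,0]$, and the concatenation \eqref{eq:defexteh} is therefore a $z^3$-independent metric on $\ubp(0,1/2)\times[-\de,\de]$, trivially smooth across $\{z^3=0\}$. The main obstacle I anticipate is purely a bookkeeping one: pinning down the correct signs in the Riccati equation under the paper's conventions $\Th(X,Y) = -g(X,\nab_Y N)$ with outward normal $N=-\pr_{z^3}$ (and checking that, upon doubling, $\pr_{z^3}$ playing the role of inward versus outward normal on the two sides does not affect the identification of $\Th=0$ with zero initial data). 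Either sign of the quadratic term leaves the conclusion intact, since zero initial data forces the unique ODE solution to vanish regardless.
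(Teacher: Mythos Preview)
Your argument is correct, and the core content coincides with the paper's: both exploit that $(M,g)$ is flat (from $\RRRic=0$ in dimension $3$) together with $\Th=0$ on $\pr M$, expressed in Gaussian normal coordinates, to conclude that the even reflection is smooth. The packaging differs slightly. The paper works inductively \emph{at} $z^3=0$: starting from the identity $g(\pr_{z^A},\nab_{\pr_{z^3}}\pr_{z^B})=\tfrac12\,\pr_{z^3}g_{AB}$ and differentiating repeatedly in $z^3$ (using flatness to kill the curvature term and geodesicity of $\pr_{z^3}$), it shows $\pr_{z^3}^m g_{AB}\big|_{z^3=0}=0$ for all $m\ge1$, which is exactly what the reflection needs. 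You instead integrate the Riccati ODE for the shape operator along the normal direction, obtaining the stronger global conclusion $\pr_{z^3}\gd^z_{AB}\equiv0$ on all of $[0,\de]$, i.e.\ a genuine product metric, from which smoothness across $\{z^3=0\}$ is immediate. Your route is a touch more conceptual and delivers more (the product structure), while the paper's stays closer to the bare requirement for smoothness of the reflected metric; the underlying identities are the same Jacobi/Riccati computations, and your remark about sign conventions is well taken and correctly resolved.
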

It suffices to prove that on $\ubp(0,1/2) \times \{0\}$
\begin{align} \label{dervan1}
\pr^m_{z^3} g_{AB} = 0
\end{align}
for all $m \geq 0$.

On the one hand, on all of $\ubp(0,1/2) \times [-\de, \de]$, it holds that
\begin{align} \label{drel1}
g(\pr_{z^A}, \nab_{\pr_{z^3}} \pr_{z^B}) = \half \left( \pr_{z^A} g_{3B} + \pr_{z^3}g_{AB} - \pr_{z^B} g_{3A} \right) =\half \pr_{z^3} g_{AB},
\end{align}
where we used that $g_{3A}=0$ in the zero-shift coordinates. Moreover, differentiating \eqref{drel1} by $\pr_{z^3}$ yields
\begin{align*}
\half \pr^2_{z^3} g_{AB} =& g(\nab_{\pr_{z^3}} \pr_{z^A}, \nab_{\pr_{z^3}} \pr_{z^B}) + g(\pr_{z^A}, \nab_{\pr_{z^3}}^2 \pr_{z^B}) \\
=&g(\nab_{\pr_{z^3}} \pr_{z^A}, \nab_{\pr_{z^3}} \pr_{z^B})+ g(\pr_{z^A},\nab_{\pr_{z^3}} \nab_{\pr_{z^B}} \pr_{z^3}) \\
=& g(\nab_{\pr_{z^3}} \pr_{z^A}, \nab_{\pr_{z^3}} \pr_{z^B}) + g(\pr_{z^A},\mathrm{Rm}(\pr_{z^3}, \pr_{z^B}) \pr_{z^3}) + g(\pr_{z^A},\nab_{\pr_{z^B}}\nab_{\pr_{z^3}}\pr_{z^3}) \\
=&g(\nab_{\pr_{z^3}} \pr_{z^A}, \nab_{\pr_{z^3}} \pr_{z^B}),
\end{align*}
where we used that $(M,g)$ is flat and $\pr_{z^3}$ geodesic.

On the other hand, because $\be=0$ and $\Th=0$ on $z^3=0$, it follows by \eqref{secondformexpr3} that on $\{z^3=0\}$
$$\pr_{z^3} g_{AB} =0, \qquad \nab_{\pr_{z^3}} \pr_{z^B}=0.$$
This shows that $\pr_{z^3}^2 g_{AB}=0$ at $\{z^3=0\}$. The statement \eqref{dervan1} follows by continued differentiation of \eqref{drel1}. This finishes the proof of Claim \ref{claimsmooth5482}.
This finishes the proof that $(M_{double},g)$ is a smooth flat Riemannian $3$-manifold without boundary. Further, by the fact that $(M,g)$ is a complete Riemannian manifold, it follows that $(M_{double},g)$ is also complete. 


\subsubsection{Conclusion of the rigidity of the limit manifold} \label{sec:manifolddo1} In this section we conclude that 
\begin{align} \label{id1id}
(M_{double},g) \simeq (\RRR^3,e).
\end{align}

We recall that in the previous sections, we showed that $(M_{double},g)$ is a complete smooth flat Riemannian $3$-manifold without boundary. We further recall that by \eqref{eqvolumegrowth} there is a $v>0$ such that for all $r>0$ and $p\in M$,
\begin{align*} 
\mathrm{vol}_{g} \Big( B_g(p,r) \Big) \geq \frac{4\pi}{3} v r^3.
\end{align*}
This lower bound on the volume growth continues to hold on the doubled $(M_{double}, g)$.

\begin{proposition} 
Let $(M,g)$ be a complete smooth flat Riemannian $3$-manifold such that there is a $v>0$ so that for all reals $r>0$ and points $p \in M$,
\begin{align*}
\mathrm{vol}_{g} \Big( B_g(p,r) \Big) \geq v r^3.
\end{align*}
Then, 
 \begin{align*}
(M,g) \simeq (\RRR^3,e).
\end{align*}
\end{proposition}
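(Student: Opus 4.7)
The plan is to identify $(M,g)$ through its universal cover and then invoke the volume-growth hypothesis to rule out any non-trivial deck transformations. Since $(M,g)$ is a complete, smooth, flat Riemannian $3$-manifold without boundary, the Killing--Hopf theorem gives an isometry $(M,g) \simeq (\RRR^3,e)/\Gamma$, where $\Gamma \subset \mathrm{Isom}(\RRR^3) = O(3) \ltimes \RRR^3$ is a discrete subgroup acting freely and properly discontinuously on $\RRR^3$. Let $\pi : \RRR^3 \to M$ denote the associated Riemannian covering. It suffices to show $\Gamma = \{\mathrm{id}\}$.

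Suppose for contradiction there exists $\gamma \in \Gamma \setminus \{\mathrm{id}\}$, and write $\gamma(x) = Ax + b$ with $A \in O(3)$ and $b \in \RRR^3$. Since $\gamma$ is fixed-point free, a short linear algebra argument forces $V^+ := \ker(I - A) \neq \{0\}$ and the orthogonal projection $b_+$ of $b$ onto $V^+$ to be non-zero; after a change of origin, $\gamma$ assumes the screw-motion normal form $\gamma^n(x) = A^n x + n\, b_+$ for all $n \in \mathbb{Z}$. Decomposing $\tilde p = \tilde p_+ + \tilde p_-$ with $\tilde p_\pm \in V^\pm$ and using that $A$ fixes $V^+$ and acts isometrically on $V^-$, one obtains
\begin{align*}
|\gamma^n \tilde p - \tilde p|^2 = |A^n \tilde p_- - \tilde p_-|^2 + n^2 L^2, \qquad L := |b_+| > 0.
\end{align*}
In particular, the orbit $\{\gamma^n \tilde p\}_{n \in \mathbb{Z}}$ is spaced linearly in $n$, so the counting function $N(r) := \#\{n \in \mathbb{Z} : \gamma^n \tilde p \in B_{\RRR^3}(\tilde p, r/2)\}$ satisfies $N(r) \gtrsim r/L$ as $r \to \infty$.

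Now fix $\tilde p \in \RRR^3$ with $\pi(\tilde p) = p$. Given $q \in B_g(p, r/4)$, the definition of $d_g$ produces a lift $\tilde q_0 \in \pi^{-1}(q) \cap B_{\RRR^3}(\tilde p, r/4)$, and then the $N(r)$ translates $\gamma^n \tilde q_0$ are distinct lifts of $q$ lying in $B_{\RRR^3}(\tilde p, r)$. Applying the multiplicity (change-of-variables) formula to the local isometry $\pi$,
\begin{align*}
\frac{4\pi}{3} r^3 \;=\; \mathrm{vol}_{\RRR^3}\bigl( B_{\RRR^3}(\tilde p, r) \bigr) \;\geq\; N(r) \cdot \mathrm{vol}_g\bigl( B_g(p, r/4) \bigr) \;\gtrsim\; \frac{r}{L} \cdot v \left( \frac{r}{4} \right)^{\!3} \;=\; \frac{v}{64\, L}\, r^4,
\end{align*}
where the last inequality uses the hypothesized volume lower bound on $(M,g)$. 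Letting $r \to \infty$ produces a contradiction, whence $\Gamma = \{\mathrm{id}\}$ and $(M,g) \simeq (\RRR^3, e)$.

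The main obstacle lies in the structural analysis of $\gamma$: freeness of the action must be converted into the existence of a genuine translation direction with strictly positive translation length, and one must then extract \emph{linear} orbit spacing along this direction from the screw-motion normal form. This linear-in-$r$ covering multiplicity is precisely what upgrades the trivial bound $\mathrm{vol}_g(B_g(p,r)) \leq \frac{4\pi}{3} r^3$ (from the covering) to the much sharper $\mathrm{vol}_g(B_g(p,r)) \lesssim r^2$, which defeats the hypothesized cubic lower bound.
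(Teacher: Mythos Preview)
Your proof is correct. The paper does not actually prove this proposition; it simply cites Lemma~11.4.9 in Petersen's book and Theorem~5.4 in Petersen's survey. Your argument is the standard one underlying those references, made fully explicit: pass to the universal cover via Killing--Hopf to obtain $(M,g)\simeq(\RRR^3,e)/\Gamma$, and then show that any nontrivial $\gamma\in\Gamma$ forces sub-cubic volume growth. The structural analysis of a single fixed-point-free Euclidean isometry (freeness $\Rightarrow\ker(I-A)\neq\{0\}$ and $b_+\neq 0$, then reduction to the screw-motion normal form $\gamma^n(x)=A^n x+nb_+$) is carried out cleanly, and the counting argument $N(r)\gtrsim r/L$ combined with the change-of-variables formula for the covering map is exactly right.

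One minor remark: your route extracts linear orbit growth from a \emph{single} element $\gamma$, which is all that is needed here. The literature often appeals instead to Bieberbach's theorem, which produces a finite-index lattice of translations inside $\Gamma$ and hence gives the same (or stronger) sub-cubic bound. Your approach is more elementary in that it avoids Bieberbach entirely, at the modest cost of doing the screw-motion normal form by hand.
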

A proof of this proposition can be found at the end of the proof of Lemma 11.4.9 in \cite{PetersenBook}, see also Theorem 5.4 in \cite{Petersen}.
 
By \eqref{id1id}, the hypersurface $\pr M \subset (\RRR^3,e)$, and further $\Th=0$ on $\pr M$. Therefore $\pr M$ equals a plane in $(\RRR^3,e)$. This shows that $(M,g) \simeq (\mathbb{H}^+,e)$ and in particular,
\begin{align*}
\Vert (M,g) \Vert^{b.h.}_{H^2,1} =0.
\end{align*}


\subsection{The contradiction} \label{sec:contradiction1} In this section, we conclude the proof of Theorem \ref{thm:main1}.

On the one hand, by Sections \ref{contraassump} and  \ref{secstrong111}, the sequence $(M_i,g_i,p_i)$ is such that
\begin{align*}
\Vert (M_i, g_i,p_i) \Vert^{b.h.}_{H^2,1} \geq Q/2,
\end{align*}
and
\begin{align*}
(M_i,g_i,p_i) \to (M,g,p) \text{ as } i \to \infty \text{ in the pointed $H^2$-topology}.
\end{align*}
By Proposition \ref{prop:harmnormprops}, this implies that 
\begin{align*}
\Vert (M,g,p) \Vert^{b.h.}_{H^2,1} \geq Q/2 \neq 0.
\end{align*}

On the other hand, by Section \ref{rigidity}, $(M,g) \simeq (\HHH^+,e)$, so that
\begin{align*}
\Vert (M,g) \Vert^{b.h.}_{H^2,1} = 0.
\end{align*}
This contradiction finishes the proof of Theorem \ref{thm:main1}.


\section{Higher regularity estimates for boundary harmonic coordinates} \label{sec:harmregularitymflds}

The main result of this section is the following. 
\begin{proposition}[Higher regularity of boundary harmonic coordinates] \label{thm:main1Higherreg}
Let $g$ be a Riemannian metric in boundary harmonic coordinates on $B^+(x,r)$ such that for some $\varep_0>0$,
\begin{align*}
\Vert g-e\Vert_{\HH^2(B^+(x,r))} < \varep_0.
\end{align*}
Assume in addition that for an integer $m\geq0$,
\begin{align*}
\sum\limits_{i=0}^m \Vert \nab^{(i)} \RRRic \Vert_{\LL^2(B^+(x,r))} < \infty.
\end{align*}
Then, there exists $\varep>0$ such that if $\varep_0< \varep$, then for all reals $0<r'<r$,
\begin{align*} 
\Vert g \Vert_{\HH^{m+2}(B^+(x,r'))} \leq C_{r',r} \sum\limits_{i=0}^m \Vert \nab^{(i)} \RRRic \Vert_{\LL^2(B^+(x,r))} + C_{r',r,m} \Vert g \Vert_{\HH^2(B^+(x,r))}.
\end{align*}
\end{proposition}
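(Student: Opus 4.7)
My plan is to prove Proposition \ref{thm:main1Higherreg} by induction on $m$, repeating the three-step bootstrap from Section \ref{discussionbdryharm} at each regularity level. The base case $m=0$ is immediate from the hypothesis. For the inductive step, assume the estimate holds at level $m-1$ on $B^+(x,r_4)$ for a chain of intermediate radii $r' < r_1 < r_2 < r_3 < r_4 < r$; then by the induction hypothesis, $g \in \HH^{m+1}(B^+(x,r_4))$ with the appropriate bound. I would then upgrade one derivative through the following four steps.

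\emph{Step 1: Gauss curvature on the boundary.} By the trace Lemma \ref{sobolevtracedomain}, $\RRRic \in \HH^m(B^+(x,r_4))$ yields $\RRRic|_{\ubp} \in \HH^{m-1/2}(\ubp(x,r_4))$, and $\Th$ (one derivative of $g_{AB}$) lies in $\HH^{m-1/2}$ there. The twice contracted Gauss equation
\[
2K = (\tr\Th)^2 - |\Th|^2 + \Rscal - 2\RRRic(N,N),
\]
together with the product estimates of Lemma \ref{lemmaMoserEstimates}, gives $K \in H^{m-1/2}(\ubp(x,r_4))$ with norm controlled by $\sum_{i=0}^{m}\Vert \nab^{(i)}\RRRic\Vert_{\LL^2}$ plus the induction-hypothesis bound. \emph{Step 2: Intrinsic boundary equation.} Applying the interior fractional elliptic estimate (Theorem \ref{fracellestlit}) to $\half \Ld_{\gd} \gd_{AB} = -\Qdd_{AB} - K\gd_{AB}$ and using Lemma \ref{LemmaQConvergenceEstimate} to bound $\Qdd_{AB}$ and Lemma \ref{lemmaMoserEstimates} to bound $K\gd_{AB}$, we obtain $\gd \in \HH^{m+3/2}(\ubp(x,r_3))$.

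\emph{Step 3: Dirichlet problem for $g_{AB}$.} The equation $\half \triangle_g g_{AB} = -Q_{AB} - \RRRic_{AB}$ on $B^+(x,r_3)$ with Dirichlet data $\gd_{AB}$ from Step 2 is handled by the global elliptic estimate (Corollary \ref{cor:p33est24231} or its higher-regularity analogue); combined with the product estimate for $Q_{AB}$ in Lemma \ref{Qcalcul}, this yields $g_{AB} \in \HH^{m+2}(B^+(x,r_2))$. \emph{Step 4: Neumann problem for $g^{33}, g^{3A}$.} The equations $\half \triangle_g g^{3j} = Q^{3j} + \RRRic^{3j}$ are supplemented by the Neumann data in Lemma \ref{NeumannHarmonic}. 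Using Step 3 to control $\tr\Th$ in $\HH^{m+1/2}(\ubp)$ (via $\pr g_{AB}|_{\ubp}$) and the induction hypothesis to handle the lower-order term $(g^{33})^{-1/2} g^{Am}\pr_m g^{33}$, the elliptic Neumann estimate (Corollary \ref{fullldg6858}) gives $g^{33}, g^{3A} \in \HH^{m+2}(B^+(x,r_1))$. Finally, Lemma \ref{upstairscontrol} together with Lemma \ref{lemmaMoserEstimates} upgrades these to a bound on all components $g_{ij}$ in $\HH^{m+2}(B^+(x,r'))$.

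The main obstacle is the interplay at \emph{Step 1}: because $K$ is a quadratic combination of $\Th$ and a trace of interior curvature, one must carefully apply both the trace inequality and the fractional product estimates of Lemma \ref{lemmaMoserEstimates} to remain in the correct Sobolev scale at each induction level. A secondary bookkeeping issue is ensuring that all the intermediate, purely lower-order terms (e.g.\ $\Vert g\Vert_{\HH^{m+1}}$ from the induction hypothesis, constants depending on $\gd$ and $g$) are absorbed into $C_{r',r,m}\Vert g\Vert_{\HH^2(B^+(x,r))}$, which is possible because the induction hypothesis controls $\Vert g\Vert_{\HH^{m+1}}$ by precisely the quantities on the right-hand side of the claimed estimate.
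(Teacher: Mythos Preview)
Your proposal is correct and follows essentially the same inductive bootstrap as the paper: boundary control of $K$ via the contracted Gauss equation, interior elliptic estimates for $\gd_{AB}$ on $\ubp$, the Dirichlet problem for $g_{AB}$, the Neumann problem for $g^{33},g^{3A}$, and finally Lemma \ref{upstairscontrol}. The only slip is in Step 4, where the Neumann estimate you need is Theorem \ref{thm:grisvard1low}, not Corollary \ref{fullldg6858} (which is a two-dimensional interior Dirichlet estimate); also note that the paper organises the absorption of top-order terms via a single cut-off $\chi$ and the smallness of $\Vert g-e\Vert_{\HH^2}$ rather than a chain of shrinking radii, but this is purely cosmetic.
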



Before proving Proposition \ref{thm:main1Higherreg}, we first prove the higher regularity estimates of Theorem \ref{thm:higherintroexharm}.
\begin{proof}[Proof of part (2) of Theorem \ref{thm:higherintroexharm}]
First, by Theorem \ref{thm:main1}, there is a real 
$$r_0=r_0(\Vert \RRRic \Vert_{L^2(M)},\Vert \Th \Vert_{ L^4(\pr M)},r_{vol}(M,1),  \varep/2)>0$$
such that for each $p\in M$ there exists a boundary harmonic chart $\varphi: B^+(x,r_0) \to U \subset M$ with $\varphi(x)=p$ such that on $B^+(x,r_0)$,
\begin{align*} 
(1-\varep/2)e_{ij} \leq g_{ij} \leq (1+ \varep/2) e_{ij}
\end{align*}
and
\begin{align*} \begin{aligned}
r_0^{-1/2} \Vert \pr g \Vert_{\LL^2(B^+(x,r_0))} + r_0^{1/2} \Vert \pr^2 g \Vert_{\LL^2(B^+(x,r_0))} \Big) \leq \varep/2.
 \end{aligned} 
  \end{align*}
Therefore, by restriction, it holds in particular on $B^+(x,r_0/2)$
 \begin{align} \label{eq158283}
(1-\varep)e_{ij} \leq g_{ij} \leq (1+ \varep) e_{ij},
\end{align}
and
\begin{align} \begin{aligned}
\left( \frac{r_0}{2}\right)^{-1/2} \Vert \pr g \Vert_{\LL^2(B^+(x,r_0/2))} +\left( \frac{r_0}{2} \right)^{1/2} \Vert \pr^2 g \Vert_{\LL^2(B^+(x,r_0/2))} &\leq \varep.
\end{aligned} \label{eqhigherreggoal2222} \end{align}

Second, by Proposition \ref{thm:main1Higherreg}, 
\begin{align}  \begin{aligned}
\Vert g \Vert_{\HH^{m+2}(B^+(x,r_0/2))} \leq& C_{r_0} \sum\limits_{i=0}^m \Vert \nab^{(i)} \RRRic \Vert_{\LL^2(B^+(x,r_0))} +C_{r_0,m} \Vert g \Vert_{\HH^2(B^+(x,r_0))} \\ 
\leq& C_{r_0} \sum\limits_{i=0}^m \Vert \nab^{(i)} \RRRic \Vert_{\LL^2(M)} + C_{r_0,m} \varep,
\end{aligned} \label{higherregest122222} 
\end{align}

Setting $r:=\frac{r_0}{2}$, it follows that \eqref{eq158283}, \eqref{eqhigherreggoal2222} and \eqref{higherregest122222} prove part (2) of Theorem \ref{thm:higherintroexharm}.
\end{proof}


We turn now to the proof of Proposition \ref{thm:main1Higherreg}. The higher regularity estimates are derived by applying standard elliptic estimates to the equations (see Lemma \ref{lemmaharmexpr})
\begin{align}
&&\half \Ld_\gd \gd_{AB} + \Qdd_{AB}  &= -2K &\text{ on } \ubp(x,r),&& \label{ellgd} \\
&&\half \triangle g_{AB} + Q_{AB} &= -\RRRic_{AB} &\text{ on } B^+(x,r), && \label{ellg}\\
&&\half \triangle g^{3j} + Q^{3j} &= \RRRic^{3j} &\text{ on } B^+(x,r).&& \label{ellg3}
\end{align}

For completeness, we outline the proof of Proposition \ref{thm:main1Higherreg} in the rest of this section. The proof of Proposition \ref{thm:main1Higherreg} is based on induction.
\begin{itemize}
\item \emph{The induction basis.} The case $m=0$ is direct.
\item \emph{The induction step.} Let $m\geq1$. It suffices to prove that if $\varep>0$ is sufficiently small, then for $0<r'<r$, 
\begin{align} \begin{aligned}
\Vert \pr^{m+2} g \Vert_{L^2(B^+(x,r'))} \leq& C_{r',r} \Big( \Vert \nab^{(m)} \RRRic \Vert_{L^2(B^+(x,r))} +\Vert g \Vert_{\HH^{m+1}(B^+(x,r))} \Big)\\
& + C_{r',r,m} \Vert g-e \Vert_{\HH^2(B^+(x,r))}.
\end{aligned} \label{ineqtoprove4343} \end{align}
\end{itemize} 

In the following, we prove \eqref{ineqtoprove4343} by using standard elliptic estimates. Let $\chi := \chi_{x,r',r}$ be the smooth cut-off function defined in \eqref{defchi}. For ease of notation, the constant in $\lesssim$ is here allowed to depend on $r'$ and $r$, and we tactily use Lemma \ref{sobolevtracedomain} and standard product estimates as in Lemma \ref{lemmaMoserEstimates}. We assume throughout the proof that $\varep>0$ is sufficiently small. \\


\noindent {\bf Control of $\gd_{AB}$.} By Theorem \ref{fullldg6858} applied to \eqref{ellgd}, we have 
\begin{align} \begin{aligned}
&\Vert \prd^{m-1} \gd_{AB} \Vert_{H^{5/2}(\ubp(x,r'))} \\
 \leq& \Vert \prd^{m-1} (\chi \gd_{AB}) \Vert_{H^{5/2}(\ubp(x,r))} \\
\lesssim& \Vert \Ld_\gd (\prd^{m-1} (\chi \gd_{AB}) ) \Vert_{H^{1/2}(\ubp(x,r))} \\
\lesssim& \Vert \chi \prd^{m-1} (\Ld_\gd \gd_{AB}) \Vert_{H^{1/2}(\ubp(x,r))} + \Vert \gd-e \Vert_{\HH^{3/2}(\ubp(x,r))} \Vert \prd^{m-1}( \chi \gd) \Vert_{\HH^{3/2}(\ubp)} \\
&+ \Vert \gd \Vert_{\HH^{m+1/2}(\ubp(x,r))} + C_m \Vert \gd-e \Vert_{\HH^{3/2}(\ubp(x,r))},
\end{aligned} \label{higherreg232388first} \end{align}
where we estimated
\begin{align*}
\Vert [\Ld_\gd, \prd^{m-1}] (\chi \gd_{AB}) \Vert_{H^{1/2}(\ubp(x,r))} =&\Vert [\gd^{CD} \prd_C \prd_D, \prd^{m-1}] (\chi \gd_{AB}) \Vert_{H^{1/2}(\ubp(x,r))} \\
\lesssim& \Vert \gd-e \Vert_{\HH^{3/2}(\ubp(x,r))} \Vert \prd^{m-1}(\chi \gd) \Vert_{\HH^{3/2}(\ubp(x,r))} \\
&+ \Vert \gd \Vert_{\HH^{m+1/2}(\ubp(x,r))} + C_m \Vert \gd-e \Vert_{\HH^{3/2}(\ubp(x,r))}.
\end{align*}

\ni The first term on the right-hand side of \eqref{higherreg232388first} is estimated by using \eqref{ellgd} and the twice traced Gauss equation $2K = \mathrm{R}_{scal} - 2 \RRRic(N,N) + (\tr \Th)^2 - \vert \Th \vert^2$,
\begin{align} \begin{aligned}
&\Vert \chi \prd^{m-1} ( \Ld_\gd \gd_{AB} ) \Vert_{H^{1/2}(\ubp(x,r))} \\
\leq& \Vert \chi \prd^{m-1} \Qdd_{AB} \Vert_{H^{1/2}(\ubp(x,r))} + \Vert \chi \prd^{m-1} (\Rscal - 2 \RRRic_{NN}) \Vert_{H^{1/2}(\ubp(x,r))} \\
&+ \Vert \chi \prd^{m-1} \left((\tr \Th)^2 - \vert \Th \vert^2 \right) \Vert_{H^{1/2}(\ubp(x,r))} \\
\lesssim& \Vert \nab^{(m)} \RRRic \Vert_{\LL^2(B^+(x,r))} +  \Vert g-e \Vert_{\HH^2(B^+(x,r))} \Vert \pr^{m+2} (\chi g) \Vert_{\LL^2(B^+(x,r))} \\
&+ \Vert g \Vert_{\HH^{m+1}(B^+(x,r))} + C_{m} \Vert g-e \Vert_{\HH^2(B^+(x,r))}.
\end{aligned} \label{EQrrr1} \end{align}

\ni Plugging \eqref{EQrrr1} into \eqref{higherreg232388first} yields
\begin{align} \begin{aligned}
& \Vert \prd^{m-1} (\chi \gd_{AB}) \Vert_{\HH^{5/2}(\ubp(x,r))} \\
\lesssim& \Vert \nab^{(m)} \RRRic \Vert_{\LL^2(B^+(x,r))} +\Vert g-e \Vert_{\HH^2(B^+(x,r))} \Vert \pr^{m+2}(\chi g) \Vert_{\LL^2(B^+(x,r))} \\
&+ \Vert g \Vert_{\HH^{m+1}(B^+(x,r))} + C_m \Vert g-e \Vert_{\HH^2(B^+(x,r))}.
\end{aligned} \label{gdhigherreg2483534} \end{align}


\ni {\bf Control of $g_{AB}$.} By a standard application of Theorem \ref{cor:p33est24231} applied to \eqref{ellg}, we have
\begin{align} \begin{aligned}
&\Vert \pr^{m} g_{AB} \Vert_{H^2(B^+(x,r'))} \\
\leq &\Vert \pr^{m}(\chi g_{AB}) \Vert_{H^2(B^+(x,r))} \\
\lesssim& \Vert \chi \pr^m(\triangle_g g_{AB}) \Vert_{L^2(B^+(x,r))} + \Vert \prd^{m}(\chi g_{AB}) \Vert_{H^{3/2}(\ubp(x,r))} \\
&+ \Vert g-e \Vert_{\HH^2(B^+(x,r))} \Vert \pr^{m+2}(\chi g) \Vert_{\LL^2(B^+(x,r))} \\
&+ \Vert g \Vert_{\HH^{m+1}(B^+(x,r))} + C_m \Vert g-e \Vert_{\HH^2(B^+(x,r))},
\end{aligned} \label{EQrrr3} \end{align}
where we estimated
\begin{align*}
\Vert [\triangle_g, \pr^{m}] (\chi g_{AB}) \Vert_{H^{3/2}(\ubp(x,r))} \lesssim& \Vert g-e \Vert_{\HH^2(B^+(x,r))} \Vert \pr^{m+2}(\chi g) \Vert_{\LL^2(B^+(x,r))} \\
&+ \Vert g \Vert_{\HH^{m+1}(B^+(x,r))} + C_m \Vert g-e \Vert_{\HH^2(B^+(x,r))}.
\end{align*}

\ni The first term on the right-hand side of \eqref{EQrrr3} is estimated as above by using \eqref{ellg},
\begin{align} \begin{aligned}
&\Vert \chi \pr^m(\triangle_g g_{AB}) \Vert_{L^2(B^+(x,r))} \\
\lesssim& \Vert \chi \pr^m \RRRic \Vert_{\LL^2(B^+(x,r))} + \Vert \chi \pr^m Q_{AB} \Vert_{L^2(B^+(x,r))} \\
\lesssim& \Vert \nab^{(m)} \RRRic \Vert_{\LL^2(B^+(x,r))} + \Vert g-e \Vert_{\HH^2(B^+(x,r))} \Vert \pr^{m+2} (\chi g ) \Vert_{L^2(B^+(x,r))} \\
&+ \Vert g \Vert_{\HH^{m+1}(B^+(x,r))} + C_m \Vert g-e \Vert_{\HH^2(B^+(x,r))}.
\end{aligned} \label{EQrrr4} \end{align}

\ni The second term on the right-hand side of \eqref{EQrrr3} is estimated by using that $g_{AB} = \gd_{AB}$ on $\ubp(x,r)$ and \eqref{gdhigherreg2483534}, yielding
\begin{align} \begin{aligned}
&\Vert \prd^{m}(\chi g_{AB}) \Vert_{H^{3/2}(\ubp(x,r))} \\
=& \Vert \prd^{m}(\chi \gd_{AB}) \Vert_{H^{3/2}(\ubp(x,r))} \\
\lesssim& \Vert \nab^{(m)} \RRRic \Vert_{\LL^2(B^+(x,r))} +\Vert g-e \Vert_{\HH^2(B^+(x,r))} \Vert \pr^{m+2}(\chi g) \Vert_{\LL^2(B^+(x,r))} \\
&+ \Vert g \Vert_{\HH^{m+1}(B^+(x,r))} + C_m \Vert g-e \Vert_{\HH^2(B^+(x,r))}.
\end{aligned} \label{EQrrr5} \end{align}

\ni Plugging \eqref{EQrrr4} and \eqref{EQrrr5} into \eqref{EQrrr3} yields
\begin{align} \begin{aligned}
&\Vert \pr^{m}(\chi g_{AB}) \Vert_{H^2(B^+(x,r))} \\
 \lesssim& \Vert \nab^{(m)} \RRRic \Vert_{\LL^2(B^+(x,r))} +\Vert g-e \Vert_{\HH^2(B^+(x,r))} \Vert \pr^{m+2}(\chi g) \Vert_{\LL^2(B^+(x,r))} \\
&+ \Vert g \Vert_{\HH^{m+1}(B^+(x,r))} + C_m \Vert g-e \Vert_{\HH^2(B^+(x,r))}.
\end{aligned} \label{frist8349215version1} \end{align}

\ni In particular, \eqref{frist8349215version1} implies by $\Theta_{AB} = \frac{1}{2a} \pr_3 (g_{AB}) - \frac{1}{2a} (\Lied_\be \gd)_{AB}$, see \eqref{secondformexpr3}, that
\begin{align} \begin{aligned}
&\Vert \chi \prd^{m} \tr \Theta \Vert_{H^{1/2}(\ubp(x,r))} \\
\lesssim& \Vert \nab^{(m)} \RRRic \Vert_{\LL^2(B^+(x,r))} +\Vert g-e \Vert_{\HH^2(B^+(x,r))} \Vert \pr^{m+2}(\chi g) \Vert_{\LL^2(B^+(x,r))} \\
&+ \Vert g \Vert_{\HH^{m+1}(B^+(x,r))} + C_m \Vert g-e \Vert_{\HH^2(B^+(x,r))}.
\end{aligned} \label{trthest2353} \end{align}


\ni {\bf Control of $g^{33}$.} By a standard application of Theorem \ref{thm:grisvard1low} to \eqref{ellg3}, we have
\begin{align} \begin{aligned}
&\Vert \pr^{m} g^{33} \Vert_{H^2(B^+(x,r'))} \\
\leq &\Vert \pr^{m}( \chi g^{33}) \Vert_{H^2(B^+(x,r))} \\
\lesssim& \Vert \chi \pr^m (\triangle_g g^{33}) \Vert_{L^2(B^+(x,r))} + \Vert \chi \prd^{m} N(g^{33}) \Vert_{H^{1/2}(\ubp(x,r))} \\
&+ \Vert g-e \Vert_{\HH^2(B^+(x,r))} \Vert \pr^{m+2}(\chi g) \Vert_{\LL^2(B^+(x,r))} \\
&+ \Vert g \Vert_{\HH^{m+1}(B^+(x,r))} + C_m \Vert g-e \Vert_{\HH^2(B^+(x,r))}.
\end{aligned}\label{estimateg332323} \end{align}

\ni The first term on the right-hand side of \eqref{estimateg332323} is estimated by using \eqref{ellg3},
\begin{align} \begin{aligned}
&\Vert \chi \pr^m (\triangle_g g^{33}) \Vert_{L^2(B^+(x,r))} \\
\lesssim& \Vert \nab^{(m)} \RRRic \Vert_{\LL^2(B^+(x,r))} +\Vert g-e \Vert_{\HH^2(B^+(x,r))} \Vert \pr^{m+2}(\chi g) \Vert_{\LL^2(B^+(x,r))} \\
&+ \Vert g \Vert_{\HH^{m+1}(B^+(x,r))} + C_m \Vert g-e \Vert_{\HH^2(B^+(x,r))}.
\end{aligned} \label{EQrrr6} \end{align}

\ni The second term on the right-hand side of \eqref{estimateg332323} is estimated by using that $$N(g^{33}) = 2 g^{33} \tr \Theta$$ in boundary harmonic coordinates, see Lemma \ref{NeumannHarmonic}, together with the previous estimate \eqref{trthest2353},
\begin{align} \begin{aligned}
&\Vert \chi \prd^{m} N(g^{33}) \Vert_{H^{1/2}(\ubp(x,r))} \\
=& \Vert \chi \prd^{m} (2g^{33} \tr \Theta) \Vert_{H^{1/2}(\ubp(x,r))} \\
 \lesssim&  \Vert \nab^{(m)} \RRRic \Vert_{\LL^2(B^+(x,r))} +\Vert g-e \Vert_{\HH^2(B^+(x,r))} \Vert \pr^{m+2}(\chi g) \Vert_{\LL^2(B^+(x,r))} \\
&+ \Vert g \Vert_{\HH^{m+1}(B^+(x,r))} + C_m \Vert g-e \Vert_{\HH^2(B^+(x,r))}.
\end{aligned} \label{EQrrr7} \end{align}

\ni Plugging \eqref{EQrrr6} and \eqref{EQrrr7} into \eqref{estimateg332323} yields
\begin{align} \begin{aligned} 
\Vert \pr^{m-1}(\chi g^{33}) \Vert_{W^{2,3}(B^+(x,r))} \lesssim& \Vert g-e \Vert_{\HH^2(B^+(x,r))} \Vert \pr^{m+2}(\chi g) \Vert_{\LL^2(B^+(x,r))} \\
&+ \Vert \chi \pr^{m} \RRRic \Vert_{\LL^2(B^+(x,r))} + \Vert g \Vert_{\HH^{m+1}(B^+(x,r))} \\
&+ C_m \Vert g-e \Vert_{\HH^2(B^+(x,r))}.
\end{aligned} \label{g33est35423} \end{align}

\ni {\bf Control of $g^{3A}$.} By a standard application of Theorem \ref{thm:grisvard1low} to \eqref{ellg3}, we have
\begin{align} \begin{aligned}
&\Vert \pr^{m} g^{3A} \Vert_{H^2(B^+(x,r'))} \\
\leq &\Vert \pr^{m}( \chi g^{3A}) \Vert_{H^2(B^+(x,r))} \\
\lesssim& \Vert \chi \pr^m (\triangle_g g^{3A}) \Vert_{L^2(B^+(x,r))} + \Vert \chi \prd^{m} N(g^{3A}) \Vert_{H^{1/2}(\ubp(x,r))} \\
&+ \Vert g-e \Vert_{\HH^2(B^+(x,r))} \Vert \pr^{m+2}(\chi g) \Vert_{\LL^2(B^+(x,r))} \\
&+ \Vert g \Vert_{\HH^{m+1}(B^+(x,r))} + C_m \Vert g-e \Vert_{\HH^2(B^+(x,r))}.
\end{aligned}\label{estimateg332323111} \end{align}

\ni The first term on the right-hand side of \eqref{estimateg332323111} is estimated by using \eqref{ellg3},
\begin{align} \begin{aligned}
&\Vert \chi \pr^m (\triangle_g g^{3A}) \Vert_{L^2(B^+(x,r))} \\
\lesssim& \Vert \nab^{(m)} \RRRic \Vert_{\LL^2(B^+(x,r))} +\Vert g-e \Vert_{\HH^2(B^+(x,r))} \Vert \pr^{m+2}(\chi g) \Vert_{\LL^2(B^+(x,r))} \\
&+ \Vert g \Vert_{\HH^{m+1}(B^+(x,r))} + C_m \Vert g-e \Vert_{\HH^2(B^+(x,r))}.
\end{aligned} \label{EQrrr6111} \end{align}

\ni The second term on the right-hand side of \eqref{estimateg332323111} is estimated by using that 
\begin{align*}
N(g^{3A}) = g^{3A} \tr \Theta - \half \frac{1}{\sqrt{g^{33}}} g^{3i} \pr_i g^{33},
\end{align*}
in boundary harmonic coordinates, see Lemma \ref{NeumannHarmonic}, together with \eqref{trthest2353} and \eqref{g33est35423},
\begin{align} \begin{aligned}
&\Vert \chi \prd^{m} N(g^{33}) \Vert_{H^{1/2}(\ubp(x,r))} \\
=& \Vert \chi \prd^{m} (2g^{33} \tr \Theta) \Vert_{H^{1/2}(\ubp(x,r))} \\
 \lesssim&  \Vert \nab^{(m)} \RRRic \Vert_{\LL^2(B^+(x,r))} +\Vert g-e \Vert_{\HH^2(B^+(x,r))} \Vert \pr^{m+2}(\chi g) \Vert_{\LL^2(B^+(x,r))} \\
&+ \Vert g \Vert_{\HH^{m+1}(B^+(x,r))} + C_m \Vert g-e \Vert_{\HH^2(B^+(x,r))}.
\end{aligned} \label{EQrrr7111} \end{align}

\ni Plugging \eqref{EQrrr6111} and \eqref{EQrrr7111} into \eqref{estimateg332323111} yields
\begin{align} \begin{aligned} 
&\Vert \pr^{m}(\chi g^{3A}) \Vert_{H^2(B^+(x,r))} \\
 \lesssim&  \Vert \nab^{(m)} \RRRic \Vert_{\LL^2(B^+(x,r))} +\Vert g-e \Vert_{\HH^2(B^+(x,r))} \Vert \pr^{m+2}(\chi g) \Vert_{\LL^2(B^+(x,r))} \\
&+ \Vert g \Vert_{\HH^{m+1}(B^+(x,r))} + C_m \Vert g-e \Vert_{\HH^2(B^+(x,r))}.
\end{aligned} \label{g33est35423111} \end{align}

\ni {\bf Conclusion of the proof of \eqref{ineqtoprove4343}.} By Lemma \ref{upstairscontrol}, the estimates \eqref{gdhigherreg2483534}, \eqref{frist8349215version1}, \eqref{g33est35423} and \eqref{g33est35423111} imply that
\begin{align*}
&\Vert \pr^{m}( \chi g) \Vert_{\HH^2(B^+(x,r))} \\
\lesssim& \Vert \nab^{(m)} \RRRic \Vert_{\LL^2(B^+(x,r))} +\Vert g-e \Vert_{\HH^2(B^+(x,r))} \Vert \pr^{m+2}(\chi g) \Vert_{\LL^2(B^+(x,r))} \\
&+ \Vert g \Vert_{\HH^{m+1}(B^+(x,r))} + C_m \Vert g-e \Vert_{\HH^2(B^+(x,r))} \\
\lesssim& \Vert \nab^{(m)} \RRRic \Vert_{\LL^2(B^+(x,r))} + \Vert g \Vert_{\HH^{m+1}(B^+(x,r))} + C_m \Vert g-e \Vert_{\HH^2(B^+(x,r))} ,
 \end{align*}
where we used that $\varep>0$ is sufficiently small to absorb the second term in the left-hand side. This finishes the proof of \eqref{ineqtoprove4343} and concludes the induction step. This finishes the proof of Proposition \ref{thm:main1Higherreg}.

\appendix

\section{Global elliptic estimates} In this section, we collect global elliptic estimates for Dirichlet and Neumann problems, see Sections \ref{secDirEll} and \ref{secNeuEll}, respectively. These estimates are applied in Section \ref{sec:setup} to construct boundary harmonic coordinates, and in Section \ref{sec:harmregularitymflds} to derive higher regularity estimates for given boundary harmonic coordinates. \\

The estimates are standard, see for example \cite{GilbargTrudinger} or \cite{Petersen}, but we give some proofs for completeness. In the following, the constant in $\lesssim$ is allowed to depend on the domain $\Om$.


\subsection{Global estimates for Dirichlet data} \label{secDirEll}

We have the following standard elliptic estimates, see for example Theorem 9.13 from \cite{GilbargTrudinger} and Lemma \ref{sobolevdomainlemma}.
\begin{theorem}[Standard global elliptic estimate] \label{cor:p33est24231}
Let $n \in \{ 2,3 \} $ and let $\Om \subset  \RRR^n$ be a bounded smooth domain. For a real $n<p<\infty$, let $g \in \WW^{1,p}(\Om)$ be a Riemannian metric such that $\triangle_g = g^{ij}\pr_i \pr_j$. Then, for all $1<p'<\infty$ and $u \in W^{2,p'}(\Om)$, we have
\begin{align*}
\Vert u \Vert_{W^{2,p'}(\Om)} \lesssim \Vert \triangle_g u \Vert_{L^{p'}(\Om)} + \Vert u \Vert_{W^{2-1/p',p'}(\pr \Om)}.
\end{align*}
\end{theorem}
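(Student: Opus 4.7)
The plan is to reduce the statement with inhomogeneous boundary data to the homogeneous Dirichlet problem and then invoke the classical $L^{p'}$-theory for second-order elliptic operators with continuous coefficients, as formulated in Theorem 9.13 of \cite{GilbargTrudinger}.

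First, I would check that the coefficients of $\triangle_g$ are continuous. Since $g \in \WW^{1,p}(\Om)$ with $p>n$, Morrey's embedding yields $g_{ij} \in C^{0,1-n/p}(\ol{\Om})$; because $g$ is a Riemannian metric (uniformly positive-definite on the bounded domain $\ol{\Om}$), the inverse components $g^{ij}$ also lie in $C^{0,1-n/p}(\ol{\Om})$. In particular $g^{ij} \in L^{\infty}(\Om)$ and $\triangle_g = g^{ij}\pr_i\pr_j$ has continuous coefficients, which is the hypothesis needed to apply the $L^{p'}$-theory.

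Second, I would lift the boundary data. Using the standard fact that, for a smooth domain, the trace operator $\tau:W^{2,p'}(\Om)\to W^{2-1/p',p'}(\pr\Om)$ of Lemma \ref{sobolevtracedomain} admits a bounded right inverse, I select $\tilde v \in W^{2,p'}(\Om)$ satisfying $\tilde v\vert_{\pr\Om}=u\vert_{\pr\Om}$ and $\Vert \tilde v\Vert_{W^{2,p'}(\Om)}\lesssim \Vert u\Vert_{W^{2-1/p',p'}(\pr\Om)}$, and set $w:=u-\tilde v \in W^{2,p'}(\Om)\cap W^{1,p'}_0(\Om)$. Then $\triangle_g w = \triangle_g u - \triangle_g \tilde v$ with
$$\Vert \triangle_g \tilde v\Vert_{L^{p'}(\Om)} \lesssim \Vert g^{-1}\Vert_{L^\infty(\Om)}\Vert \tilde v\Vert_{W^{2,p'}(\Om)}\lesssim \Vert u\Vert_{W^{2-1/p',p'}(\pr\Om)}.$$

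Third, I would apply Theorem 9.13 of \cite{GilbargTrudinger} to the homogeneous Dirichlet problem for $w$ to obtain
$$\Vert w\Vert_{W^{2,p'}(\Om)}\lesssim \Vert \triangle_g w\Vert_{L^{p'}(\Om)}+\Vert w\Vert_{L^{p'}(\Om)}.$$
Since $\triangle_g$ has no zeroth-order term, the weak maximum principle yields uniqueness in $W^{2,p'}(\Om)\cap W^{1,p'}_0(\Om)$, and a standard compactness/Fredholm argument (cf.\ Theorem 9.15 of \cite{GilbargTrudinger}) absorbs the lower-order $L^{p'}$-term on the right. Combining the resulting homogeneous estimate with the decomposition $u=w+\tilde v$ and the bound on $\tilde v$ gives the claim. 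The main obstacle is Step 3, which packages the nontrivial Calder\'on-Zygmund freezing-of-coefficients argument underlying the $L^{p'}$-regularity theory for operators with merely continuous (non-smooth) coefficients; being classical, I would cite it rather than reproduce it.
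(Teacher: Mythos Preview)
Your proposal is correct and matches the paper's approach: the paper does not give a proof of this theorem but simply cites Theorem 9.13 in \cite{GilbargTrudinger} together with the Sobolev embedding Lemma \ref{sobolevdomainlemma} (the latter being exactly your Morrey step $g\in\WW^{1,p}\hookrightarrow \CC^{0,\alpha}$ for $p>n$). Your reduction to homogeneous boundary data via a trace extension and subsequent absorption of the lower-order $L^{p'}$ term by uniqueness/Fredholm is the standard way to unpack that citation, and is consistent with how the paper proceeds elsewhere (cf.\ the proof of Corollary \ref{prop:ellest3}).
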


From Theorem \ref{cor:p33est24231} we get the following standard higher regularity estimate, see for example Theorem 8.13 in \cite{GilbargTrudinger}.
\begin{corollary}[Standard global higher regularity elliptic estimate] \label{second2241534}
Let $\Om \subset  \RRR^3$ be a bounded smooth domain. Let the Riemannian metric $g \in \HH^2(\Om)$ be such that $\triangle_g = g^{ij} \pr_i \pr_j$. Then for all $u \in H^{3}(\Om)$,
\begin{align*} 
\Vert u \Vert_{H^3(\Om)} 
\lesssim \Vert \triangle_g u \Vert_{H^1(\Om)} + \Vert u \Vert_{H^{5/2}(\pr \Om)}.
 \end{align*}
\end{corollary}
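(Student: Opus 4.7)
The plan is to bootstrap from Theorem \ref{cor:p33est24231}. Applying $\pr_k$ to the PDE $\triangle_g u = f$ produces, for each $k = 1,2,3$, the derived equation
\begin{equation*}
\triangle_g (\pr_k u) = \pr_k f - (\pr_k g^{ij})\pr_i \pr_j u.
\end{equation*}
Since $u \in H^3(\Om)$ gives $\pr_k u \in H^2(\Om)$, the intention is to apply Theorem \ref{cor:p33est24231} with $p'=2$ to this equation, direction by direction, and then assemble the pieces.

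First I would localize via a partition of unity and straighten the boundary near each boundary point so that $\pr\Om$ becomes locally $\{x^3=0\}$. For tangential directions $k \in \{1,2\}$, Theorem \ref{cor:p33est24231} yields
\begin{equation*}
\Vert \pr_k u\Vert_{H^2} \lesssim \Vert \triangle_g (\pr_k u)\Vert_{L^2} + \Vert \pr_k u\Vert_{H^{3/2}(\pr\Om)}.
\end{equation*}
The forcing is bounded in $L^2$ by writing $\Vert \pr_k f\Vert_{L^2} \le \Vert f\Vert_{H^1}$ and by bounding the commutator term $(\pr_k g)(\pr^2 u)$ via the Sobolev embeddings of Lemma \ref{sobolevdomainlemma}: $\HH^2(\Om) \hookrightarrow \WW^{1,4}(\Om)$ together with $H^3(\Om) \hookrightarrow W^{2,4}(\Om)$ in three dimensions gives a Hölder bound of the form $\Vert g\Vert_{\HH^2}\Vert u\Vert_{H^3}$. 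The boundary trace $\pr_k u|_{\pr\Om}$ is the tangential derivative of $u|_{\pr\Om}\in H^{5/2}(\pr\Om)$ and therefore lies in $H^{3/2}(\pr\Om)$ with the desired bound.

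To close the estimate I would absorb the $\Vert u\Vert_{H^3}$ contribution coming from the commutator term using a Gagliardo--Nirenberg interpolation of the form $\Vert \pr^2 u\Vert_{L^4}\lesssim \Vert u\Vert_{H^3}^{3/4}\Vert u\Vert_{H^2}^{1/4}$, followed by Young's inequality; the resulting $\Vert u\Vert_{H^2}$ on the right is then controlled by a preliminary application of Theorem \ref{cor:p33est24231} to $u$ itself. This takes care of every third derivative of $u$ that contains at least one tangential factor. For the purely normal third derivative $\pr_3^3 u$, I would not differentiate tangentially but instead exploit the PDE: since $g^{33}$ is uniformly bounded below by ellipticity, the identity $g^{33}\pr_3^2 u = f - \sum_{(i,j)\ne(3,3)} g^{ij}\pr_i\pr_j u$ can be differentiated in $x^3$ to express $\pr_3^3 u$ in terms of $\pr_3 f$, of products of $\pr g$ with $\pr^2 u$, and of mixed third derivatives of $u$ that are already controlled from the tangential step.

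The main obstacle is the borderline coefficient regularity in dimension three: $g \in \HH^2$ is critical, so the product $(\pr g)(\pr^2 u)$ cannot be estimated naively and the absorption/interpolation step above is essential. A secondary difficulty, already addressed via the PDE trick, is that the hypothesis only supplies Dirichlet data $u|_{\pr\Om}\in H^{5/2}(\pr\Om)$, so no direct trace control of normal derivatives of $u$ is available and the purely normal top derivative must be recovered from the equation itself.
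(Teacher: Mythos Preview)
Your proposal is correct and follows the standard differentiation/bootstrap argument that the paper implicitly invokes: the paper does not give its own proof of this corollary but simply records it as a consequence of Theorem~\ref{cor:p33est24231} together with a reference to Theorem~8.13 in \cite{GilbargTrudinger}. Your outline---tangential differentiation, applying Theorem~\ref{cor:p33est24231} to $\pr_k u$, recovering $\pr_3^3 u$ from the equation via ellipticity, and absorbing the critical commutator term $(\pr g)(\pr^2 u)$ by interpolation and Young---is exactly the standard route, and your handling of the borderline product (using $\HH^2\hookrightarrow\WW^{1,6}$ so that $\pr g\in L^4$, together with the Gagliardo--Nirenberg bound $\Vert\pr^2 u\Vert_{L^4}\lesssim\Vert u\Vert_{H^3}^{3/4}\Vert u\Vert_{H^2}^{1/4}$) is the correct way to close in the critical dimension.
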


We now turn to the Laplace-Beltrami operator of a general Riemannian metric in three dimensions.
\begin{proposition} \label{control9993434} Let $\Om \subset \RRR^3$ be bounded smooth domain and $g \in \WW^{1,6}(\Om)$ a Riemannian metric. 
Let $u$ be the solution to 
\begin{align*}
\triangle_g u &= f \text{ in }  \Om,\\
u &= 0\text{ on } \pr  \Om.
\end{align*}
for $f \in L^6$. Then $u \in W^{2,6}(\Om)$ and 
\begin{align} \label{eq857674} 
\Vert u \Vert_{W^{2,6}(\Om)}  \lesssim \Vert f \Vert_{L^6(\Om)}. 
\end{align}
\end{proposition}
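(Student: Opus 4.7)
The plan is to read off the estimate \eqref{eq857674} directly from Theorem \ref{cor:p33est24231}, once we know the solution has enough regularity. The key observation is that the hypotheses of Theorem \ref{cor:p33est24231} are met in the regime $n=3$, $p=6$, since $p > n$.

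Concretely, I would apply Theorem \ref{cor:p33est24231} to $u$ with $p' = 6$, which yields
\begin{align*}
\Vert u \Vert_{W^{2,6}(\Om)} \lesssim \Vert \triangle_g u \Vert_{L^6(\Om)} + \Vert u \Vert_{W^{11/6,6}(\pr \Om)}.
\end{align*}
The PDE replaces the first term by $\Vert f \Vert_{L^6(\Om)}$, while the homogeneous Dirichlet condition $u|_{\pr \Om} = 0$ annihilates the second term, producing \eqref{eq857674}.

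It remains to justify the a priori regularity $u \in W^{2,6}(\Om)$ needed to invoke Theorem \ref{cor:p33est24231}. Since the three-dimensional Sobolev embedding gives $W^{1,6}(\Om) \hookrightarrow C^{0,1/2}(\ol{\Om})$, the coefficients $g^{ij}$ are uniformly continuous on $\ol{\Om}$, so $\triangle_g$ is a uniformly elliptic operator in non-divergence form with continuous coefficients on a smooth bounded domain. Standard $L^p$-theory (e.g.\ Theorem 9.15 of Gilbarg-Trudinger) then produces a unique solution $u \in W^{2,6}(\Om) \cap W^{1,6}_0(\Om)$ to the Dirichlet problem for any $f \in L^6(\Om)$. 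Alternatively, one can approximate $g$ by mollifications $g_\epsilon$ converging to $g$ both in $\WW^{1,6}(\Om)$ and uniformly on $\ol{\Om}$, solve the smooth Dirichlet problem $\triangle_{g_\epsilon} u_\epsilon = f$, $u_\epsilon|_{\pr \Om} = 0$ classically, apply Theorem \ref{cor:p33est24231} to obtain a uniform $W^{2,6}$-bound $\Vert u_\epsilon \Vert_{W^{2,6}(\Om)} \lesssim \Vert f \Vert_{L^6(\Om)}$, and extract a weak $W^{2,6}$-limit $u_\epsilon \rightharpoonup u$, passing to the limit in the equation using $g_\epsilon \to g$ in $C^0$ and $\pr_i \pr_j u_\epsilon \rightharpoonup \pr_i \pr_j u$ in $L^6$.

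No substantial obstacle arises: the proposition is essentially a packaging of Theorem \ref{cor:p33est24231} with vanishing Dirichlet data, and the only point requiring any care is securing the a priori $W^{2,6}$-regularity of $u$, which is granted by the standard existence theory recalled above. Uniqueness of $u$, should it be needed, follows by applying the a priori estimate to the difference of two candidate solutions with zero right-hand side.
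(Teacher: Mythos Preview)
There is a genuine gap. In Proposition~\ref{control9993434} the operator $\triangle_g$ is the \emph{full} Laplace--Beltrami operator (this is why the paper can invoke Lax--Milgram, and it is how the proposition is used in Corollary~\ref{prop:ellest3}). By contrast, Theorem~\ref{cor:p33est24231} is stated under the hypothesis $\triangle_g = g^{ij}\pr_i\pr_j$, i.e.\ no first-order terms. Your argument silently identifies the two. When you write
\[
\Vert u \Vert_{W^{2,6}(\Om)} \lesssim \Vert \triangle_g u \Vert_{L^6(\Om)} + \Vert u \Vert_{W^{11/6,6}(\pr\Om)}
\]
and then substitute $\triangle_g u = f$, you have skipped over the term $\triangle_g u - g^{ij}\pr_i\pr_j u \approx \pr g\,\pr u$. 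Since $\pr g$ is only in $L^6$, bounding $\Vert \pr g\,\pr u\Vert_{L^6}$ requires $\pr u\in L^\infty$, which by Sobolev is exactly the $W^{2,6}$ control you are trying to prove. Neither your appeal to Gilbarg--Trudinger 9.15 (which needs $b^i\in L^\infty$, not $L^6$) nor your approximation argument escapes this circularity: for the mollified $g_\varepsilon$ you would still obtain
\[
\Vert u_\varepsilon\Vert_{W^{2,6}} \lesssim \Vert f\Vert_{L^6} + \Vert \pr g_\varepsilon\Vert_{L^6}\Vert \pr u_\varepsilon\Vert_{L^\infty},
\]
and $\Vert \pr g_\varepsilon\Vert_{L^6}$ is not small.

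The paper breaks the loop by a bootstrap in the integrability exponent. Starting from the Lax--Milgram bound $\Vert u\Vert_{H^1}\lesssim\Vert f\Vert_{L^6}$, one applies Theorem~\ref{cor:p33est24231} successively with $p'=3/2,\,2,\,3,\,24/5$: at each step the lower-order term $\Vert\pr g\,\pr u\Vert_{L^{p'}}$ is controlled by $\Vert\pr g\Vert_{L^6}$ times $\Vert\pr u\Vert_{L^q}$ for some $q$ already obtained, and Sobolev embedding feeds the new $W^{2,p'}$ bound into the next step. After finitely many iterations one reaches $\pr u\in C^0$, and then the $p'=6$ estimate closes. This bootstrap is the actual content of the proposition; without it the statement does not follow from Theorem~\ref{cor:p33est24231} by packaging alone.
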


\begin{proof} First, we note that by the Lax-Milgram theorem and Lemma \ref{sobolevdomainlemma}, there exists a solution $u \in H^1(\Om)$ with
\begin{align*}
\Vert u \Vert_{H^1(\Om)} \lesssim \Vert f \Vert_{H^{-1}(\Om)} \lesssim \Vert f \Vert_{L^6(\Om)},
\end{align*}
and consequently also 
\begin{align} \label{6934934}
\Vert u \Vert_{L^6(\Om)} \lesssim \Vert f \Vert_{L^6(\Om)}.
\end{align}

By Theorem \ref{cor:p33est24231},
\begin{align} \begin{aligned}
\Vert u \Vert_{W^{2,6}(\Om)} \lesssim& \Vert g^{ij} \pr_i \pr_j u \Vert_{L^{6}(\Om)} \\
\lesssim& \Vert f \Vert_{L^{6}(\Om)} + \Vert \triangle_g u-g^{ij} \pr_i \pr_j u \Vert_{L^{6}(\Om)} \\
\lesssim& \Vert f \Vert_{L^{6}(\Om)} + \Vert \pr g \pr u \Vert_{L^{6}(\Om)} \\ 
\lesssim& \Vert f \Vert_{L^{6}(\Om)} + \Vert g \Vert_{\WW^{1,6}(\Om)} \Vert \pr u \Vert_{C^0(\Om)} . 
\end{aligned} \label{qe5828435} \end{align}

In the following, we control the term $\Vert \pr u \Vert_{C^0(\Om)}$ on the right-hand side of \eqref{qe5828435} by a bootstrap argument\footnote{This argument is taken from \cite{Petersen}.}. Using that $g \in \WW^{1,6}(\Om) \subset \CC^0(\Om)$ in $n=3$, we have by partial integration on $\Om$, 
\begin{align} \label{eq93422}
\Vert \pr u \Vert_{L^2(\Om)} \lesssim \Vert \nab u \Vert_{L^2(\Om)} \lesssim \Vert u \Vert^{1/2}_{L^2(\Om)} \Vert \triangle_g u \Vert^{1/2}_{L^2(\Om)} \lesssim \Vert f \Vert_{L^6(\Om)}.
\end{align}
By using Theorem \ref{cor:p33est24231} with $p'=3/2$, this implies further
\begin{align} \begin{aligned}
\Vert u \Vert_{W^{2,3/2}(\Om)} \lesssim& \Vert g^{ij} \pr_i \pr_j u \Vert_{L^{3/2}(\Om)} \\
\lesssim & \Vert  \triangle_g u\Vert_{L^{3/2}(\Om)} + \Vert \pr g \pr u \Vert_{L^{3/2}(\Om)} \\
\lesssim & \Vert  f \Vert_{L^{3/2}(\Om)} + \Vert g \Vert_{\WW^{1,6}(\Om)} \Vert \pr u \Vert_{L^2(\Om)} \\
\lesssim & \Vert  f \Vert_{L^{3/2}(\Om)} + \Vert g \Vert_{\WW^{1,6}(\Om)} \Vert f \Vert_{L^{6}(\Om)} \\
\end{aligned} \label{eq5748293}\end{align}
By Lemma \ref{sobolevdomainlemma} and \eqref{eq5748293}, we have
\begin{align*}
\Vert \pr u \Vert_{L^3(\Om)} \lesssim& \Vert \pr u \Vert_{W^{1,3/2}(\Om)} \\
\lesssim& \Vert f \Vert_{3/2(\Om)} + \Vert g \Vert_{\WW^{1,6}(\Om)} \Vert f \Vert_{L^6(\Om)} \\
\lesssim& \Vert f \Vert_{L^{6}(\Om)}.
\end{align*}
Compared to \eqref{eq93422}, we bootstrapped the regularity of $\pr u$. 

By continuing to bootstrap the regularity of $\pr u$ with Theorem \ref{cor:p33est24231} and Lemma \ref{sobolevdomainlemma} as above (with $p'=2$, $p'=3$ and finally $p'=24/5$), we get
\begin{align*}
\Vert \pr u \Vert_{W^{1,24/5}(\Om) } \lesssim \Vert f \Vert_{L^{6}(\Om)},
\end{align*}  
and consequently, by Lemma \ref{sobolevdomainlemma} and \eqref{6934934}, 
\begin{align*}
\Vert \pr u \Vert_{C^0(\Om)} \lesssim& \Vert \pr u \Vert_{W^{1,24/5}(\Om) }, \\
 \lesssim& \Vert f \Vert_{L^{6}(\Om)}.
\end{align*}
Plugging this bound into \eqref{qe5828435} proves \eqref{eq857674}. This finishes the proof of Proposition \ref{control9993434}.
\end{proof}

\begin{corollary}[Global estimates for the Dirichlet problem in $H^3$, $n=3$] \label{prop:ellest3}
Let $\Om \subset \RRR^3$ be bounded smooth domain and $g \in \HH^2(\Om)$ a Riemannian metric. Let $u$ be the solution to 
\begin{align} \begin{aligned}
\triangle_g u &= f \text{ in }  \Om,\\
u &= h\text{ on } \pr  \Om.
\end{aligned} \label{tildepsidef1255443}\end{align}
for $f \in H^1(\Om)$ and $h \in H^{5/2}(\pr \Om)$. Then we have $u \in H^3(\Om)$ and 
\begin{align*}
\Vert u \Vert_{H^3(\Om)} \lesssim \Vert f \Vert_{H^1(\Om)} + \Vert h \Vert_{H^{5/2}(\pr \Om)}.
\end{align*}
\end{corollary}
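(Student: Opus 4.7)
The strategy is to combine an approximation argument with the a priori $H^3$-estimate of Corollary \ref{second2241534}. First I would extend the boundary data: by standard trace theory there is $\tilde h \in H^3(\Om)$ with $\tilde h\vert_{\pr \Om} = h$ and $\Vert \tilde h\Vert_{H^3(\Om)} \lesssim \Vert h\Vert_{H^{5/2}(\pr \Om)}$, so that $v := u - \tilde h$ satisfies the zero-Dirichlet problem $\triangle_g v = F := f - \triangle_g \tilde h$ in $\Om$. Using $g \in \HH^2 \hookrightarrow \LL^\infty \cap \WW^{1,6}$ and $\tilde h \in H^3 \hookrightarrow W^{2,6}$ in $n=3$ (Lemma \ref{sobolevdomainlemma}), one checks that $F \in L^6(\Om)$ with $\Vert F\Vert_{L^6} \lesssim \Vert f\Vert_{H^1} + \Vert h\Vert_{H^{5/2}(\pr \Om)}$. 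Lax--Milgram produces a weak $H^1$-solution $v$, and Proposition \ref{control9993434} then upgrades it to $v \in W^{2,6}(\Om) \hookrightarrow H^2(\Om)$.

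The main obstacle is that Corollary \ref{second2241534} is only an a priori estimate — it presupposes $u \in H^3(\Om)$ — so a separate argument is needed to show that the rough-coefficient solution actually lies in $H^3$. I would resolve this by approximation: pick smooth $g_\epsilon \to g$ in $\HH^2(\Om)$ and smooth $F_\epsilon \to F$ in $H^1(\Om)$, and for each $\epsilon$ solve the smooth Dirichlet problem $\triangle_{g_\epsilon} v_\epsilon = F_\epsilon$, $v_\epsilon\vert_{\pr \Om} = 0$, which admits a smooth solution by classical theory. Corollary \ref{second2241534} then yields
\begin{align*}
\Vert v_\epsilon \Vert_{H^3(\Om)} \lesssim \Vert F_\epsilon\Vert_{H^1(\Om)},
\end{align*}
with implicit constant depending on $g_\epsilon$ only through $\Vert g_\epsilon\Vert_{\HH^2}$, hence uniform in $\epsilon$.

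To close the argument, I would extract a weakly convergent subsequence $v_\epsilon \rightharpoonup v^*$ in $H^3(\Om)$; compact embedding gives strong convergence $v_\epsilon \to v^*$ in $H^2(\Om)$, while $g_\epsilon \to g$ in $\HH^2 \hookrightarrow \LL^\infty$ subsequentially, so one can pass to the limit in $\triangle_{g_\epsilon} v_\epsilon = F_\epsilon$ to conclude $\triangle_g v^* = F$ with $v^* = 0$ on $\pr \Om$. Uniqueness of weak $H^1$-solutions of the Dirichlet problem forces $v^* = v$, so $v \in H^3(\Om)$, and weak lower semi-continuity yields $\Vert v\Vert_{H^3(\Om)} \lesssim \Vert F\Vert_{H^1(\Om)} \lesssim \Vert f\Vert_{H^1(\Om)} + \Vert h\Vert_{H^{5/2}(\pr \Om)}$. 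Adding back $\tilde h$ gives the claimed bound for $u$.
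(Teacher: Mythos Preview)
Your approach is close to the paper's but adds an approximation step. The paper proceeds more directly: after the same reduction to zero boundary data, it simply applies Corollary \ref{second2241534} to the solution $u$ (not to approximants), and the bulk of the work goes into controlling the lower-order discrepancy between the full Laplace--Beltrami operator and its principal part $g^{ij}\pr_i\pr_j$. Concretely, the paper bounds $\Vert \pr g\, \pr u\Vert_{H^1(\Om)} \lesssim \Vert g\Vert_{\HH^2}\Vert \pr u\Vert_{C^0(\ol\Om)}$ and then invokes Proposition \ref{control9993434} to get $\Vert u\Vert_{W^{2,6}} \lesssim \Vert \tilde f\Vert_{L^6}$, which via $W^{1,6}\hookrightarrow C^0$ in $n=3$ closes the estimate. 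Your observation that Corollary \ref{second2241534} is only an a priori estimate is well taken --- the paper is indeed somewhat cavalier here --- and your approximation argument is a clean way to make the existence in $H^3$ rigorous.

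There is, however, a small gap in your write-up: Corollary \ref{second2241534} is stated under the hypothesis that $\triangle_g = g^{ij}\pr_i\pr_j$ (i.e.\ harmonic-coordinate form), so it does \emph{not} directly give $\Vert v_\varep\Vert_{H^3} \lesssim \Vert F_\varep\Vert_{H^1}$ for the full Laplace--Beltrami problem $\triangle_{g_\varep} v_\varep = F_\varep$. You still need to absorb the first-order term $\Vert \pr g_\varep\, \pr v_\varep\Vert_{H^1}$, and this is exactly where the paper's use of Proposition \ref{control9993434} (to obtain $\pr v_\varep \in C^0$ uniformly) enters. Once you insert that step into your approximation scheme, everything goes through. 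Also, for the approximation you need $F\in H^1$, not just $L^6$; this holds since $\Vert \triangle_g \tilde h\Vert_{H^1} \lesssim \Vert g\Vert_{\HH^2}\Vert \tilde h\Vert_{H^3}$ by product estimates, but you should state it.
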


\begin{proof} First, we reduce to homogeneous Dirichlet data. By Paragraph 7.41 in \cite{Adams}, there exists $\tilde{h} \in H^3(\Om)$, such that 
\begin{align*}
\tau(\tilde h) = h \text{ on } \pr \Om
\end{align*}
and
\begin{align*}
\Vert \tilde h \Vert_{H^3(\Om)} \lesssim \Vert h \Vert_{H^{5/2}(\pr \Om)}.
\end{align*}

Therefore we can reduce \eqref{tildepsidef1255443} to the study of
\begin{align} \begin{aligned}
\triangle_g u &= \tilde{f} \text{ in }  \Om,\\
u &= 0 \text{ on } \pr  \Om.
\end{aligned} \label{newnewnew}\end{align}
where $\tilde{f} := f + \triangle_g \tilde{h}$ is bounded by
\begin{align} \begin{aligned}
\Vert \tilde{f} \Vert_{H^1(\Om)} \lesssim& \Vert f \Vert_{H^1(\Om)} + \Vert \triangle_g \tilde{h} \Vert_{H^1(\Om)} \\
\lesssim& \Vert f \Vert_{H^1(\Om)} + \Vert g \pr^2 \tilde{h} \Vert_{H^1(\Om)} + \Vert \pr g \pr \tilde{h} \Vert_{H^1(\Om)}\\
\lesssim& \Vert f \Vert_{H^1(\Om)} + \Vert g \Vert_{\HH^2(\Om)}  \Vert \tilde{h} \Vert_{H^3(\Om)} .
\end{aligned} \label{eq923434Gjefef} \end{align}

By the Lax-Milgram theorem there exists a unique solution $u$ to \eqref{newnewnew} satisfying
\begin{align*}
\Vert u \Vert_{H^1(\Om)} \lesssim& \Vert \tilde{ f} \Vert_{L^2(\Om)}.
\end{align*}
In particular, by Lemma \ref{sobolevdomainlemma},
\begin{align*}
\Vert u \Vert_{L^6(\Om)} \lesssim \Vert u \Vert_{H^1(\Om)} \lesssim \Vert \tilde{f} \Vert_{L^2(\Om)}.
\end{align*}
Applying Corollary \ref{second2241534} and Proposition \ref{control9993434} to \eqref{newnewnew}, we get
\begin{align*}
\Vert u \Vert_{H^3(\Om)} \lesssim& \Vert g^{ij} \pr_i \pr_j u \Vert_{H^1(\Om)} + \Vert u \Vert_{H^{5/2}(\pr \Om)} \\
\lesssim& \Vert \triangle_g u \Vert_{H^1(\Om)} + \Vert \pr g \pr u \Vert_{H^1(\Om)} \\
\lesssim &  \Vert \tilde{f} \Vert_{H^1(\Om)} + \Vert g \Vert_{\HH^2(\Om)} \Vert \pr u \Vert_{C^0(\overline{\Om})} \\
\lesssim &  \Vert \tilde{f} \Vert_{H^1(\Om)} + \Vert g \Vert_{\HH^2(\Om)} \Vert u \Vert_{W^{1,6}(\Om)}  \\
\lesssim &  \Vert \tilde{f} \Vert_{H^1(\Om)} + \Vert g \Vert_{\HH^2(\Om)}  \Vert \tilde{f} \Vert_{L^6(\Om)} \\
\lesssim & \Vert \tilde{f} \Vert_{H^1(\Om)}, \\
\lesssim & \Vert f \Vert_{H^1(\Om)} + \Vert h \Vert_{H^{5/2}(\pr \Om)},
\end{align*}
where we used Lemma \ref{sobolevdomainlemma} and \eqref{eq923434Gjefef}. This finishes the proof of Corollary \ref{prop:ellest3}. \end{proof}

\subsection{Global estimates for Neumann data} \label{secNeuEll}

In this section we collect standard global elliptic estimates for given Neumann boundary data. These estimates are applied in Section \ref{sec:harmregularitymflds} to the metric components $g^{33}$ and $g^{3A}$ in boundary harmonic coordinates.

We recall the standard elliptic estimates in a smooth domain, see for example (2.3.3.1) in \cite{Grisvard}.
\begin{theorem} \label{thmoriginalgrisvard}
Let $1<p<\infty$ be a real and $\Om$ be a bounded smooth domain. Let $g \in C^{0,1}(\ol{\Om})$ be a Riemannian metric on $\ol{\Om}$. Let further $B(u):= b^i \pr_i u$ be a boundary operator with $b^i$ Lipschitz on $\pr \Om$ and $g(b,N)\geq c$ for some $c>0$. Then for any $u \in W^{2,p}(\Om)$, we have
\begin{align*}
\Vert u \Vert_{W^{2,p}(\Om)} \lesssim \Vert \triangle_g u \Vert_{L^{p}(\Om)} + \Vert B(u) \Vert_{W^{1-1/p,p}(\pr \Om)}.
\end{align*}
\end{theorem}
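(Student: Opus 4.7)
The plan is to prove Theorem \ref{thmoriginalgrisvard} by combining interior Calderón-Zygmund estimates with a boundary regularity argument that reduces the oblique derivative problem on a general smooth domain to the normal derivative problem on a half-space with constant-coefficient Laplacian. First I would fix a finite cover of $\overline{\Om}$ by coordinate patches, distinguishing interior patches contained in $\Om$ from boundary patches meeting $\pr \Om$, and introduce a subordinate partition of unity $\{\chi_k\}$. Writing $u = \sum_k \chi_k u$, it then suffices to estimate each $\chi_k u$, with commutator terms $[\triangle_g, \chi_k] u$ absorbed into an $L^p$ remainder controlled by $\Vert u\Vert_{W^{1,p}(\Om)}$, which by interpolation (and the smallness trick $\Vert u\Vert_{W^{1,p}} \leq \delta \Vert u\Vert_{W^{2,p}} + C_\delta \Vert u \Vert_{L^p}$) can be absorbed into the left-hand side together with harmless lower-order contributions.

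For interior patches the estimate is standard: freezing $g^{ij}$ at a reference point $x_0$, the leading-order operator becomes constant-coefficient elliptic on $\RRR^n$, to which classical Calderón-Zygmund $L^p$ theory applies; the variable-coefficient error $(g^{ij}(x) - g^{ij}(x_0)) \pr_i \pr_j u$ is handled by shrinking the patch using continuity of $g$ (Lipschitz is more than enough to make this term small in $L^p$ operator norm on the patch). For boundary patches, I would flatten $\pr\Om$ by a smooth diffeomorphism onto a piece of $\HHH^+$; in the new coordinates $g$ remains Lipschitz, and the condition $g(b,N) \geq c>0$ is preserved as positivity of the transported vector field against the new outward normal. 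After subtracting an explicit tangential correction (controlled by the trace of the tangential gradient $\Nd u$, hence by $\Vert B(u)\Vert_{W^{1-1/p,p}(\pr\Om)}$ together with lower-order terms), one reduces to the case where the boundary condition is effectively $\pr_n u = h$ on $\{x^n = 0\}$ for a suitable $h$.

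The core analytic input is then the constant-coefficient Neumann problem on $\HHH^+$: for $\triangle u = f$ in $\HHH^+$ with $\pr_n u = h$ on $\{x^n = 0\}$, extending $u$ evenly across the boundary yields a function on $\RRR^n$ whose distributional Laplacian is the even extension of $f$ plus a single-layer term supported on $\{x^n = 0\}$ of size controlled by $h$; whole-space $L^p$ Calderón-Zygmund estimates together with the trace characterization of $W^{1-1/p,p}(\RRR^{n-1})$ as the trace space of $W^{1,p}(\RRR^n)$ then yield $\Vert u\Vert_{W^{2,p}(\HHH^+)} \lesssim \Vert f\Vert_{L^p(\HHH^+)} + \Vert h\Vert_{W^{1-1/p,p}}$. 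Finally, the variable-coefficient metric and Lipschitz boundary operator are reintroduced exactly as in the interior step, by freezing coefficients at a boundary point and absorbing the resulting $o(1)$ error in the patch diameter.

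The main obstacle, and the reason this is a nontrivial result rather than a direct citation of Calderón-Zygmund, is the combination of (i) the oblique derivative boundary condition, which is not simply a normal derivative and requires the coercivity assumption $g(b,N) \geq c > 0$ in order to remain a regular (Lopatinskii) elliptic boundary condition, and (ii) the low $C^{0,1}$ regularity of both $g$ and $b$, which forces the freezing-of-coefficients step to be carried out carefully so that the residual variable-coefficient contributions genuinely vanish as the patch shrinks and can be absorbed into $\Vert u\Vert_{W^{2,p}(\Om)}$ on the left. This is precisely the setting treated in \cite{Grisvard}, to which the present statement is referred.
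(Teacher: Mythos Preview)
The paper does not actually prove this theorem; it simply recalls it as a standard result and cites (2.3.3.1) in \cite{Grisvard}. Your proposal is a correct and reasonably detailed outline of precisely the argument carried out in that reference --- partition of unity, interior Calder\'on--Zygmund estimates via freezing of coefficients, boundary flattening, reduction of the oblique to the normal derivative problem using the Lopatinskii condition $g(b,N)\geq c>0$, and absorption of variable-coefficient errors by shrinking patches --- so your approach and the paper's (implicit) approach coincide.
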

In the special case of boundary harmonic coordinates and Neumann boundary data, the proof of Theorem \ref{thmoriginalgrisvard} generalises to the following result.
\begin{theorem}[Global elliptic estimates for the Neumann problem, $n=3$] \label{thm:grisvard1low}
Let $3<p< \infty$ be a real and let $\Om \subset  \RRR^3$ be a bounded smooth domain. Let $g \in \WW^{1,p}(\Om)$ be a Riemannian metric on $\Om$ such that $\triangle_g = g^{ij} \pr_i \pr_j$. Then for every $1<p'<\infty$ and $u \in W^{2,p'}(\Om)$,
\begin{align} \label{est58290111}
\Vert u \Vert_{W^{2,p'}(\Om)} \lesssim  \Vert \triangle_g u  \Vert_{L^{p'}(\Om)} +\Vert N(u) \Vert_{W^{1-1/p',p'}(\pr \Om)}.
\end{align}
\end{theorem}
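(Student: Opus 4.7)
The plan is to adapt Grisvard's proof of Theorem \ref{thmoriginalgrisvard} to the weaker coefficient regularity $g \in \mathcal{W}^{1,p}(\Omega)$, $p > 3$, via a standard freezing-of-coefficients argument. The key observation is that by Sobolev embedding (Lemma \ref{sobolevdomainlemma}), $\mathcal{W}^{1,p}(\Omega) \hookrightarrow \mathcal{C}^{0,\alpha}(\overline{\Omega})$ for $\alpha = 1 - 3/p > 0$, and the components $b^i := \nu^i/\sqrt{g(\nu,\nu)}$ of the outward $g$-unit normal $N = b^i \partial_i$ inherit the Hölder regularity of $g|_{\partial\Omega}$ on the smooth boundary, since $\nu$ is smooth. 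In particular both $g^{ij}$ and $b^i$ are uniformly continuous, which is the right replacement for the Lipschitz hypothesis in Theorem \ref{thmoriginalgrisvard}: the classical freezing argument works for merely continuous top-order coefficients.

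\textbf{Localization and freezing.} I would cover $\overline{\Omega}$ by a finite collection of open sets $\{U_k\}$ (some interior, others boundary neighbourhoods), chosen so small that the oscillations of $g^{ij}$ on $U_k$ and of $b^i$ on $U_k \cap \partial\Omega$ are less than a small constant $\delta > 0$ (possible by uniform continuity). Take a subordinate partition of unity $\{\chi_k\}$ and, for each $k$, fix a centre $x_k \in U_k$; consider the constant-coefficient operator $L_k := g^{ij}(x_k)\partial_i\partial_j$ and constant boundary operator $B_k := b^i(x_k)\partial_i$. After an affine change of variables, $L_k$ reduces to the Euclidean Laplacian and $B_k$ to a constant oblique (or normal) derivative, for which Theorem \ref{thmoriginalgrisvard} applies with constants depending only on the uniform ellipticity of $g$, yielding
\begin{equation*}
\Vert \chi_k u \Vert_{W^{2,p'}(U_k)} \lesssim \Vert L_k(\chi_k u)\Vert_{L^{p'}(U_k)} + \Vert B_k(\chi_k u) \Vert_{W^{1-1/p',p'}(U_k \cap \partial\Omega)}.
\end{equation*}

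\textbf{Perturbation, patching, and the main obstacle.} Writing $L_k(\chi_k u) = \triangle_g(\chi_k u) + (g^{ij}(x_k) - g^{ij})\partial_i\partial_j(\chi_k u)$ and similarly for $B_k$, the perturbation contributes the error terms
\begin{equation*}
\Vert (g^{ij} - g^{ij}(x_k))\partial_i\partial_j(\chi_k u) \Vert_{L^{p'}(U_k)} + \Vert (b^i - b^i(x_k))\partial_i(\chi_k u) \Vert_{W^{1-1/p',p'}(U_k \cap \partial\Omega)}.
\end{equation*}
By choosing $\delta$ small, these are absorbed into $\Vert \chi_k u \Vert_{W^{2,p'}(U_k)}$. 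Summing over $k$, expanding $\chi_k$ via Leibniz, and controlling the lower-order commutator terms $\Vert u \Vert_{W^{1,p'}(\Omega)}$ by Rellich-Kondrachov compactness and interpolation against $\Vert u \Vert_{W^{2,p'}}$ then yields the global estimate. The main obstacle is the boundary perturbation: controlling $(b^i - b^i(x_k))\partial_i u$ in $W^{1-1/p',p'}(U_k \cap \partial\Omega)$ requires a multiplier estimate in a fractional Sobolev space on the 2-dimensional boundary, which in turn demands that the Hölder exponent $1 - 3/p$ of $b^i$ be compatible with the fractional smoothness $1 - 1/p'$ of the target space. This step is handled by using the refined trace regularity $g|_{\partial\Omega} \in \mathcal{W}^{1-1/p,p}(\partial\Omega)$ from Lemma \ref{sobolevtracedomain}, which is strictly stronger than the Hölder embedding and suffices for the multiplier estimate in the range of $p'$ used in the applications (in particular $p'=2$ invoked in Section \ref{sec:harmregularitymflds}).
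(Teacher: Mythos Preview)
Your proposal is correct and follows essentially the same route as the paper: both observe that $g\in\WW^{1,p}(\Omega)\hookrightarrow\CC^{0,\alpha}(\overline{\Omega})$ for $p>3$, and then argue that Grisvard's proof of Theorem~\ref{thmoriginalgrisvard} (constant-coefficient model problem plus a continuity/freezing argument) goes through with Hölder-continuous rather than Lipschitz coefficients in the special case $\triangle_g=g^{ij}\pr_i\pr_j$ and $B=N$. The paper's proof is deliberately terse---it simply points to the relevant estimates (2.3.3.5)--(2.3.3.6) in Grisvard and asserts that $C^{0,\alpha}$ regularity is enough there---whereas you spell out the localization, freezing, and absorption steps explicitly; this is the same argument at a different level of detail.

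One remark: the concern you flag about the boundary multiplier $(b^i-b^i(x_k))\pr_i u$ in $W^{1-1/p',p'}(\pr\Omega)$ for arbitrary $1<p'<\infty$ is a legitimate technical point, and your hedge (``in the range of $p'$ used in the applications'') is honest. The paper does not address this explicitly either; it simply asserts that inspection of Grisvard's argument shows the weaker regularity suffices. In practice the applications in Sections~\ref{sec:ExistenceL2} and~\ref{sec:harmregularitymflds} only invoke the case $p'=2$, for which your trace-regularity argument via Lemma~\ref{sobolevtracedomain} is adequate.
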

\begin{proof} The proof of Theorem \ref{thmoriginalgrisvard} in \cite{Grisvard} uses first an analysis of the Neumann problem for a constant coefficient operator and second a continuity argument. We claim that both the same steps go through in the low regularity setting. Indeed, on the one hand, by Lemma \ref{sobolevdomainlemma} and $p>3$,
\begin{align*}
g^{ij} \in W^{1,p} \subset C^{0,\a}
\end{align*}
for some $\a>0$. Therefore, at each point, $g^{ij} \pr_i \pr_j$ is a well-defined pointwise elliptic operator. 

On the other hand, inspecting the continuity argument in the proof of Theorem \ref{thmoriginalgrisvard} in \cite{Grisvard}, it follows that in the case $\triangle_g = g^{ij} \pr_i \pr_j$ and $B=N$, the regularity $g^{ij} \in C^{0,\a}$ suffices to derive \eqref{est58290111}, see in particular the estimates (2.3.3.5) and (2.3.3.6) in \cite{Grisvard} where the regularity of $g^{ij}$ is used. This finishes the proof of Theorem \ref{thm:grisvard1low}. \end{proof}


\section{Interior elliptic estimates} In this section we derive interior elliptic estimates in fractional regularity and control the Dirichlet problem for the Laplace-Beltrami operator.

We recall the following interior estimates, see Proposition 3.4 in \cite{Maxwell}.
\begin{theorem}[Interior estimates in $H^{3/2}$] \label{fracellestlit}
Let $\Om' \subset \subset \Om \subset \RRR^2$ be two bounded smooth domains, and let $g \in \HH^{3/2}(\Om)$ be a Riemannian metric. Then for all $u \in H^{3/2}(\Om)$,
\begin{align*}
\Vert u \Vert_{H^{3/2}(\Om')} \lesssim \Vert \triangle_g u \Vert_{H^{-1/2}(\Om)} + \Vert u \Vert_{H^{1/2}(\Om)}.
\end{align*}
\end{theorem}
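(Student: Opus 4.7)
The plan is to prove the interior $H^{3/2}$ estimate by a cutoff and perturbation argument, reducing to the constant-coefficient case via freezing of coefficients on sufficiently small balls. The two-dimensional Sobolev embeddings $H^{3/2}(\RRR^2) \hookrightarrow C^0(\RRR^2)$ and $H^{1/2}(\RRR^2) \hookrightarrow L^4(\RRR^2)$ from Lemma \ref{sobolevdomainlemma}, together with the product estimates of Lemma \ref{lemmaMoserEstimates}, are the main analytic inputs. The starting point is the flat-case estimate
$$\Vert v \Vert_{H^{3/2}(\RRR^2)} \lesssim \Vert \triangle v \Vert_{H^{-1/2}(\RRR^2)} + \Vert v \Vert_{H^{1/2}(\RRR^2)}$$
for compactly supported $v$, which follows from the Fourier characterization.

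First, fix intermediate domains $\Om' \subset \subset \Om'' \subset \subset \Om$ and a cutoff $\chi \in C_c^\infty(\Om'')$ with $\chi \equiv 1$ on $\Om'$. Setting $v := \chi u$ extended by zero, decomposing $\triangle v = \chi \triangle u + 2 \nab \chi \cdot \nab u + u \triangle \chi$ together with
$$\triangle u = (e^{ij} - g^{ij}) \pr_i \pr_j u + \triangle_g u + g^{ij} \Ga^k_{ij} \pr_k u$$
splits the task into estimating in $H^{-1/2}(\RRR^2)$: (i) $\chi \triangle_g u$, bounded by $\Vert \triangle_g u \Vert_{H^{-1/2}(\Om)}$; (ii) the cutoff commutator terms, bounded by $\Vert u \Vert_{H^{1/2}(\Om)}$ after an interpolation $H^1(\Om) \subset [H^{1/2}(\Om), H^{3/2}(\Om)]_{1/2}$ combined with Young's inequality to absorb any $H^{3/2}$-dependence; (iii) the Christoffel contribution $\chi g^{ij} \Ga^k_{ij} \pr_k u$, which is an $L^4 \cdot L^4 \subset L^2 \hookrightarrow H^{-1/2}$ product using $H^{1/2} \hookrightarrow L^4$; and (iv) the principal perturbation $\chi(e^{ij} - g^{ij}) \pr_i \pr_j u$.

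For (iv), the identity $\chi \pr_i \pr_j u = \pr_i \pr_j v - \pr_i \chi \pr_j u - \pr_j \chi \pr_i u - u \pr_i \pr_j \chi$ reduces the problem, modulo lower-order commutator terms handled as in (ii), to estimating $(e^{ij} - g^{ij}) \pr_i \pr_j v$. An integration by parts yields
$$(e^{ij} - g^{ij}) \pr_i \pr_j v = \pr_i \bigl[ (e^{ij} - g^{ij}) \pr_j v \bigr] + \pr_i g^{ij} \pr_j v,$$
whose first summand is bounded in $H^{-1/2}$ by Lemma \ref{lemmaMoserEstimates}, namely $\Vert (e - g) \pr v \Vert_{H^{1/2}} \lesssim \Vert g - e \Vert_{H^{3/2}(\Om'')} \Vert v \Vert_{H^{3/2}}$, while the second is again an $L^4 \cdot L^4$ product.

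The main obstacle will be absorbing the resulting term $C \Vert g - e \Vert_{H^{3/2}(\Om'')} \Vert v \Vert_{H^{3/2}(\RRR^2)}$ on the left-hand side, which cannot be done globally. To overcome this, cover $\Om'$ by finitely many small balls $B_\rho(x_\alpha)$, freeze $g^{ij}$ at its center value $g(x_\alpha)^{ij}$, and apply a linear change of variables turning the frozen symbol into the flat Laplacian. The required smallness as $\rho \to 0$ comes from two facts in two dimensions: the embedding $H^{3/2} \hookrightarrow C^0$ gives $\Vert g - g(x_\alpha) \Vert_{L^\infty(B_\rho)} \to 0$, while absolute continuity of the Lebesgue integral gives $\int\limits_{B_\rho} \int\limits_{B_\rho} \vert \pr g(x) - \pr g(y) \vert^2 \vert x-y \vert^{-3} dx dy \to 0$. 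Together these imply $\Vert g - g(x_\alpha) \Vert_{H^{3/2}(B_\rho(x_\alpha))} \to 0$, so $\rho$ may be chosen uniformly small enough for the absorption to close on each ball. A finite subcover of $\Om'$ then yields the asserted interior estimate.
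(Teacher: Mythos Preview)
The paper does not give a proof of this theorem; it is simply recalled from Proposition~3.4 of \cite{Maxwell}. Your proposal supplies a direct argument via cutoff and freezing of coefficients, which is the standard route for low-regularity interior elliptic estimates and is essentially in the spirit of Maxwell's proof. The analytic inputs you identify---the two-dimensional embeddings $H^{3/2}\hookrightarrow C^0$ and $H^{1/2}\hookrightarrow L^4$, together with the vanishing of the oscillation $\Vert g-g(x_\alpha)\Vert_{\HH^{3/2}(B_\rho(x_\alpha))}$ as $\rho\to 0$ via absolute continuity of the Gagliardo integral---are correct and sufficient to close the absorption.

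One clarification is needed for step (iii): your $L^4\cdot L^4\subset L^2\hookrightarrow H^{-1/2}$ bound on the Christoffel contribution $\chi g^{ij}\Gamma^k_{ij}\pr_k u$ produces a factor of $\Vert \pr u\Vert_{L^4}\lesssim\Vert u\Vert_{H^{3/2}}$, not $\Vert u\Vert_{H^{1/2}}$, so this term cannot be treated as lower order globally. It must also be absorbed after localisation to small balls, using $\Vert \pr g\Vert_{\LL^4(B_\rho)}\to 0$ by absolute continuity, on the same footing as the principal perturbation (iv). With that adjustment the argument closes as you describe.
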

Moreover, we have the following interior estimates. They are derived analogously to Proposition \ref{control9993434}, see also Theorem A.3 in \cite{Petersen}.
\begin{proposition}[Interior estimates in $W^{2,4}$] \label{thmA3Petersen}
Let $\Om' \subset \subset \Om \subset \RRR^2$ be two bounded smooth domains. Let $g \in \WW^{1,4}(\Om)$ be a Riemannian metric. Then for all $u \in W^{2,4}(\Om)$,
\begin{align*}
\Vert u \Vert_{W^{2,4}(\Om')} \lesssim \Vert \triangle_g u \Vert_{L^{4}(\Om)} + \Vert u \Vert_{L^{4}(\Om)} .
\end{align*}
\end{proposition}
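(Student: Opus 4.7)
The plan is to follow the $n=3$ argument of Proposition \ref{control9993434}, adapted to dimension two. The strategy has three parts: localize via cutoff, apply the global Dirichlet estimate Theorem \ref{cor:p33est24231} with $p'=4$, and bootstrap to remove the lower-order terms that appear because $g$ is only in $\WW^{1,4}$.

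First I would fix nested smooth domains $\Om' \subset\subset \Om_1 \subset\subset \Om_2 \subset\subset \Om$ and a cutoff $\chi$ with $\chi \equiv 1$ on $\Om_2$ and $\mathrm{supp}\,\chi \subset\subset \Om$. Since $g \in \WW^{1,4}(\Om)$ and $4 > 2 = n$, Theorem \ref{cor:p33est24231} applies to $\chi u$ with zero Dirichlet data and gives
$$\Vert \chi u\Vert_{W^{2,4}(\Om)} \lesssim \Vert g^{ij}\pr_i\pr_j(\chi u)\Vert_{L^{4}(\Om)}.$$
Expanding and using the identity $g^{ij}\pr_i\pr_j u = \triangle_g u - X^j \pr_j u$, where $X^j$ encodes the first-order term from $\pr g$ (which lies in $L^4$ by hypothesis), together with the commutator $[\chi, g^{ij}\pr_i\pr_j]$, the right-hand side is controlled by
$$\Vert u\Vert_{W^{2,4}(\Om')} \lesssim \Vert \triangle_g u\Vert_{L^4(\Om)} + \Vert u\Vert_{L^4(\Om)} + \Vert \pr g\Vert_{L^4(\Om)}\Vert \pr u\Vert_{L^\infty(\Om_2)}.$$
The task thus reduces to a quantitative bound on $\Vert \pr u\Vert_{L^\infty(\Om_2)}$ by the right-hand side of the claimed estimate.

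This is achieved by a bootstrap iterating Theorem \ref{cor:p33est24231} with $p' = 4/3, 2, 3$ on a chain of nested cutoffs, exactly paralleling the argument in Proposition \ref{control9993434}. The starting point is the $L^2$ energy estimate obtained by testing $\triangle_g u$ against $\chi_0^2 u$ and integrating by parts, yielding $\Vert\pr u\Vert_{L^2(\Om_3)} \lesssim \Vert\triangle_g u\Vert_{L^2(\Om)} + \Vert u\Vert_{L^2(\Om)}$ on a suitable intermediate subdomain $\Om_3$. Feeding this into Theorem \ref{cor:p33est24231} with $p'=4/3$ and using the Sobolev embedding $W^{2,4/3}\hookrightarrow W^{1,4}$ in $n=2$ upgrades $\pr u$ to $L^4$. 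A second iteration with $p'=2$ gives $u\in H^2$, and the borderline Sobolev embedding $H^2 \hookrightarrow W^{1,q}$ (valid for every $q<\infty$ when $p=n=2$) places $\pr u$ in any $L^q$. A final iteration with $p'=3$ gives $u\in W^{2,3}$, and since $3>2=n$ the Morrey embedding $W^{2,3}\hookrightarrow C^{1,1/3}$ yields the desired bound on $\Vert \pr u\Vert_{L^\infty(\Om_2)}$ in terms of $\Vert \triangle_g u\Vert_{L^4(\Om)} + \Vert u\Vert_{L^4(\Om)}$ (using that $\Om$ is bounded so $L^4(\Om)\subset L^p(\Om)$ for $p\leq 4$).

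Plugging this $L^\infty$ bound back into the first estimate closes the argument. The main obstacle is the careful bookkeeping of the bootstrap chain and the nested cutoffs rather than any conceptual difficulty; the iteration terminates in finitely many steps because the Sobolev gain in $n=2$ is strictly greater than one derivative at each stage. The low regularity $g\in\WW^{1,4}$ causes no trouble because in $n=2$ this already embeds into $\CC^{0,1/2}$, so the principal part $g^{ij}\pr_i\pr_j$ is a pointwise elliptic operator with continuous coefficients, and Theorem \ref{cor:p33est24231} is applicable throughout the bootstrap.
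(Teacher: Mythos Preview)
Your proposal is correct and takes essentially the same approach the paper indicates: the paper does not spell out the proof but states that it is ``derived analogously to Proposition~\ref{control9993434}, see also Theorem~A.3 in \cite{Petersen}'', and your argument is precisely this adaptation---localizing by cutoff (since the estimate is interior rather than global Dirichlet), applying Theorem~\ref{cor:p33est24231}, and bootstrapping $\partial u$ through the chain $L^2 \to L^4 \to L^q \to L^\infty$ with the $n=2$ Sobolev exponents in place of the $n=3$ ones used in Proposition~\ref{control9993434}.
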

By combining Theorem \ref{fracellestlit} and Proposition \ref{thmA3Petersen}, we have the following stronger interior elliptic estimates for the Laplace-Beltrami operator.
\begin{corollary}[Interior estimates in $H^{5/2}$, $n=2$] \label{propdim2higher}
Let $\Om' \subset \subset \Om \subset \RRR^2$ be two bounded smooth domains. Let $g \in \HH^{3/2}(\Om)$ be a Riemannian metric. Then for all $u \in H^{5/2}(\Om)$, we have
\begin{align} \label{eq393493422}
\Vert u \Vert_{H^{5/2}(\Om')} \lesssim \Vert \triangle_g u \Vert_{H^{1/2}(\Om)} + \Vert u \Vert_{H^{3/2}(\Om)}.
\end{align}
\end{corollary}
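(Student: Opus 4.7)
The plan is to upgrade the two given interior estimates by differentiating the equation once and applying Theorem \ref{fracellestlit} to each first derivative $\partial_k u$; the resulting commutator $[\triangle_g,\partial_k]u$ will be tamed with the auxiliary $W^{2,4}$ estimate from Proposition \ref{thmA3Petersen}. I introduce an intermediate smooth domain $\Om' \subset \subset \Om'' \subset \subset \Om$ so that interior bounds can be chained.

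\textbf{Step 1 ($W^{2,4}$ control on $\Om''$).} I would apply Proposition \ref{thmA3Petersen} on the pair $(\Om'',\Om)$ and then convert the $L^4$ norms on the right-hand side to the desired $H^{1/2}$ and $H^{3/2}$ norms using the embedding $H^{1/2}\hookrightarrow L^4$ in two dimensions from Lemma \ref{sobolevdomainlemma}:
\[
\Vert u \Vert_{W^{2,4}(\Om'')} \lesssim \Vert \triangle_g u \Vert_{L^4(\Om)} + \Vert u \Vert_{L^4(\Om)} \lesssim \Vert \triangle_g u \Vert_{H^{1/2}(\Om)} + \Vert u \Vert_{H^{3/2}(\Om)}.
\]

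\textbf{Step 2 (differentiating and applying the $H^{3/2}$ estimate).} For each $k=1,2$, since $u \in H^{5/2}(\Om)$ we have $\partial_k u \in H^{3/2}(\Om)$, so Theorem \ref{fracellestlit} applies on the pair $(\Om',\Om'')$:
\[
\Vert \partial_k u \Vert_{H^{3/2}(\Om')} \lesssim \Vert \triangle_g(\partial_k u) \Vert_{H^{-1/2}(\Om'')} + \Vert \partial_k u \Vert_{H^{1/2}(\Om'')}.
\]
I would then expand $\triangle_g(\partial_k u) = \partial_k(\triangle_g u) - (\partial_k g^{ij})\,\partial_i\partial_j u$. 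The first piece is directly bounded by $\Vert \triangle_g u\Vert_{H^{1/2}(\Om)}$. For the commutator piece, I would use the embeddings $L^2 \hookrightarrow H^{-1/2}$ and $H^{1/2}\hookrightarrow L^4$ together with Hölder's inequality to estimate
\[
\Vert (\partial g)(\partial^2 u) \Vert_{H^{-1/2}(\Om'')} \lesssim \Vert \partial g \Vert_{L^4(\Om'')} \Vert \partial^2 u \Vert_{L^4(\Om'')} \lesssim \Vert g \Vert_{\HH^{3/2}(\Om)} \Vert u \Vert_{W^{2,4}(\Om'')},
\]
and then insert the Step 1 bound on the $W^{2,4}$ factor. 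Summing over $k=1,2$ and adding $\Vert u\Vert_{L^2(\Om)} \leq \Vert u\Vert_{H^{3/2}(\Om)}$ yields the desired $H^{5/2}(\Om')$ estimate, since control of $\partial u$ in $H^{3/2}(\Om')$ plus $L^2$ control of $u$ captures all components of the $H^{5/2}(\Om')$ norm.

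\textbf{Main obstacle.} The delicate point is that the metric is only in $\HH^{3/2}$, so the commutator $[\triangle_g,\partial_k]u$ contains a term $\partial g \cdot \partial^2 u$ that cannot be controlled purely by duality against $H^{1/2}$. It is essential to pair the factor $\partial g \in L^4$ (available precisely because $\partial g \in H^{1/2}$ and we are in two dimensions) with $\partial^2 u \in L^4$, and the latter is not a priori available from the hypotheses — it must be produced via the $W^{2,4}$ estimate of Proposition \ref{thmA3Petersen}. The handling of the inverse components $g^{ij}$ (via $g$ being uniformly non-degenerate) and the verification that $\partial_k g^{ij} \in L^4$ follow from the product estimates in Lemma \ref{lemmaMoserEstimates} and are routine.
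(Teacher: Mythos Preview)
Your proposal is correct and follows essentially the same strategy as the paper: obtain $W^{2,4}$ control via Proposition \ref{thmA3Petersen} on an intermediate domain, then apply Theorem \ref{fracellestlit} to each $\partial_k u$ and absorb the commutator using the $W^{2,4}$ bound together with $H^{1/2}\hookrightarrow L^4$ in two dimensions. The only minor point is that since the corollary does not assume harmonic coordinates, the full Laplace--Beltrami commutator $[\triangle_g,\partial_k]u$ also contains a schematic $\partial^2 g \cdot \partial u$ term (which the paper records), but this is controlled by the same $W^{2,4}$ estimate and falls under what you flagged as routine.
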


\begin{proof} Let $g \in \HH^{3/2}(\Om)$ and let 
$$\Om' \subset \subset \Om_2 \subset \subset \Om_1 \subset \subset \Om$$
be bounded open subsets with smooth boundary. By Theorem \ref{fracellestlit}, 
\begin{align} \label{eq45020064}
\Vert u \Vert_{H^{3/2}(\Om_1)} \lesssim \Vert \triangle u \Vert_{{H^{-1/2}(\Om)}} + \Vert u \Vert_{H^{1/2}(\Om)}.
\end{align}

By Proposition \ref{thmA3Petersen} and \eqref{eq45020064},
\begin{align} \begin{aligned}
\Vert u \Vert_{W^{2,4}(\Om_2)} &\lesssim \Vert \triangle_g u \Vert_{L^4(\Om_1)} + \Vert u \Vert_{L^4(\Om_1)} \\
&\lesssim \Vert  \triangle_g u \Vert_{H^{1/2}(\Om)} + \Vert u \Vert_{H^{1/2}(\Om)}.
\end{aligned} \label{lemest2} \end{align}

Applying now Theorem \ref{fracellestlit} to $\pr u$ and using \eqref{eq45020064} and \eqref{lemest2}, we get
\begin{align} \begin{aligned}
\Vert \pr u \Vert_{H^{3/2}(\Om')} \lesssim& \Vert \triangle_g \pr u \Vert_{H^{-1/2}(\Om_2)} + \Vert \pr u \Vert_{H^{1/2}(\Om_2)} \\
\lesssim& \Vert \triangle_g u \Vert_{H^{1/2}(\Om_2)} +\Vert [\triangle_g, \pr ] u \Vert_{H^{-1/2}(\Om_2)} +  \Vert u \Vert_{H^{3/2}(\Om_2)}.
\end{aligned} \label{calcellest3} \end{align}

The commutator term equals schematically 
\begin{align*}
[\triangle_g, \pr] u = g^2 \pr^2 g \pr u + g^2 \pr g \pr^2 u.
\end{align*}
By Lemma \ref{sobolevdomainlemma}, the first term of the commutator is bounded by \eqref{lemest2} as 
\begin{align*}
\Vert g^2 \pr^2 g \pr u \Vert_{H^{-1/2}(\Om_2)} \lesssim& \Vert \pr^2 g \pr u \Vert_{H^{-1/2}(\Om_2)} \\
\lesssim& \Vert \pr^2 g \Vert_{\HH^{-1/2}(\Om_2)} \Vert \pr u \Vert_{W^{1,4}(\Om_2)} \\
\lesssim& \Vert g \Vert_{\HH^{3/2}(\Om_2)} \Vert u \Vert_{W^{2,4}(\Om_2)} \\
\lesssim& \Vert g \Vert_{\HH^{3/2}(\Om)} \Big( \Vert \triangle_g u \Vert_{H^{1/2}(\Om)} + \Vert u \Vert_{H^{1/2}(\Om)} \Big).
\end{align*}
Similarly, the second term of the commutator is bounded by
\begin{align*}
\Vert g^2 \pr g \pr^2 u \Vert_{H^{-1/2}(\Om_2)} \lesssim& \Vert \pr g \pr^2 u \Vert_{H^{-1/2}(\Om_2)} \\
\lesssim& \Vert \pr g \Vert_{\HH^{1/2}(\Om_2)} \Vert \pr^2 u \Vert_{W^{1,4}(\Om_2)} \\
\lesssim& \Vert g \Vert_{\HH^{3/2}(\Om)} \Big(\Vert \triangle_g u \Vert_{H^{1/2}(\Om)} + \Vert u \Vert_{H^{1/2}(\Om)} \Big).
\end{align*}
Plugging this into \eqref{calcellest3} and summing over all coordinate derivatives proves \eqref{eq393493422}. This finishes the proof of Proposition \ref{propdim2higher}. \end{proof}


The proof of the next corollary of Proposition \ref{propdim2higher} follows by the Lax-Milgram theorem and is left to the reader.
\begin{corollary}[Interior estimates for the Dirichlet problem in $H^{5/2}$, $n=2$] \label{fullldg6858}
Let $\Om \subset \RRR^2$ be a bounded smooth domain and let $g \in H^{3/2}(\Om)$ be a Riemannian metric on $\Om$. Then for every $f \in H^{1/2}(\Om)$, there exists a unique solution $u$ to
\begin{align*} 
\triangle_g u &= f \text{ in }  \Om,\\
u &= 0\text{ on } \pr  \Om.
\end{align*}
Moreover, for every smooth domain $\Om' \subset\subset \Om$, we have $u \in H^{5/2}(\Om')$ and
\begin{align*}
\Vert u \Vert_{H^{5/2}(\Om')} \lesssim \Vert f \Vert_{H^{1/2}(\Om)}.
\end{align*} 
\end{corollary}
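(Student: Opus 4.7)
The plan is to first produce a weak solution via Lax--Milgram, and then upgrade its regularity by combining the two interior estimates already at our disposal, namely Theorem~\ref{fracellestlit} (an $H^{3/2}$ interior estimate) and Proposition~\ref{propdim2higher} (an $H^{5/2}$ interior estimate). Since $\Om \subset \RRR^2$ is two-dimensional and $3/2 > 2/2$, Lemma~\ref{sobolevdomainlemma} (or rather its general $W^{s,p}$ version) shows that $g \in \HH^{3/2}(\Om) \hookrightarrow \CC^{0,\a}(\ol{\Om})$ for some $\a > 0$, so $g$ is in particular uniformly bounded and uniformly elliptic on $\Om$. The bilinear form
\[
a(u,v) := \int\limits_\Om g^{ij} \pr_i u \, \pr_j v \, \sqrt{\det g} \, dx
\]
is therefore continuous and coercive on $H^1_0(\Om)$. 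Lax--Milgram yields a unique weak solution $u \in H^1_0(\Om)$ of $\triangle_g u = f$ with $u|_{\pr \Om} = 0$, together with the basic bound $\Vert u \Vert_{H^1(\Om)} \lesssim \Vert f \Vert_{H^{-1}(\Om)} \lesssim \Vert f \Vert_{H^{1/2}(\Om)}$.

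For the interior estimate, I would fix an intermediate smooth domain $\Om' \subset\subset \Om_1 \subset\subset \Om$ and apply the two a priori estimates in sequence. First, Theorem~\ref{fracellestlit} with the pair $\Om_1 \subset\subset \Om$ gives
\[
\Vert u \Vert_{H^{3/2}(\Om_1)} \lesssim \Vert \triangle_g u \Vert_{H^{-1/2}(\Om)} + \Vert u \Vert_{H^{1/2}(\Om)} \lesssim \Vert f \Vert_{H^{1/2}(\Om)},
\]
using Lemma~\ref{sobolevdomainlemma} to embed $L^2 \hookrightarrow H^{-1/2}$ and to control the $H^{1/2}$-norm of $u$ by its $H^1$-norm. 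Second, Proposition~\ref{propdim2higher} applied with the pair $\Om' \subset\subset \Om_1$ gives
\[
\Vert u \Vert_{H^{5/2}(\Om')} \lesssim \Vert \triangle_g u \Vert_{H^{1/2}(\Om_1)} + \Vert u \Vert_{H^{3/2}(\Om_1)} \lesssim \Vert f \Vert_{H^{1/2}(\Om)},
\]
which is the desired estimate.

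The one subtlety, which I expect to be the main obstacle, is that Theorem~\ref{fracellestlit} and Proposition~\ref{propdim2higher} are a priori estimates assuming $u \in H^{3/2}$, resp.\ $H^{5/2}$, in the larger domain; the Lax--Milgram solution is only known a priori to lie in $H^1_0(\Om)$. I would handle this by a standard approximation argument: smoothly mollify $g$ and $f$ to obtain $g_\varep \to g$ in $\HH^{3/2}(\Om)$ and $f_\varep \to f$ in $H^{1/2}(\Om)$, and let $u_\varep$ solve the corresponding Dirichlet problem. Since $g_\varep$ is smooth, classical elliptic regularity gives $u_\varep \in C^\infty(\Om)$, so both a priori estimates legitimately apply to $u_\varep$ and yield uniform control of $u_\varep$ in $H^{5/2}(\Om')$. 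Uniqueness from Lax--Milgram together with stability of the weak formulation identifies the weak limit with $u$, and weak lower semi-continuity of the $H^{5/2}(\Om')$-norm transfers the uniform bound to $u$, completing the proof.
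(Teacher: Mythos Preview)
Your proposal is correct and is exactly the argument the paper has in mind: the paper states only that the corollary ``follows by the Lax--Milgram theorem'' from Proposition~\ref{propdim2higher} and leaves the details to the reader. Your fleshing out---Lax--Milgram for existence in $H^1_0$, then Theorem~\ref{fracellestlit} and Proposition~\ref{propdim2higher} in sequence on nested domains, plus the approximation argument to justify applying the a~priori estimates---is the natural and correct way to fill this in.
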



\end{document}